\DeclareSymbolFont{bbold}{U}{bbold}{m}{n}
\DeclareSymbolFontAlphabet{\mathbbold}{bbold}
\definecolor{ItalianApricot}{rgb}{1,0.7,0.5}
\newcommand{\newauthor}[2]{
\definecolor{#1}{rgb}{#2}
\expandafter\newcommand\csname #1\endcsname[1]%
{{\sethlcolor{#1}\hl{#1 says ``##1''}}}}
\theoremstyle{plain}
\newtheorem{thm}{Theorem}[section]
\newtheorem{theorem}[thm]{Theorem}
\newtheorem{prop}[thm]{Proposition}
\newtheorem{proposition}[thm]{Proposition}
\newtheorem{lem}[thm]{Lemma}
\newtheorem{lemma}[thm]{Lemma}
\newtheorem{cor}[thm]{Corollary}
\newtheorem{fact}[thm]{Fact}
\theoremstyle{definition}
\newtheorem{defn}[thm]{Definition}
\newtheorem{definition}[thm]{Definition}
\newtheorem{example}[thm]{Example}
\theoremstyle{remark}
\newtheorem{remark}[thm]{Remark}
\newtheorem{claim}{Claim}[thm]
\numberwithin{equation}{section}
\renewcommand{\epsilon}{\varepsilon}
\renewcommand{\phi}{\varphi}
\newcommand{\seq}[1]{{\left\langle{#1}\right\rangle}}
\newcommand\+[1]{\mathcal{#1}}
\DeclareMathOperator{\dom}{dom}
\newcommand{\R}{\mathbb{R}}
\newcommand{\1}{\mathbbm{1}}
\newcommand{\tth}{{}^{\textup{th}}}
\newcommand{\conc}{\hat{\,\,}}
\newcommand{\andd}{\,\,\,\&\,\,\,}
\newcommand{\converge}{\!\!\downarrow}
\newcommand{\diverge}{\!\!\uparrow}
\renewcommand{\setminus}{\smallsetminus}
\newcommand{\w}{\omega}
\newcommand{\s}{\sigma}
\newcommand{\vphi}{\varphi}
\renewcommand{\le}{\leqslant}
\renewcommand{\ge}{\geqslant}
\renewcommand{\leq}{\leqslant}
\renewcommand{\geq}{\geqslant}
\renewcommand{\preceq}{\preccurlyeq}
\renewcommand{\succeq}{\succcurlyeq}
\renewcommand{\npreceq}{\npreccurlyeq}
\newcommand{\nle}{\nleqslant}
\newcommand{\Tur}{\textup{\scriptsize T}}
\newcommand{\leT}{\le_{\Tur}}
\newcommand{\abs}[1]{\|#1\|}
\newcommand{\symdif}{\!\vartriangle\!}
\newcommand{\cost}{\mathbf{c}}
\newcommand{\cc}{\mathbf{c}}
\newcommand{\version}[2]{#1\langle{#2}\rangle}
\newcommand{\limcost}{\underline{\cost}}
\newcommand{\dcost}{\mathbf{d}}
\newcommand{\dlimcost}{\underline{\dcost}}
\newcommand{\ecost}{\mathbf{e}}
\newcommand{\elimcost}{\underline{\ecost}}
\newcommand{\then}{\rightarrow}
\newcommand{\density}[2]{\varrho (#1 \!\mid\! #2 )}
\newcommand{\norm}[1]{\| \+{#1} \|}
\newcommand{\zeroVector}{\mathbbold{0}}
\newcommand{\oneVector}{\mathbbold{1}}
\newcommand{\emptystring}{{\seq{}}} 
\newcommand{\leb}{\mu}
\DeclareMathOperator{\upto}{\upharpoonright}
\newcommand{\rest}[1]{\upto{#1}}
\newcommand{\DII}{\Delta^0_2}
\newcommand{\NN}{{\mathbb{N}}}
\newcommand{\RR}{{\mathbb{R}}}
\newcommand{\ZZ}{{\mathbb{Z}}}
\newcommand{\sub}{\subseteq}
\newcommand{\ML}{Martin-L{\"o}f}
\newcommand{\Halt}{{\ES'}}
\newcommand{\ES}{\emptyset}
\newcommand{\ria}{\rightarrow}
\newcommand{\tp}[1]{2^{#1}}
\newcommand{\fa}{\forall}
\newcommand{\lep}{\le^+}
\newcommand{\Opcl}[1]{[#1]^\prec}
\newcommand{\Om}{\Omega}
\begin{document}


\title{Computing from projections of random points}

\date{\today}

\author{Noam Greenberg}
\address{School of Mathematics, Statistics and Operations Research\\
Victoria University of Wellington\\
Wellington, New Zealand}
\email{greenberg@msor.vuw.az.nz}

\author{Joseph S.~Miller}
\address{Department of Mathematics\\
University of Wisconsin\\
Madison, WI 53706, USA}
\email{jmiller@math.wisc.edu}

\author{Andr\'e Nies}
\address{Department of Computer Science\\
University of Auckland\\
Private Bag 92019\\
Auckland, New Zealand}
\email{andre@cs.auckland.ac.nz}
 
\thanks{Greenberg and Nies are supported by the Marsden fund of New Zealand. The authors thank the NII Japan for a workshop at the Shonan centre 2014, where this research began. Greenberg was also supported by a Rutherford Discovery Fellowship and by a Turing Research Fellowship ``Mind, Mechanism, Mathematics'' from the Templeton Foundation.}

\subjclass[2010]{Primary 03D32; Secondary 68Q30}

%

\begin{abstract}
We study the sets that are computable from both halves of some (Martin-L\"of) random sequence, which we call \emph{$1/2$-bases}. We show that the collection of such sets forms an ideal in the Turing degrees that is generated by its c.e.\ elements. It is a proper subideal of the $K$-trivial sets. We characterise $1/2$-bases as the sets computable from both halves of Chaitin's $\Omega$, and as the sets that obey the cost function $\mathbf c(x,s) = \sqrt{\Omega_s - \Omega_x}$.

Generalising these results yields a dense hierarchy of subideals in the $K$-trivial degrees: For $k< n$, let $\+B_{k/n}$ be the collection of sets that are below any $k$ out of $n$ columns of some random sequence. As before, this is an ideal generated by its c.e.\ elements and the random sequence in the definition can always be taken to be $\Omega$. Furthermore, the corresponding cost function characterisation reveals that $\+B_{k/n}$ is independent of the particular representation of the rational $k/n$, and that $\+B_p$ is properly contained in $\+B_q$ for rational numbers $p< q$. These results are proved using a generalisation of the Loomis--Whitney inequality, which bounds the measure of an open set in terms of the measures of its projections. The generality allows us to analyse arbitrary families of orthogonal projections. As it turns out, these do not give us new subideals of the $K$-trivial sets; we can calculate from the family which $\+B_p$ it characterises.

We finish by studying the union of $\+B_p$ for $p<1$; we prove that this ideal consists of the sets that are robustly computable from some random sequence. This class was previously studied by Hirschfeldt, Jockusch, Kuyper, and Schupp~\cite{Hirschfeldt.Jockusch.ea:15}, who showed that it is a proper subclass of the $K$-trivial sets. We prove that all such sets are robustly computable from $\Omega$, and that they form a proper subideal of the sets computable from every (weakly) LR-hard random sequence. We also show that the ideal cannot be characterised by a cost function, giving the first such example of a $\Sigma^0_3$ subideal of the $K$-trivial sets.
\end{abstract}


\maketitle

\tableofcontents

\section{Introduction}

If two infinite binary sequences are chosen independently at random, we would expect them not to encode common noncomputable information. This is the case, but Martin-L\"of randomness is not strong enough to guarantee this kind of independence. Martin-L\"of's notion of randomness is the most widely used in the theory of algorithmic randomness, an area that attempts to label individual binary sequences as ``random''. For a randomness notion to make sense it must hold of almost every sequence, and it must restrict the behavior of a sequence so that it has natural properties in common with almost every sequence. We want, for example, random sequences to have an equal number of $0$s and $1$s in the limit. This is a natural property shared by almost every sequence. On the other hand, we do not want to go overboard: an infinite binary sequence $X\in 2^\omega$ always has the unusual propery of being in the singleton set $\{X\}$, a property shared with no other sequence. So if we are to label sequences as ``random'', we must limit ourselves to natural properties. In practice, we must specify a countable collection of measure zero sets that ``cover'' the nonrandom sequences. In the case of Martin-L\"of randomness, we use the ``effective measure zero sets''. These are the sets of sequences for which there is an algorithm that takes as input a rational $\epsilon>0$ and, as output, generates an open cover of the set with measure less than $\epsilon$. 

Martin-L\"of randomness is strong enough to guarantee many of the properties we would want from random sequences. For example, they have an equal number of $0$s and $1$s in the limit, and in fact, satisfy the law of the iterated logarithm; when viewed as real numbers, they are points of differentiability for every computable function of bounded variation (Demuth~\cite{D:75}, see also~\cite{BMN:16}); and they satisfy Birkhoff's ergodic theorem for computable ergodic systems with respect to effectively closed sets~\cite{BDHMS:12,FGMN:12}. On the other hand, things get interesting when we look at properties of typical sequences with respect to information content. We already alluded to an example above: independent Martin-L\"of random sequences can compute the same noncomputable information. The subject of this paper is to understand exactly how complex such shared information can be. A simpler example of Martin-L\"of random sequences having an unusual property was given by Ku\v{c}era~\cite{Kucera:85} and G\'{a}cs~\cite{Gacs:EverySequence}. They showed that every sequence is computable from some random sequence. Even an incomplete random sequence may be Turing above a noncomputable, computably enumerable (c.e.) set (Ku\v{c}era~\cite{Kucera:PriorityFree}). The set of sequences that are Turing above  noncomputable c.e.\ sets has measure zero, but it is not an effective measure zero set.

This latter failure gives rise to a dual question: what kind of c.e.\ sets can be computed by an incomplete random sequence?\footnote{Henceforth in this paper ``random'' means Martin-L\"{o}f random.} The answer to this question is now known~\cite{HirschfeldtNiesStephan:UsingRandomSetsAsOracles,DM:15,CoveringProblemAnnouncement}: these are the \emph{$K$-trivial} c.e.\ sets. The notion of $K$-triviality was introduced by Solovay~\cite{Solovay:manuscript} as the antithesis of randomness: while random sequences can be characterised as those whose initial segments cannot be compressed beyond their length, the $K$-trivial sequences are those whose initial segments are maximally compressible and contain no information beyond their length. The $K$-trivial sets are computationally weak (close to being computable). There are only countably many of them (Chaitin~\cite{Chaitin}); they are all computable from c.e.\ $K$-trivial sets (Nies~\cite{Nies:LownessPropertiesAndRandomness}); and they coincide with  the \emph{low for random} sets: the sets~$A$ such that every ML-random sequence is ML-random relative to~$A$ (Nies~\cite{Nies:LownessPropertiesAndRandomness}). The robustness of $K$-triviality has been demonstrated through a series of characterisations, including several in terms of notions of weakness. One such notion is being a \emph{base for randomness}: $A$ is $K$-trivial if and only if it is computable from some sequence which is random relative to~$A$ (Hirschfeldt et al.~\cite{HirschfeldtNiesStephan:UsingRandomSetsAsOracles}). The class of~$K$-trivial sets induces an ideal in the Turing degrees.

The join $X \oplus Y$ of binary sequences $X,Y$ is the sequence  \begin{center}  $Z= X(0)Y(0)X(1)Y(1) \ldots$ \end{center} that alternates between $X$ and $Y$; we call $X$ and $Y$ the ``halves'' of $Z$. If two sequences~$X$ and~$Y$ are random relative to each other and~$A$ is computable from both~$X$ and~$Y$, then~$A$ is $K$-trivial. Two sequences~$X$ and~$Y$ are random relative to each other if and only if the pair $(X,Y)$ is random if and only if the join $X\oplus Y$ is random. For this reason we say that~$A$ is a  \emph{$1/2$-base} if there are relatively random sequences~$X$ and~$Y$, both of which compute~$A$. Note that if~$X$ and~$Y$ witness that~$A$ is a $1/2$-base, then both~$X$ and~$Y$ are random relative to~$A$. Thus, every $1/2$-base is also a base for randomness and hence~$K$-trivial. However,  not every~$K$-trivial set is a $1/2$-base (Bienvenu et al.~\cite{BGKNT:16}.) This leads to three questions:
\begin{enumerate}
	\item Are there natural witnesses for a set being a $1/2$-base? 
	\item What structure do the $1/2$-base sets induce in the Turing degrees?
	\item In what way can the $1/2$-base sets be characterised? 
\end{enumerate}
We answer these questions in this paper. To answer questions~(1) and~(2) we show:

\begin{theorem} \label{thm:half-base} \
	\begin{enumerate}
		\item Chaitin's halting probability~$\Omega$ is a universal witness for being a $1/2$-base. That is, a set~$A$ is a $1/2$-base if and only if it is computable from both halves  of~$\Omega$. 
		\item The $1/2$-base degrees form a $\Sigma^0_3$ ideal in the Turing degrees which is generated by its c.e.\ elements; the two halves of Chaitin's~$\Omega$ are an exact pair for this ideal. 
	\end{enumerate}
\end{theorem}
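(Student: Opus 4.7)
The plan is to prove both parts at once by establishing that the $1/2$-bases are exactly $\{A : A \leT \Omega_0 \andd A \leT \Omega_1\}$, where $\Omega_0, \Omega_1$ denote the halves of $\Omega$. One containment is immediate: if $A$ is computable from both halves of $\Omega$, then the pair $(\Omega_0, \Omega_1)$ is random (by permutation-invariance of ML-randomness), so $\Omega_0$ and $\Omega_1$ are mutually random witnesses to $A$ being a $1/2$-base.

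The nontrivial containment I would prove by routing through the cost function $\cost(x,s) = \sqrt{\Omega_s - \Omega_x}$, establishing (i) every $1/2$-base obeys $\cost$, and (ii) every set obeying $\cost$ is Turing below each half of $\Omega$. For (i), fix a random pair $(X,Y)$ computing $A$ via $\Phi, \Psi$, and consider the c.e.\ rectangle set $R_{x,s} \sub \cs \times \cs$ of pairs whose joint computation certifies $A_s \uhr x$ at stage $s$. Each $A$-change at $x$ excises a rectangle from $R_{x,s}$; since the random witness $(X,Y)$ survives, the aggregate 2D mass excised above $x$ fits inside an ML test and is bounded by $\Omega_s - \Omega_x$. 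The 2D Loomis--Whitney (rectangle) bound $|R_1 \times R_2| = |R_1| \cdot |R_2|$ converts this into a $\sqrt{\Omega_s - \Omega_x}$ bound on one 1D projection of the excised rectangle, and pigeonholing over the two coordinates yields $\cost$-obedience. For (ii), given an $A$-approximation with $\sum_x \sqrt{\Omega_{s_x} - \Omega_x} < \infty$, I would build an $\Omega_0$-computable sequence of stable guesses for $A$: $\Omega_0$, knowing $\Omega$'s left-c.e.\ approximation, can detect when a 1D projection $\pi_1(\mathcal{U})$ of a c.e.\ open $\mathcal{U} \sub \cs \times \cs$ grows, and the cost bound ensures that the associated test has bounded 1D mass, whose avoidance by $\Omega_0$ certifies the guesses.

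Part (2) then follows essentially formally from the characterisation. The class $\{A : A \leT \Omega_0 \andd A \leT \Omega_1\}$ is the intersection of two principal ideals, hence an ideal, with $(\Omega_0, \Omega_1)$ an exact pair by definition; the condition is $\Sigma^0_3$ because $\Omega_i \in \Delta^0_2$. For c.e.\ generation I apply the general principle that sets obeying a benign cost function are Turing below c.e.\ sets obeying the same cost function, and by (i)--(ii) such c.e.\ sets are themselves $1/2$-bases.

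The main obstacle is (ii): translating a square-root cost bound into a computation from a single half of $\Omega$. The usual linear cost $\Omega_s - \Omega_x$ characterises $K$-triviality via a one-dimensional decanter argument; the square root here reflects the 2D product structure, and leveraging a one-dimensional random like $\Omega_0$ to control a two-dimensional phenomenon is exactly where the generalised Loomis--Whitney inequality advertised in the abstract does its essential work.
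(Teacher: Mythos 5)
Your high-level plan is the same as the paper's: characterise $1/2$-bases both as the sets obeying $\cost_{\Omega,1/2}(x,s)=\sqrt{\Omega_s-\Omega_x}$ and as the sets computable from both halves of~$\Omega$, then read off the ideal facts from Nies's calculus of cost functions. The genuine gap is in your step~(i), that every $1/2$-base obeys this cost function. First, your measure bound points the wrong way and is unjustified: the sets $\Phi^{-1}[A_s\rest{x}]\times\Psi^{-1}[A_s\rest{x}]$ have no a priori relation to $\Omega_s-\Omega_x$ (the functionals know nothing about~$\Omega$), and an \emph{upper} bound on excised $2$D mass or on its projections cannot be used to charge costs; the argument needs a \emph{lower} bound on the wrong mass, so that some projection has measure at least $\sqrt{\epsilon(\Omega_s-\Omega_x)}$. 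The paper manufactures this correspondence: hungry sets $G_\tau$ are filled only up to the goal $\epsilon(\Omega_{|\tau|}-\Omega_{|\tau|-1})$, a string is \emph{confirmed} only when the goal is reached, the approximation of~$A$ is sped up to confirmation stages, and failure of confirmation captures the witness pair by a test. Second, and this is the crux your sketch never addresses, the $\sqrt{\cdot}$-sized projections charged for different changes must be pairwise disjoint in a fixed coordinate in order to sum; projections of the hungry sets overlap (\cref{fig:overlapping_projections}), and repairing this is where the real work lies: counting mass relative to the closed class~$P$ of not-yet-wrong oracles, refilling, replacing the Solovay test by a \emph{difference} test, and then showing that a witness pair for a noncomputable $1/2$-base cannot be captured by a difference test over a product class — which needs the positive-density result \cref{prop:half_bases_and_positive_density_in_product_classes}. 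Moreover, even with all this, full obedience is only obtained for \emph{c.e.}\ $1/2$-bases (\cref{prop:ce_half_bases_fully_obey}); for arbitrary ones the paper can only prove \emph{weak} obedience directly (\cref{prop:general_half_base_weakly_obeys}) and must route through a c.e.\ change set plus downward closure of obedience (\cref{prop:obedience_of_our_cost_functions_is_downwards_closed}, which itself rests on the Barmpalias--Downey-type fact that $K$-trivial oracles cannot speed up the approximation of~$\Omega$). Your proposal asserts full obedience for all $1/2$-bases by a direct argument, which is exactly what is not known how to do.

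Your step~(ii) has the right outline but the mechanism is inverted: the point is not that $\Omega_0$ \emph{avoids} a test, but that each half of~$\Omega$ is \emph{captured} by a $\cost_{\Omega,1/2}$-bounded test (\cref{prop:covering_columns_of_Omega}, via covering a $1/2$-Oberwolfach test and Solovay completeness), and that a set which weakly obeys the cost function is computable from any \emph{$A$-random} sequence so captured (\cref{prop:weak_obedience_and_computing}); the $A$-randomness of the half, i.e.\ the $K$-triviality (lowness for randomness) of~$A$, is an ingredient your sketch omits. Two smaller points: the c.e.-bounding principle from the calculus of cost functions needs no benignity hypothesis, and ``$\Sigma^0_3$ because $\Omega_i\in\Delta^0_2$'' is too quick (a naive quantifier count gives $\Sigma^0_4$); the $\Sigma^0_3$ bound comes from the cost-function characterisation itself.
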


What would constitute an answer for the third question? In general, how can we characterise subclasses of the $K$-trivials in a coherent way? Most of the currently known 15 or so equivalent definitions of $K$-triviality cannot be modified to distinguish between single $K$-trivial sequences. $K$-triviality itself means having the {minimal possible} initial segment complexity; lowness for randomness means not derandomizing {any} random sequence---these are extreme properties, and it is not obvious how they can be adapted to yield subclasses of the $K$-trivial sets. However, there is one characterisation of the~$K$-trivial sets that is amenable to fine-tuning: characterisation by cost functions. 

We will explore cost functions in detail in \cref{sec:cost_functions}. Informally, a cost function $\cost(x,s)$ tells us how expensive it is for a computable approximation~$\seq{A_s}$ of a $\Delta^0_2$ set~$A$ to change on some value $x$ at some stage $s$; a set~$A$ obeys a cost function if the total cost accrued along some approximation is finite. Obeying a cost function tells us that a set has an approximation with ``few'' changes and so it is likely to be computationally weak. Varying the cost function allows us to quantify this notion: roughly, the higher the cost, the fewer the changes permitted and so the closer a set obeying the cost function is to being computable. The~$K$-trivial sets themselves are the sets that obey the cost function $\cost_\Omega(x,s) = \Omega_s-\Omega_x$ (Nies \cite{Nies:CalculusOfCostFunctions}, extending an argument in~\cite{Nies:LownessPropertiesAndRandomness}).

 A sufficiently more demanding cost function, i.e., one that makes changes substantially more expensive, will be obeyed by {some} but not all $K$-trivial sets. This gives us the flexibility to explore behavior within the ideal of $K$-trivial sets. The biggest success along these lines was the characterisation of the \emph{strongly jump traceable} sets (which form a proper subideal of the $K$-trivial sets) using a natural family of cost functions~\cite{GreenbergNies:benign,DGT:InherentEnumerabilityofSJT}.\footnote{Jump traceability offers another tool to distinguish $K$-trivial sets: the slower the growth of the trace bound, the smaller the class of sets that is jump traceable with that bound. A sufficiently slow bound guarantees $K$-triviality. Unfortunately, jump traceability does not seem to be well-suited to our task because it is not even known if $K$-triviality itself can be characterised using jump traceability with computable bounds.} In this paper we answer question~(3) by showing that a set is a $1/2$-base if and only if it obeys the cost function $\cost_{\Omega,1/2}(x,s) = \sqrt{\Omega_s-\Omega_x}$. 

\medskip

The results described so far are a special case. By generalising we will obtain a dense hierarchy of subideals of the $K$-trivial sets.

\begin{definition} \label{def:k/n-base}
	Let $1\le k < n$. A set~$A$ is a \emph{$k/n$-base} if there is a random $n$-tuple $(Z_1,Z_2,\dots, Z_n)$ such that~$A$ is computable from the join of any~$k$ of the sets $Z_1,Z_2,\dots, Z_n$.\footnote{see \cref{sub:capturing_the_columns_of_Omega} below for a discussion of tuples and their joins.}
\end{definition}

For the time being the notation ``$k/n$-base'' should not be taken literally as a fraction (but more akin to the expression $dy/dx$). We will justify this notation by showing that there are subideals of the $K$-trivial sets $\+{B}_p$ indexed by rational numbers $p\in (0,1)$ which respect order ($p<q$ if and only if $\+{B}_p\subset \+{B}_q$) and such that a set is a $k/n$-base if and only if it is in $\+{B}_{k/n}$. We note that a priori there is no reason to believe for example that every $1/2$-base is also a $2/4$-base, but in fact these notions are equivalent. 

\begin{theorem} \label{thm:k_n_base} 
	Let $1\le k < n$. 
	\begin{enumerate}
		\item The $n$-columns of Chaitin's~$\Omega$ (again see \cref{sub:capturing_the_columns_of_Omega}) are a universal witness for being a $k/n$-base: a set~$A$ is a $k/n$-base if and only if it is computable from the join of any~$k$ of the $n$-columns of~$\Omega$. 
		\item A set~$A$ is a $k/n$-base if and only if it obeys the cost function $\cost_{\Omega,k/n}(x,s) = (\Omega_s-\Omega_x)^{k/n}$. The collection $\+{B}_{k/n}$ of the $k/n$-base degrees is a $\Sigma^0_3$ ideal in the Turing degrees which is generated by its c.e.\ elements. 
	\end{enumerate}
\end{theorem}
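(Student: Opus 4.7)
I plan to mirror the argument used for Theorem~\ref{thm:half-base} (the $1/2$-case), replacing the two-dimensional geometry with a general Loomis--Whitney-type inequality for $k$-fold projections. Specifically, I will close the following cycle of implications: (i) if $A$ obeys $\cost_{\Omega,k/n}$ then $A$ is Turing below each $k$-subjoin of the $n$-columns of $\Omega$; (ii) each such $A$ is trivially a $k/n$-base, witnessed by $\Omega$'s own columns (which form a random $n$-tuple); (iii) if $A$ is a $k/n$-base then $A$ obeys $\cost_{\Omega,k/n}$. The first part of the theorem (that $\Omega$ is universal) falls out of (i)$\Rightarrow$(ii)$\Rightarrow$(iii)$\Rightarrow$(i), and the ideal statement then follows from generic facts about cost functions in \cref{sec:cost_functions}.

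For implication (i), I will adapt a Ku\v{c}era-style covering construction: given an approximation $\seq{A_s}$ for which $\sum_x \cost_{\Omega,k/n}(x,s_x)$ is finite, I reserve, for each $A$-change at $x$ by stage $s$, a block of $\Omega$-measure distributed redundantly across the $n$ columns so that any $k$ of them identify it. Achieving redundancy with total $\Omega$-mass comparable to $(\Omega_s-\Omega_x)^{k/n}$ per change is precisely what makes the exponent $k/n$ the right one: each $k$-subjoin only sees an $n/k$-power share of the measure on the full cube, so a ``budget'' of $\epsilon$ on the cube permits $\epsilon^{k/n}$ per $k$-column allotment.

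For implication (iii), the hard direction: assume $A$ is a $k/n$-base via a random tuple $\vvector Z=(Z_1,\dots,Z_n)$ with functionals $\Phi_S$ for each $k$-subset $S\subseteq[n]$ satisfying $\Phi_S^{\vvector Z\uh S}=A$. For each potential change of $A(x)$ between stages $x$ and $s$, I form the set $B_{x,s}\subseteq(2^\omega)^n$ of tuples $\vvector Y$ for which every $\Phi_S^{\vvector Y\uh S}(x)$ has converged by stage $s$ to the ``old'' value $A_x(x)$. For each individual $k$-subset $S$, the $\Omega_s-\Omega_x$ bound on changes of the universal ML-test translates into a measure bound on $\pi_S(B_{x,s})$ roughly proportional to $\Omega_s-\Omega_x$. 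I then invoke a Bollob\'as--Thomason generalization of Loomis--Whitney,
\[
\lambda(E)^{\binom{n-1}{k-1}}\;\le\;\prod_{\substack{S\subseteq[n]\\ |S|=k}}\lambda(\pi_S E),
\]
to conclude $\lambda(B_{x,s})\le(\Omega_s-\Omega_x)^{n/k\cdot \binom{n}{k}/\binom{n}{k}}$ after rearrangement; the effective union $\bigcup_{x,s}B_{x,s}$ is a Solovay test in $(2^\omega)^n$ whose total measure is controlled by $\sum_x(\Omega_s-\Omega_x)^{k/n}$. If this sum diverged along every approximation, $\vvector Z$ would be captured by the test, contradicting randomness of $\vvector Z$; hence $A$ obeys $\cost_{\Omega,k/n}$.

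The $\Sigma^0_3$ ideal structure and generation by c.e.\ elements follow once the cost-function characterization is established: these are general features of \emph{benign} cost functions, and $\cost_{\Omega,k/n}$ is benign because $\cost_\Omega$ is. The main obstacle, as anticipated, is implication (iii): one must verify that the geometry of the $B_{x,s}$ -- built from $\binom{n}{k}$ reductions simultaneously -- really produces projections whose measure feeds into the Loomis--Whitney estimate with the precise exponent $n/k$, and that the measure bookkeeping survives the passage from a single change to the enumeration of all changes. Getting the combinatorics aligned is precisely what forces the exponent $k/n$ in the cost function and explains the equivalence of different fractional representations of $k/n$.
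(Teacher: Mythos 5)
Your cycle (i)$\Rightarrow$(ii)$\Rightarrow$(iii)$\Rightarrow$(i) is the right skeleton, and your step (i) is essentially the paper's route (\cref{prop:covering_columns_of_Omega} shows the $k$-subjoins of the $n$-columns of $\Omega$ are captured by a $\cost_{\Omega,k/n}$-bounded test, and obedience then yields computability). But step (iii), which you correctly identify as the hard one, does not work as sketched. First, the claimed measure bound $\leb(\pi_S(B_{x,s}))\lesssim \Omega_s-\Omega_x$ has no justification: the set of oracles on which $\Phi_S$ outputs the ``old'' value $A_x(x)$ can have measure close to $1$, and nothing about the universal ML-test forces it to shrink like $\Omega_s-\Omega_x$. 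Second, your use of the Bollob\'as--Thomason/Loomis--Whitney inequality runs in the wrong direction. In the paper the measure of the full-dimensional sets is \emph{controlled by the construction} (hungry sets $G_\tau$ are filled only up to a quota proportional to $\Omega_{|\tau|}-\Omega_{|\tau|-1}$), and the inequality is applied in reverse (\cref{lem:geometric}) to conclude that \emph{some projection is large}, namely of measure at least $\leb(V)^{k/n}$; the cost $(\Omega_s-\Omega_x)^{k/n}$ of each change is then charged to such a projection, and finiteness comes from the fact that projections associated with distinct changes are \emph{disjoint}. Deducing an upper bound on $\leb(B_{x,s})$ from upper bounds on projections (even if you had them) gives exponent $n/k$ on the wrong side and feeds nothing into the cost sum; and your closing step is incoherent, since if $\sum(\Omega_s-\Omega_x)^{k/n}$ diverges the family $\{B_{x,s}\}$ is simply not a Solovay test, so no contradiction with randomness of $Z$ is obtained. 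The exponent bookkeeping in your displayed conclusion is also garbled.

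Beyond this, two substantial pieces of machinery that the paper needs are absent from your plan. (a) Even in the c.e.\ case, the projections of the hungry sets cannot be kept disjoint; the paper handles this by refilling sets as mass leaves the left-c.e.\ class $P_s$ and replacing the Solovay test by a \emph{difference test}, whose failure to capture $Z$ rests on positive Lebesgue density of the random tuple in product classes (\cref{lem:density_zero_and_difference_tests}, \cref{prop:half_bases_and_positive_density_in_product_classes}, and in general \cref{lem:density_in_F_product_classes-minimal_witnesses}, which requires choosing the functionals to consult a minimal set of columns). (b) For arbitrary (non-c.e.) $k/n$-bases the paper does not prove full obedience directly: it proves only \emph{weak} obedience (\cref{prop:general_k_n_base}), then passes to a c.e.\ change set which is itself a $k/n$-base, applies the c.e.\ result (\cref{prop:ce_k_n_bases}), and finally transfers obedience downward using \cref{prop:obedience_of_our_cost_functions_is_downwards_closed}, whose proof needs the Barmpalias--Downey lemma that $K$-trivials cannot speed up the approximation of $\Omega$. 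Downward closure of an obedience class is not a generic fact about (benign) cost functions, so your last paragraph's appeal to ``general features'' does not by itself deliver the ideal statement; in the paper the $\Sigma^0_3$ bound, closure under join, and c.e.\ generation come from Nies's calculus of cost functions, while downward closure is proved separately as above.
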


The proof of \cref{thm:k_n_base} relies on the technical notion of weak obedience to cost functions which we introduce in \cref{sec:cost_functions}. We thus delay giving an outline of the proof until the end of that section. We remark that it has already been noticed by Hirschfeldt et al.~\cite{Hirschfeldt.Jockusch.ea:15} that each $k/n$-base is $K$-trivial; if $(Z_1,\dots, Z_n)$ witnesses that~$A$ is a $k/n$-base then for all $j\le n$, by van Lambalgen's theorem, $Z_j$ is random relative to $Z_{\ne j}$ (the join of the other~$Z_i$); the latter computes~$A$ and so $Z_j$ is random relative to $Z_{\ne j}\oplus A$; using van Lambalgen's theorem relative to $A$, we see that the tuple $(Z_1,\dots, Z_n)$ is $A$-random, and so $A$ is a base for randomness. 

\smallskip

The notion of a $k/n$-base is in fact still not the most general one which we can define. Given a collection $\+{F}$ of subsets of $\{1,2,\dots, n\}$ we call a set $A$ an \emph{$\+{F}$-base} if there is some random tuple $(Z_1,Z_2,\dots, Z_n)$ such that~$A$ is computable from the join $\bigoplus_{i\in F} Z_i$ for all $F\in \+{F}$. This notion is of independent interest but will actually be needed in the proof of \cref{thm:k_n_base} for ``degenerate'' $k/n$-bases. We will show that there is a rational number $\abs{\+{F}}\ge 1$ such that a set~$A$ is an $ {\+{F}}$-base if and only if it is in the ideal $\+{B}_{1/\abs{\+{F}}}$, and that in a weak sense, $\Omega$ serves as a universal witness for being an~$\+{F}$-base. 

\smallskip

The sequence of ideals $\seq {\+{B}_p}$ naturally defines two ideals: $\+{B}_{<1} = \bigcup_{p}\+{B}_p$, and $\+{B}_{>0} = \bigcap_{p}\+{B}_p$ (where again in both the union and the intersection, $p$ ranges over the rational numbers in the open interval $(0,1)$). Both ideals have interesting properties.

\begin{theorem} \label{thm:omega_base}
	$\+{B}_{>0}$ is the ideal of sets which are \emph{$1/\w$-bases}: the sets which are computable from each~$Z_n$ in an infinite random sequence $(Z_1,Z_2,\dots)$.\footnote{Equivalently, we can just require that each~$Z_n$ is random relative to the join of any finite collection of other $Z_k$'s}
\end{theorem}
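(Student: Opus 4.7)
The plan is to prove the two inclusions separately. For the easy direction, suppose $(Z_n)_{n\in \omega}$ witnesses that $A$ is a $1/\omega$-base. For each $n\geq 2$ the tuple $(Z_1,\dots ,Z_n)$ is random and each $Z_i$ computes $A$, so $A$ is a $1/n$-base, giving $A\in \+B_{1/n}$. For any rational $p\in(0,1)$ I would pick $n$ with $1/n\leq p$; by the monotonicity of the hierarchy established in \cref{thm:k_n_base}, $\+B_{1/n}\subseteq \+B_p$, so $A\in \+B_p$. Since $p$ was arbitrary, $A\in \+B_{>0}$.

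For the converse, assume $A\in \+B_{>0}$; I would construct an $\omega$-random tuple $(Z_m)_{m\in \omega}$ with $A\leq_T Z_m$ for every $m$. The main idea is to use the universal witness $\Omega$ together with a computable bijection $\pi\colon \omega\times \omega\to \omega$ chosen so that each set $\{\pi(m,i):i\in \omega\}$ is simultaneously a column of some \emph{finite} column decomposition of $\Omega$, so that \cref{thm:k_n_base}(1) can be applied to each $Z_m$ individually. Concretely I would take $\pi(m,i)=2^m(2i+1)-1$: this is a computable bijection, and for each $m$ the set $\{\pi(m,i):i\in \omega\}$ equals the residue class $\{k\in\omega:k\equiv 2^m-1 \pmod{2^{m+1}}\}$. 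Setting $Z_m(i):=\Omega(\pi(m,i))$ then makes $Z_m$ the $(2^m-1)\tth$ column of the $2^{m+1}$-column decomposition of $\Omega$.

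With this set-up, the two things to verify should be immediate. Since $A\in \+B_{>0}\subseteq \+B_{1/2^{m+1}}$, \cref{thm:k_n_base}(1) yields that $A$ is computable from every column of the $2^{m+1}$-column decomposition of $\Omega$, so in particular $A\leq_T Z_m$. For the randomness of the tuple, fix $n$ and a finite $F\subseteq \omega\setminus\{n\}$: because $\pi$ is a computable bijection, $Z_n$ and $\bigoplus_{k\in F}Z_k$ are restrictions of $\Omega$ to disjoint computable subsets of $\omega$, and together with the restriction to the remaining positions they recover $\Omega$ up to a computable rearrangement of coordinates. Van Lambalgen's theorem applied to $\Omega$ then gives that $Z_n$ is random relative to $\bigoplus_{k\in F}Z_k$, which is exactly the condition in the footnote.

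The delicate point is the choice of pairing: under the standard Cantor pairing, for instance, no $\omega$-column of $\Omega$ would be a column of any finite decomposition, and the appeal to \cref{thm:k_n_base}(1) to deduce $A\leq_T Z_m$ would break down. With the pairing above this obstacle is sidestepped, and the theorem reduces to the universal witness property already established in \cref{thm:k_n_base}(1).
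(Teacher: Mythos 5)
Your proposal is correct and is essentially the paper's own argument: the easy direction via finite subtuples is identical, and your pairing $\pi(m,i)=2^m(2i+1)-1$ produces exactly the paper's dyadic decomposition $\Omega=\bar\Omega_1\oplus(\bar\Omega_2\oplus(\bar\Omega_3\oplus\cdots))$, with each part a column of a $2^{m+1}$-column split so that the universal-witness result (\cref{thm:k_n_base}) applies, just as in the paper. Your closing remark about density-zero columns under a standard pairing matches the caveat the paper itself makes.
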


The ideal $\+{B}_{<1}$ is related to coarse computability. A \emph{coarse description} of a set~$A\in 2^\w$ is a set~$B\in 2^\w$ such that the density of the symmetric difference $A\symdif B$ is 0. Say that a set~$A$ is \emph{robustly computable} from a set~$Z$ if~$A$ is computable from every coarse description of~$Z$. This notion has been investigated by Hirschfeldt et al.~\cite{Hirschfeldt.Jockusch.ea:15}, where they show that if~$A$ is robustly computable from a random set then it is $K$-trivial, in fact it is an $(n-1)/n$-base for some~$n$. They also prove that not every~$K$-trivial set is robustly computable from a random set. We show:

\begin{theorem} \label{thm:Kuyper_class}
	The following are equivalent for a set~$A$:
	\begin{enumerate}
		\item $A\in \+{B}_{<1}$ (that is, $A$~is an $(n-1)/n$-base for some~$n$). 
		\item $A$ is robustly computable from some random sequence. 
		\item $A$ is robustly computable from~$\Omega$. 
		\item There is some $\epsilon>0$ such that~$A$ is computable from all sets~$B$ such that the upper density of $B\symdif \Omega$ is below~$\epsilon$. 
	\end{enumerate}
\end{theorem}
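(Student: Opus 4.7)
The plan is to establish the cycle $(4)\Rightarrow(3)\Rightarrow(2)\Rightarrow(1)\Rightarrow(4)$. The first two implications are immediate from the definitions: any coarse description $B$ of $\Omega$ has $B\symdif\Omega$ of density zero and hence of upper density below every fixed $\epsilon>0$, giving $(4)\Rightarrow(3)$; and since $\Omega$ itself is random, $(3)\Rightarrow(2)$. The implication $(2)\Rightarrow(1)$ is the cited theorem of Hirschfeldt, Jockusch, Kuyper and Schupp~\cite{Hirschfeldt.Jockusch.ea:15}, so there is nothing new to do there.

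The substantive step is $(1)\Rightarrow(4)$. Suppose $A$ is an $(n-1)/n$-base. By \cref{thm:k_n_base}, $A$ obeys the cost function $\cost_{\Omega,(n-1)/n}(x,s) = (\Omega_s-\Omega_x)^{(n-1)/n}$. My plan is to produce some $\epsilon>0$ (of size roughly $1/n$) such that any $B$ with upper density of $B\symdif\Omega$ below $\epsilon$ computes $A$, via a single Turing functional that builds a modulus from $B$ directly. Given such a $B$, and fixing an appropriate computable length function $\ell(x)$ growing with $x$ together with a constant $\alpha$, let $m_B(x)$ be the least stage $s$ at which $\Omega_s\uhr\ell(x)$ and $B\uhr\ell(x)$ disagree on fewer than $\alpha\epsilon\ell(x)$ positions. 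The upper density assumption on $B\symdif\Omega$ and the fact that $\Omega_s\to\Omega$ pointwise guarantee that $m_B(x)$ is defined for all sufficiently large $x$, and $m_B$ is computable from $B$.

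The core estimate is then $\sum_x \cost_{\Omega,(n-1)/n}(x,m_B(x)) < \infty$: once this is established, obedience of $A$ to $\cost_{\Omega,(n-1)/n}$ forces $A_{m_B(x)}(x)=A(x)$ for all but finitely many $x$, giving $A\leT B$. The sublinear exponent $(n-1)/n < 1$ is essential: it converts a proportional Hamming bound between $\Omega_{m_B(x)}$ and $\Omega$ into a summable tail, whereas the exponent $1$ (corresponding to full $K$-triviality) would not suffice.

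The main obstacle I foresee is the passage from a Hamming-type bound ``$\Omega_{m_B(x)}$ and $\Omega$ agree on all but $O(\epsilon\ell(x))$ of the first $\ell(x)$ positions'' to a usable estimate on the real-valued tail $\Omega-\Omega_{m_B(x)}$; in the worst case a few low-order disagreements could inflate the tail well beyond what the count alone suggests. The argument needs to exploit the left-c.e.\ structure of $\Omega$---disagreements that survive at stage $m_B(x)$ correspond to positions where the left-c.e.\ approximation has not stabilized, and the combined weight of such positions is controlled by $\Omega-\Omega_{m_B(x)}$ itself, which must therefore satisfy a self-consistency bound---probably refining $\alpha$ and $\ell(x)$ to absorb the worst cases. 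The dependence of $\epsilon$ on $n$ is exactly what prevents a uniform $\epsilon$ from working for all of $\+{B}_{<1}$ and keeps $(4)$ a claim about each individual $(n-1)/n$-base.
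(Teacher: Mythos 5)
Your treatment of $(4)\Rightarrow(3)\Rightarrow(2)$ and the citation for $(2)\Rightarrow(1)$ match the paper. The problem is the substantive step $(1)\Rightarrow(4)$, where your argument has a genuine gap at its central inference. You propose to compute from $B$ a stage function $m_B(x)$ and then argue: ``once $\sum_x \cost_{\Omega,p}(x,m_B(x))<\infty$ is established, obedience of $A$ to $\cost_{\Omega,p}$ forces $A_{m_B(x)}(x)=A(x)$ for all but finitely many $x$.'' That implication is false as a general principle. First, the quantity points the wrong way: $\cost_{\Omega,p}(x,m_B(x))=(\Omega_{m_B(x)}-\Omega_x)^p$ is \emph{small} exactly when little of $\Omega$ is enumerated between $x$ and $m_B(x)$, i.e.\ when $m_B(x)$ is an \emph{early} stage, which gives no settling information whatsoever (taking $m(x)=x$ makes the sum $0$, yet $A_x(x)=A(x)$ a.e.\ would force $A$ to be computable). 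Second, even for a late stage function, finiteness of the total cost of an approximation does not bound the sum over $x$ of the costs of changes occurring after $m(x)$: at a stage where several positions change, only the least one is charged, and a change of $A(x)$ just after $m(x)$ costs only marginally more than $\cost(x,m(x))$, so infinitely many such late changes are compatible with a finite total cost. Nothing in your sketch rules this out, because you never use the randomness of $\Omega$, which is exactly the missing ingredient.

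The paper's proof of $(1)\Rightarrow(4)$ supplies that ingredient as follows. One first reduces to $A$ c.e.\ (every $p$-base is Turing below a c.e.\ $p$-base), fixes $\delta$ with $H(2\delta)<1-p$ so that Hamming balls satisfy $\leb\bigl(\bigcup\{[\rho]:\rho\in B(\tau,2\delta)\}\bigr)\le 2^{-p|\tau|}$, and defines a functional $\Gamma$ which, on oracle $Z$ with $d(Z,\Omega)<\delta$, outputs $A_s(x)$ at the first stage $s$ where $\Omega_s$ looks $\delta$-close to $Z$ on the relevant lengths. The certification that $\Gamma(Z)=A$ is not a summability estimate about the oracle-given stages; rather, for each $x$ that enters $A$ at stage $t$ one puts into a Solovay test the Hamming-ball neighbourhoods $G_{\Omega_s\rest k}$ with $2^{-k}\approx\Omega_t-\Omega_x$, so that the measure added is $\le^\times(\Omega_t-\Omega_x)^p$, i.e.\ bounded by the $\cost_{\Omega,p}$-cost actually paid; obedience makes this a Solovay test, and since $\Omega$ is random it escapes the test, which shows that $\Gamma(Z,x)\ne A(x)$ can happen only finitely often (any error would place $\Omega\rest k$ within Hamming distance $2\delta$ of $\Omega_{s}\rest k$ and hence $\Omega$ inside the test). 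If you want to salvage your modulus-style plan, you must replace the invalid ``finite cost at $(x,m_B(x))$ implies settling'' step by some such test-plus-randomness argument; the entropy bound $H(2\delta)<1-p$ is also needed to tie the Hamming-ball measures to the cost, and it is where the hypothesis $p<1$ (rather than mere $K$-triviality) genuinely enters.
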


We remark that the result mentioned above (that not every $K$-trivial set is robustly computable from a random sequence) now follows from our characterisation using cost functions; see \cref{sec:robust}. 

\smallskip

Finally we show that every set in the ideal $\+{B}_{<1}$ is computable from all LR-hard random sequences. The notion of LR-hardness (equivalent to almost everywhere domination) appears in the investigations into relative computability between random and c.e.\ sets. If~$Z$ is random but is not LR-hard then it is Oberwolfach random~\cite{BGKNT:16} and so does not compute the ``smart'' $K$-trivial sets~\cite{BGKNT:16}. It is open whether this is in fact an equivalence; it is possible that every~$K$-trivial set is computable from all LR-hard random sequences. In \cref{sec:aed}, we show that the collection of $K$-trivial sets computable from all LR-hard random sequences properly contains the ideal~$\+{B}_{<1}$.

\smallskip

We summarise our findings in \cref{fig:subclasses}.
 
\begin{figure}[ht]
	\begin{center}
		
\begin{tikzpicture}

\def\Angle{57}

\def\SJTHeight{1.15}
\def\OneOmegaHeight{2.5}
\def\KuyperHeight{5.8}
\def\LRHeight{6.9}
\def\TopHeight{7.6}

\foreach \i/\a in {0/\Angle,1/180-\Angle}
	\path [name path=Border\i] (0,0) -- (\a:10);

\path [name path=SJTLine] (-5,\SJTHeight) -- (5,\SJTHeight);
\foreach \i in {0,1}
	\path [name intersections={of={Border\i} and SJTLine, by={SJT\i}}];
\draw (SJT0) -- (SJT1);

\path [name path=OneOmegaLine] (-5,\OneOmegaHeight) -- (5,\OneOmegaHeight);
\foreach \i in {0,1}
	\path [name intersections={of={Border\i} and OneOmegaLine, by={OneOmega\i}}];
\draw (OneOmega0) -- (OneOmega1);

\path [name path=KuyperLine] (-5,\KuyperHeight) -- (5,\KuyperHeight);
\foreach \i in {0,1}
	\path [name intersections={of={Border\i} and KuyperLine, by={Kuyper\i}}];
\draw (Kuyper0) -- (Kuyper1);
\shadedraw (OneOmega0) -- (OneOmega1) -- (Kuyper1) -- (Kuyper0) -- cycle;

\path [name path=LRLine] (-5,\LRHeight) -- (5,\LRHeight);
\foreach \i in {0,1}
	\path [name intersections={of={Border\i} and LRLine, by={LR\i}}];
\draw (LR0) -- (LR1);

\path [name path=TopLine] (-5,\TopHeight) -- (5,\TopHeight);
\foreach \i in {0,1}
	\path [name intersections={of={Border\i} and TopLine, by={Top\i}}];
\draw (0,0) -- (Top0) -- (Top1) -- cycle;

\clip (Top0) -- (Top1) -- (-5,0) -- (5,0) -- cycle;

\begin{scope}
\clip (Top0) -- (Top1) -- (LR1) -- (LR0) -- cycle;
	\foreach \x in {0,...,60}
		\foreach \y in {-1,...,5}
			\node [gray!50] at ($(Top1) + 0.2*(\x,-\y+0.6)$) {\tiny ?};
\end{scope}

\node at (0,\SJTHeight*0.68) {$\textup{SJT}$};
\node at (0,0.5*\KuyperHeight + 0.5*\LRHeight) {$\le_\Tur$ every LR-hard random};
\node at (0,0.5*\LRHeight + 0.5*\TopHeight) {$K$-trivial};

\path [name path=BaseLine] (-4,0) -- (4,0);

\path [name path = OneOmegaDrop] (OneOmega0) -- ($(OneOmega0) - (0,10)$);
\path [name intersections={of=BaseLine and OneOmegaDrop, by={OneOmegaBase}}];
\draw [decorate,decoration={brace,amplitude=10pt,raise=3pt}]
(OneOmega0) -- (OneOmegaBase) node [black,midway,xshift=0.6cm,rotate=-90] {\footnotesize
$1/\omega$-base};

\path [name path = KuyperDrop] (Kuyper0) -- ($(Kuyper0) - (0,10)$);
\path [name intersections={of=BaseLine and KuyperDrop, by={KuyperBase}}];
\draw [decorate,decoration={brace,amplitude=10pt,raise=3pt}]
(Kuyper0) -- (KuyperBase) node [black,midway,xshift=0.6cm,rotate=-90] {\footnotesize
Robustly computable from a random set};

\coordinate (HalfPoint) at ($ (Kuyper1)!0.5!(OneOmega1) $);
\path [name path = HalfDrop] (HalfPoint) -- ($(HalfPoint) - (0,10)$);
\path [name intersections={of=BaseLine and HalfDrop, by={HalfBase}}];
\draw [decorate,decoration={brace,amplitude=10pt,raise=0.6cm,mirror}]
(HalfPoint) -- (HalfBase) node [black,midway,xshift=-1.1cm,rotate=90] {\footnotesize
$1/2$-base};
\draw ($ (HalfPoint) - (0.05,0) $) -- ($ (HalfPoint) - (0.6,0) $);

\coordinate (ThirdPoint) at ($ (Kuyper1)!0.66667!(OneOmega1) $);
\path [name path = ThirdDrop] (ThirdPoint) -- ($(ThirdPoint) - (0,10)$);
\path [name intersections={of=BaseLine and ThirdDrop, by={ThirdBase}}];
\draw [decorate,decoration={brace,amplitude=10pt,raise=3pt,mirror}]
(ThirdPoint) -- (ThirdBase) node [black,midway,xshift=-0.6cm,rotate=90] {\footnotesize
$1/3$-base};

\coordinate (TwoThirdsPoint) at ($ (Kuyper1)!0.33333334!(OneOmega1) $);
\path [name path = TwoThirdsDrop] (TwoThirdsPoint) -- ($(TwoThirdsPoint) - (0,10)$);
\path [name intersections={of=BaseLine and TwoThirdsDrop, by={TwoThirdsBase}}];
\draw [decorate,decoration={brace,amplitude=10pt,raise=1.2cm,mirror}]
(TwoThirdsPoint) -- (TwoThirdsBase) node [black,midway,xshift=-1.7cm,rotate=90] {\footnotesize
$2/3$-base};
\draw ($ (TwoThirdsPoint) - (0.05,0) $) -- ($ (TwoThirdsPoint) - (1.2,0) $);

\end{tikzpicture}

	\end{center}
	\caption{Subclasses of the $K$-trivial degrees.}
	\label{fig:subclasses}
\end{figure}
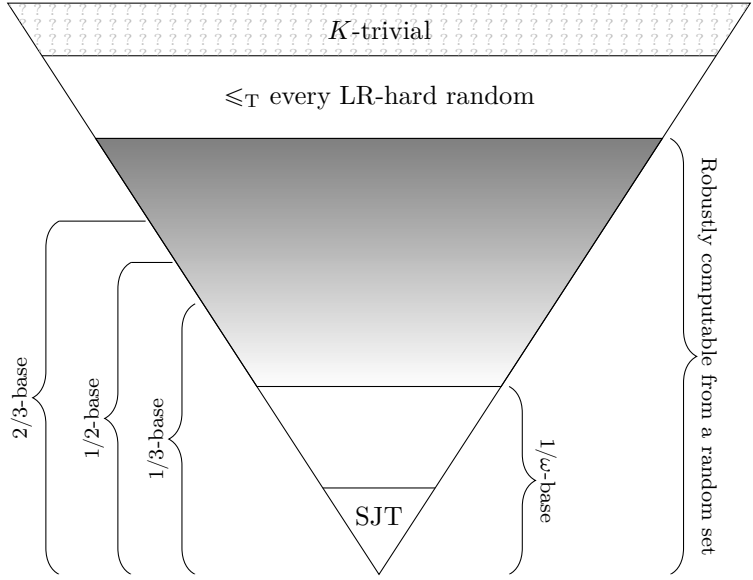








\section{Cost functions and the corresponding test notions}
\label{sec:cost_functions}

Somewhat extending \cite[Section 5.3]{Nies:book}, a \emph{cost function} is a computable function
 \[ \cost\colon \NN \times \NN \ria \{x \in \RR \,:\, x \ge 0\}.
\footnote{The definition of cost functions in \cite[Section 5.3]{Nies:book} requires them to have rational values because in this case it is decidable whether a rational cost bound is satisfied. It is possible to extend the theory of cost functions to admit computable real values instead. One uses rational approximations of computable numbers in the construction of a non computable set obeying a cost function. Alternatively, for this paper we could restrict the values of cost functions to be algebraic numbers.}
 \]
We say that~$\cost$ is \emph{monotonic} if $\cost(x+1,s) \le \cost(x,s) \le \cost(x,s+1)$ for each~$x$ and~$s$; we also assume that $\cost(x,s)=0$ for all $x\ge s$. All cost functions in this paper will be monotonic without further mention. As stated above, we view $\cost(x,s)$ as the cost of changing at stage~$s$ a guess about the value $A(x)$ for some $\Delta^0_2$ set~$A$. Monotonicity means that the cost of a change increases with time and that smaller changes are more costly. 

If $\cost$ is a cost function, then we let $\limcost(x) = \lim_s \cost(x,s)$. We say that~$\cost$ satisfies the \emph{limit condition} if $\limcost(x)$ is finite for all~$x$, and $\limcost(x)\to 0$ as $x\to \infty$. As with monotonicity, all cost functions mentioned will henceforth satisfy the limit condition.

\begin{defn}[\cite{Nies:book}] \label{def:obeying_a_cost_function}
Let $\seq{A_s}$ be a computable approximation of a $\Delta^0_2$ set~$A$, and let~$\cost$ be a cost function. The \emph{total $\cost$-cost} of the approximation is 
\[ \sum_{s\in\omega} \left\{ \cost(x,s) \,: \, x \text{ is least such that } A_{s-1}(x) \ne A_{s}(x) \right\}.\]
We say that a $\Delta^0_2$ set~$A$ \emph{obeys}~$\cost$ if the total $\cost$-cost of \emph{some} computable approximation of~$A$ is finite. We write $A \models \cost$.
\end{defn}

It is not hard to show that if~$A$ is a c.e.\ set obeying a cost function~$\cost$, then there is a computable enumeration of~$A$ that witnesses this obedience; see~\cite{Nies:CalculusOfCostFunctions}.

\begin{defn} \label{def:the_p_cost_functions}
For a rational number $p \in (0,1]$ and a left-c.e.\ real~$\beta$ (equipped with an increasing approximation $\seq{\beta_s}$) we let 
\[ \cost_{\beta,p}(x,s) = (\beta_s - \beta_x)^p.\]
(As usual, we let $\cost_{\beta,p}(x,s) = 0$ if $x\ge s$.)
\end{defn}

As mentioned above, a set~$A$ is $K$-trivial if and only if it obeys the cost function $\cost_\Omega = \cost_{\Omega,1}$. Of course, if $p<q$ then $\cost_{\Omega,q}\le \cost_{\Omega,p}$, and so if $A$ obeys $\cost_{\Omega,p}$, then~$A$ obeys $\cost_{\Omega,q}$. In particular, if $p\leq 1$ and $A\models\cost_{\Omega,p}$, then $A$ is $K$-trivial.

\begin{proposition} \label{prop:obedience_of_our_cost_functions_is_downwards_closed}
	Let $p\in (0,1]$ be rational. The collection of sets that obey~$\cost_{\Omega,p}$ is downwards closed in the Turing degrees. 
\end{proposition}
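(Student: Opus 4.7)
Let $A \leT B$ via a Turing functional $\Phi$, and fix a computable approximation $\seq{B_s}$ of $B$ witnessing $B\models \cost_{\Omega,p}$, with total cost $\sum_s \cost_{\Omega,p}(y_s,s) < \infty$, where $y_s$ denotes the least position of change in $B$ at stage $s$. The plan is to transport obedience across the reduction by composing the approximation of $B$ with $\Phi$ to produce a computable approximation of $A$ whose $\cost_{\Omega,p}$-cost is controlled by that of $B$.

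Concretely, I would set $A_s(x) = \Phi^{B_s}(x)[s]$ whenever this computation converges with use at most $s$, and retain the previous value otherwise. Since $B_s \to B$ pointwise and $\Phi^B = A$, this approximation converges to $A$, and any change at a least position $x_s$ at stage $s$ is forced by a change in $B$ at some position $y \leq u_s(x_s)$, the use at stage $s$. The substantive task is to bound $\sum_s \cost_{\Omega,p}(x_s,s)$ by a constant multiple of $\sum_s \cost_{\Omega,p}(y_s,s)$.

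The easy case is $x_s \geq y_s$: monotonicity of $\cost_{\Omega,p}$ in its first argument immediately gives $\cost_{\Omega,p}(x_s,s) \leq \cost_{\Omega,p}(y_s,s)$, so the $A$-cost at that stage is dominated by the $B$-cost. The genuine difficulty is the opposite case $x_s < y_s$, where a change of $B$ at $y_s$ can trigger an $A$-change at a strictly smaller position whose individual $\cost_{\Omega,p}$-charge is strictly larger than $\cost_{\Omega,p}(y_s,s)$. To manage this I would exploit the subadditivity $(a+b)^p \leq a^p + b^p$ valid for the concave map $t \mapsto t^p$ with $p \in (0,1]$: writing $\cost_{\Omega,p}(x_s,s) = (\Omega_s - \Omega_{x_s})^p$ and telescoping the increment $\Omega_s - \Omega_{x_s}$ along the sequence of intermediate $B$-change stages since the previous $A$-change at $x_s$, subadditivity expresses the charge as a sum of terms bounded by the relevant $\cost_{\Omega,p}(y_{s'},s')$. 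A careful bookkeeping then shows that each $B$-change is charged $O(1)$ times in total, so $\sum_s \cost_{\Omega,p}(x_s,s)$ is finite.

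The main obstacle is precisely this amortization in the cascade case: without concavity of the $p$-th power, the cost at a small position could not be absorbed into the smaller-looking cost at a larger one, and a single $B$-change could in principle force arbitrarily expensive $A$-changes. For $p = 1$ this specializes to the classical downward closure of the $K$-trivial ideal; for general $p$ the strict concavity of $t\mapsto t^p$ is what drives the argument. I would organize the actual write-up by first passing to the notion of weak obedience introduced later in this section — which is designed precisely to allow such charge-splitting arguments on families of cost-controlled changes — establishing the downward closure at the level of weak obedience by the splitting just described, and then transferring back to obedience via the equivalence available for the family $\cost_{\Omega,p}$.
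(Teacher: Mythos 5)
Your proposal has a genuine gap, and it sits exactly where you located the difficulty. The case $x_s<y_s$ cannot be handled by concavity and amortization alone, because the telescoped increments are not covered by $B$-costs: if between two relevant stages $t_j<t_{j+1}$ the set $B$ changes at a position $y$ with $t_j<y<t_{j+1}$ (still below the large use $\vphi(x_s)$, so it does flip $A(x_s)$), then the piece $(\Omega_{t_{j+1}}-\Omega_{t_j})^p$ is in no way bounded by that change's cost $(\Omega_{t_{j+1}}-\Omega_{y})^p$, which can be arbitrarily smaller. Moreover the ``each $B$-change is charged $O(1)$ times'' bookkeeping does not hold even for a single position: if $A(x)$ flips at stages $s_1<s_2<\cdots<s_m$, each cost $(\Omega_{s_i}-\Omega_x)^p$ contains the same head segment $(\Omega_{s_1}-\Omega_x)^p$, so early increments are recharged at every later flip, giving a factor of $m$, not $O(1)$. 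The missing idea is that the hypothesis forces $B$ to be $K$-trivial, hence low for $\Omega$, and this is what the paper uses: by a Barmpalias--Downey-type lemma (\cref{lem:lows-for-omega-approximate-Omega-badly}) applied to the use function $\vphi\le_\Tur B$, there is a constant $N$ with $\Omega-\Omega_k<N(\Omega-\Omega_{\vphi(k)})$ for all $k$. Choosing the stages $s(i)$ of the induced approximation so that this inequality is witnessed at finite stages, each $A$-change at position $k$ costs at most $N^p(\Omega_{s(i)}-\Omega_{\vphi_{s(i)}(k)})^p$, which is dominated by the cost of the $B$-change below the use that triggered it; no subadditivity or amortization is needed, and the total $A$-cost is at most $N^p$ times the total $B$-cost. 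Your argument never invokes this lowness-for-$\Omega$ property, yet it is precisely what rules out the scenario of cheap high-position $B$-changes repeatedly forcing expensive low-position $A$-changes.

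The proposed fallback through weak obedience does not repair this. Weak obedience by itself does not imply obedience (\cref{prop:weak_does_not_imply_strong_obedience} exhibits a c.e.\ set Turing equivalent to $\emptyset'$ that weakly obeys $\cost_\Omega$, while every set obeying $\cost_\Omega$ is $K$-trivial and hence incomplete), and the transfer from ``$K$-trivial plus weak obedience'' back to obedience for $\cost_{\Omega,k/n}$ is part of \cref{thm:more_on_k_n_bases}, whose proof runs through the $k/n$-base machinery and invokes \cref{prop:obedience_of_our_cost_functions_is_downwards_closed} itself in its final step; relying on it here would be circular (and in a blind write-up it is simply not available).
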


Key to the proof of \cref{prop:obedience_of_our_cost_functions_is_downwards_closed} will be the fact that $K$-trivial sets do not help us approximate Chaitin's~$\Omega$ substantially better than we can with no oracle. Recall that if $B$ is $K$-trivial, it is low for Martin-L\"of randomness, so $\Omega$ is Martin-L\"of random relative to $B$.

Barmpalias and Downey~\cite[Lemma~2.5]{BD:14} proved that if $f\leq_T B$ and $\Omega$ is Martin-L\"of random relative to $B$ (i.e., $B$ is low for~$\Omega$), then there is a constant~$N$ such that $(\forall k)\; \Omega-\Omega_k < N\left(\Omega - \Omega_{f(k)}\right)$. In other words, $B$ does not allow us to speed up the approximation of $\Omega$ by more than a constant. This is exactly what we will need for the proof of \cref{prop:obedience_of_our_cost_functions_is_downwards_closed}, but we present a small improvement: in the limit, $B$ does not allow us to speed up the approximation of $\Omega$ at all.

\begin{lemma} \label{lem:lows-for-omega-approximate-Omega-badly}
	Assume that~$\Omega$ is Martin-L\"of random relative to~$B$ and that $f\colon\omega\to\omega$ is a $B$-computable function. For every $\epsilon>0$,
	\[
	(\forall^\infty k)\; \Omega-\Omega_k < (1+\epsilon)\left(\Omega - \Omega_{f(k)}\right).
	\]
	In other words,
	\[
	\liminf_{k\to\infty} \frac{\Omega - \Omega_{f(k)}}{\Omega-\Omega_k} \geq 1.
	\]
\end{lemma}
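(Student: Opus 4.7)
The plan is to argue by contradiction using a $B$-Martin-L\"of test, with the Barmpalias--Downey bound cited just above providing the crucial length control. Suppose the conclusion fails, so there is some $\epsilon>0$ and infinitely many ``bad'' indices~$k$ with $\Omega-\Omega_k\ge (1+\epsilon)(\Omega-\Omega_{f(k)})$. Rearranging, each bad~$k$ satisfies $\Omega_{f(k)}-\Omega_k\ge \epsilon(\Omega-\Omega_{f(k)})$, which is equivalent to saying that $\Omega$ lies in the $B$-c.e.\ open interval
\[
I_k \;:=\; \bigl(\Omega_{f(k)},\ \Omega_{f(k)} + \tfrac{1}{\epsilon}(\Omega_{f(k)}-\Omega_k)\bigr).
\]
Without loss of generality we may replace $f$ by $k\mapsto \max(f(k),k+1,f(k-1)+1)$, so that $f$ is strictly increasing and $f(k)>k$; this only makes the assumed bad behaviour worse. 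The Barmpalias--Downey bound (applied to~$f$ and~$B$) then gives a constant~$N$ with $\Omega_{f(k)}-\Omega_k<(N-1)(\Omega-\Omega_{f(k)})$ for every~$k$, hence $\mu(I_k)\le \epsilon^{-1}(N-1)(\Omega-\Omega_{f(k)})$.

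For each $n\ge 1$, I would define $U_n$ to be the $B$-c.e.\ open set obtained by enumerating $I_k$ at the first stage~$s$ (if any) at which the following three $B$-decidable conditions hold simultaneously: (i) $f(k)\le s$; (ii) $\Omega_s-\Omega_{f(k)}\in[2^{-n-1},2^{-n+1})$; and (iii) $\Omega_s-\Omega_{f(k)}<\epsilon^{-1}(\Omega_{f(k)}-\Omega_k)$. Condition~(iii) is the ``potentially-bad at stage~$s$'' certificate; since $\Omega_s-\Omega_{f(k)}$ increases monotonically to $\Omega-\Omega_{f(k)}$, condition~(iii) is preserved forever as soon as it first holds for a genuinely bad~$k$. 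Consequently, every truly bad~$k$ whose ``scale'' $\Omega-\Omega_{f(k)}$ lies near $2^{-n}$ eventually meets all three conditions and is enumerated into $U_n$, and then $\Omega\in I_k\subseteq U_n$.

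The crux of the argument is the measure estimate $\mu(U_n)=O(2^{-n}/\epsilon)$, which makes $\{U_n\}$ into a genuine $B$-Martin-L\"of test. Here I would use that every $I_k$ enumerated into $U_n$ has its left endpoint $\Omega_{f(k)}$ within $2^{-n+1}$ of the enumeration-stage approximation $\Omega_s$ (by~(ii)), and has length at most $\epsilon^{-1}(N-1)\cdot 2^{-n+1}$ by the Barmpalias--Downey bound applied after~(ii) certifies the scale; combining these, all such $I_k$ at scale~$n$ together lie in a window of diameter $O(2^{-n}/\epsilon)$, so $\mu(U_n)=O(2^{-n}/\epsilon)$. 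A final reindexing absorbs the constants. Since the assumption provides bad~$k$ at arbitrarily fine scales (as $\Omega-\Omega_{f(k)}\to 0$), we get $\Omega\in U_n$ for cofinitely many~$n$, contradicting the $B$-Martin-L\"of randomness of~$\Omega$.

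The main obstacle is the measure bound in the previous paragraph: the naive Solovay sum $\sum_k\mu(I_k)$ is divergent in general (the intervals $[k,f(k))$ in $\N$ can overlap with unbounded multiplicity), so one cannot simply enumerate every $I_k$ as a Solovay test. What saves the construction is that the Barmpalias--Downey bound converts the bad-scale condition into a uniform upper bound on $\mu(I_k)$ in terms of $\Omega-\Omega_{f(k)}$; all bad intervals at a given scale are then forced to cluster around $\Omega$, and the scale-by-scale packaging into $U_n$'s yields a summable ML-test. Managing the three $B$-c.e.\ conditions (i)--(iii) so that non-bad $k$'s contribute only a bounded transient amount to each $U_n$ is the delicate piece of bookkeeping.
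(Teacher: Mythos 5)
There is a genuine gap at exactly the point you identify as the crux: the measure estimate $\mu(U_n)=O(2^{-n}/\epsilon)$ does not hold for the construction as described. Condition~(ii) certifies only that the \emph{stage-$s$} quantity $\Omega_s-\Omega_{f(k)}$ lies near $2^{-n}$, whereas the Barmpalias--Downey inequality bounds $\Omega_{f(k)}-\Omega_k$ by $(N-1)\bigl(\Omega-\Omega_{f(k)}\bigr)$, i.e.\ in terms of the \emph{true} tail, which at the enumeration stage can be arbitrarily much larger than $2^{-n}$. So "length at most $\epsilon^{-1}(N-1)2^{-n+1}$" is unjustified: a $k$ with $\Omega_{f(k)}-\Omega_k$ macroscopic (say $0.1$) and true tail $0.2$ is not bad, satisfies~(iii) at every early stage, and passes~(ii) for essentially every $n$ with $2^{-n+1}<0.2$ as $\Omega_s$ sweeps upward; it then dumps an interval of length $\epsilon^{-1}\cdot 0.1$ into each such $U_n$. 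Note that (ii)+(iii) only give a \emph{lower} bound $\mu(I_k)>2^{-n-1}$, never an upper bound. The clustering claim fails for the same reason: the left endpoints $\Omega_{f(k)}$ are tied to $\Omega_{s_k}$ at the (different) enumeration stages $s_k$, and these approximations spread over a range of order $1$, not $O(2^{-n})$, so the enumerated intervals at "scale $n$" need not lie in any small window. (A smaller issue: at the end you conclude $\Omega\in U_n$ for \emph{cofinitely} many $n$; the bad scales may skip dyadic windows, so you only get infinitely many $n$. That part is repairable, since with $\sum_n\mu(U_n)<\infty$ membership in infinitely many $U_n$ already violates $B$-randomness via the Solovay-test formulation --- but only once the measure bound itself is fixed, which it is not.)

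The paper avoids this difficulty by not attempting a scale-by-scale decomposition over all $k$ at once. It iterates $f$: set $n_0=0$, $n_{s+1}=f(n_s)$, and for each $s$ puts $G_s=\bigl(\Omega_{n_{s+1}},\,\Omega_{n_{s+1}}+\tfrac1\epsilon(\Omega_{n_{s+1}}-\Omega_{n_{s-1}})\bigr)$. Along this sparse subsequence the interval lengths \emph{telescope}, giving $\sum_s\mu(G_s)\le\tfrac2\epsilon\,\Omega$ outright --- no appeal to Barmpalias--Downey and no clustering argument is needed --- so $\seq{G_s}$ is a $B$-Solovay test. Randomness of $\Omega$ relative to $B$ then yields $\epsilon(\Omega-\Omega_{n_{s+1}})>\Omega_{n_{s+1}}-\Omega_{n_{s-1}}$ for all large $s$, and the statement for arbitrary $k$ follows by a short monotonicity computation using $k\in[n_{s-1},n_s)$ and $f(k)<n_{s+1}$. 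If you want to salvage your approach, the telescoping along iterates of $f$ is the missing idea: it is what replaces the (false) claim that all potentially bad intervals at a given scale cluster in an $O(2^{-n})$ window.
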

\begin{proof}
Without loss of generality, we may assume that~$f$ is strictly increasing, and that $\epsilon$ is rational. Let $n_0=0$; given $n_s$ we let $n_{s+1} = f(n_s)$. For each~$s$ let $G_s = \big(\Omega_{n_{s+1}},\Omega_{n_{s+1}}+ \frac{1}{\epsilon}\left(\Omega_{n_{s+1}}- \Omega_{n_{s-1}}\right)\big)$. Note that $\sum_{s\in\omega} \leb(G_s) \leq \frac{2}{\epsilon}\Omega$, so $\seq{G_s}$ is a $B$-Solovay test. By assumption, $\Omega$ cannot be captured by this test, so there is an $s^*$ such that if $s\geq s^*$, then $\Omega\notin G_s$. But $\Omega\geq \Omega_{n_{s+1}}$, so it must be the case that $\Omega > \Omega_{n_{s+1}}+\frac{1}{\epsilon}( \Omega_{n_{s+1}} - \Omega_{n_{s-1}})$. Rearranging:
\[
(\forall s\geq s^*)\; \epsilon\left(\Omega-\Omega_{n_{s+1}}\right) > \Omega_{n_{s+1}} - \Omega_{n_{s-1}}.
\]
Now let $k\geq n_{s^*}$. Fix $s> s^*$ such that $k\in[n_{s-1},n_{s})$. Since $f(k)< f(n_s)= n_{s+1}$,
\begin{align*}
	(1+\epsilon)\left(\Omega - \Omega_{f(k)}\right) &> (1+\epsilon)\left(\Omega - \Omega_{n_{s+1}}\right) \\
		&> \Omega - \Omega_{n_{s+1}} + \Omega_{n_{s+1}} - \Omega_{n_{s-1}} \\
		&= \Omega - \Omega_{n_{s-1}} \geq \Omega - \Omega_k. \qedhere
\end{align*}
\end{proof}

\begin{proof}[Proof of \cref{prop:obedience_of_our_cost_functions_is_downwards_closed}]
Suppose that $A\le_\Tur B$ and~$B\models \cost_{\Omega,p}$. Let~$\Phi$ be a Turing functional such that $\Phi(B)=A$. Let~$\vphi$ be the use function for the reduction. Since $B$ is $K$-trivial and $\vphi$ is $B$-computable, we can apply \cref{lem:lows-for-omega-approximate-Omega-badly} (or Lemma~2.5 of Barmpalias and Downey~\cite{BD:14}) to get an $N>0$ such that
\[
(\forall k)\; \Omega-\Omega_k < N \left(\Omega - \Omega_{\vphi(k)}\right).
\]
The idea of the proof is to use this inequality to bound the cost of an $A$-change in terms of the cost of a corresponding $B$-change, where we take an approximation of $A$ induced by a given approximation of $B$.

Let~$\seq{B_s}$ be an approximation of~$B$ that witnesses that~$B$ obeys $\cost_{\Omega,p}$. At stage~$s$, for all $n\in \dom \Phi_s(B_s)$, let $\vphi_s(n)$ be the $\Phi_s$-use of the stage $s$ computation. We define a computable increasing sequence of stages $s(0)< s(1) < \cdots$ as follows. Let $s(0) = 0$. Given $s(i-1)$, let $s(i)>s(i-1)$ be least such that
\[
(\forall k\leq i)\; \Omega_{i+1}-\Omega_{k} < N\left(\Omega_{s(i)} - \Omega_{\vphi_{s(i)}(k)}\right).
\]
Included in this condition is the assumption that $k\in \dom \Phi_{s(i)}(B_{s(i)})$ for all $k\leq i$. The choice of $N$ guarantees that such an $s(i)$ will be found. For $i\geq 0$, let $A_i = \Phi_{s(i)}(B_{s(i)})\rest i+1$. We claim that the approximation $\seq{A_i}$ witnesses that~$A$ obeys $\cost_{\Omega,p}$.

Let $i\geq 0$ and let~$k$ be least such that $A_{i+1}(k)\neq A_i(k)$. If $k>i$, then the $\cost_{\Omega,p}$-cost accrued by the approximation $\seq{A_i}$ at stage~$i+1$ is $0$. If $k\leq i$, then the $\cost_{\Omega,p}$-cost is $(\Omega_{i+1}-\Omega_k)^p$, which is bounded by $N^p(\Omega_{s(i)}-\Omega_{\vphi_{s(i)}(k)})^p$. Let $v = \vphi_{s(i)}(k)$. The change in $A$ corresponds to a change in $B$; specifically, there must be a stage $t\in (s(i),s(i+1)]$ such that $B_t\rest{v}\ne B_{t-1}\rest{v}$. The $\cost_{\Omega,p}$-cost accrued by the approximation $\seq{B_s}$ at stage~$t$ is at least $(\Omega_t - \Omega_v)^p$, which in turn is at least $(\Omega_{s(i)}- \Omega_{\vphi_{s(i)}(k)})^p$. It follows that the total cost for~$\seq{A_j}$ is bounded by $N^p\cdot(\text{the total cost for }\seq{B_s})$, hence it is finite. 
\end{proof}

\begin{definition} \label{def:B_p}
Let $\+{B}_p$ be the collection of sets that obey~$\cost_{\Omega,p}$. 	
\end{definition}

We have seen that $\+{B}_p$ is closed downward under Turing reduction and only contains $K$-trivial sets. Nies~\cite{Nies:CalculusOfCostFunctions} proved several general results about the class of sets obeying a cost function that are helpful in understanding $\+{B}_p$. For example, $\+{B}_p$ is closed under join, which along with downward closure means that it induces an ideal in the Turing degrees. Further, every member of $\+{B}_p$ is bounded by a c.e.\ member of $\+{B}_p$, and the index set of c.e.\ members is $\Sigma^0_3$. As we have already mentioned, if $p<q$, then $\+{B}_p\subseteq \+{B}_q$. 

We will use the following, which is Theorem 3.4 of \cite{Nies:CalculusOfCostFunctions}. Here and below we write $g\le^\times h$ to mean that $g\le ch$ for some constant $c>0$.

\begin{proposition} \label{prop:calculus_quote}
	The following are equivalent for two cost functions $\cost$ and $\dcost$:
	\begin{enumerate}
		\item Every set obeying~$\cost$ also obeys $\dcost$;
		\item $\dlimcost \le^\times \limcost$. 
	\end{enumerate}
\end{proposition}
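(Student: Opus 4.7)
The proposition is an if-and-only-if, so there are two directions.

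For the direction $(2)\Rightarrow(1)$: Assume $\dlimcost(x)\le c\limcost(x)$ for all $x$, and let $A\models\cost$ via a computable approximation $\seq{A_s}$ with finite total $\cost$-cost $C$. The plan is to build a new computable approximation $\seq{\tilde A_n}$ of $A$ via a slow reindexing $\tilde A_n=A_{\sigma(n)}$, where $\sigma\colon\NN\to\NN$ is computable, nondecreasing, and tends to infinity. The point of the reindexing is that $\dcost(x,n)\le \dlimcost(x)\le c\limcost(x)$, so once $\cost(x,\sigma(n))$ has grown close to $\limcost(x)$, say $\cost(x,\sigma(n))\ge \limcost(x)/2$, we obtain the pointwise bound $\dcost(x,n)\le 2c\cost(x,\sigma(n))$. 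Since $\limcost$ is not directly computable, I would instead choose $\sigma(n)$ to be the least $s\ge\sigma(n-1)+1$ such that $\cost(x,s)\ge \dcost(x,n)/(2c)$ for every $x\le n$; such $s$ always exists because $\cost(x,s)\to\limcost(x)\ge \dlimcost(x)/c$. With this choice, whenever $\tilde A$ changes at new stage~$n$ at position $\tilde x_n\le n$, we pay $\dcost(\tilde x_n,n)\le 2c\cost(\tilde x_n,\sigma(n))$. The delicate step is the amortized bookkeeping: each change in $\tilde A$ at new stage~$n$ is witnessed by at least one original change within the batch $(\sigma(n-1),\sigma(n)]$, and by monotonicity in~$x$ this original change pays at least $\cost(\tilde x_n,\cdot)$ there. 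Charging each new payment to a distinct original one, and using monotonicity in~$s$ carefully together with a mild further growth condition on~$\sigma$, one shows $\sum_n \cost(\tilde x_n,\sigma(n))=O(C)$, hence the total $\dcost$-cost of $\seq{\tilde A_n}$ is finite.

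For $(1)\Rightarrow(2)$, I argue the contrapositive. Suppose $\dlimcost\not\le^\times\limcost$. Then there is an increasing sequence $x_k$ with $\dlimcost(x_k)/\limcost(x_k)\to\infty$, and by passing to a subsequence one may assume $\dlimcost(x_k)\ge 4^k\limcost(x_k)$ with $\limcost(x_k)$ so small that $\sum_k \limcost(x_k)<\infty$ yet $\sum_k \dlimcost(x_k)=\infty$. I then construct a noncomputable c.e.\ set $A$ enumerating (a subsequence of) the $x_k$'s: at stage $s_k$ chosen so that $\dcost(x_k,s_k)\ge \dlimcost(x_k)/2$, put $x_k$ into $A$. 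The enumeration witnesses $A\models\cost$ with total cost at most $\sum_k \limcost(x_k)<\infty$. To see $A\not\models\dcost$, one uses that any computable approximation of $A$ must eventually commit to each $x_k\in A$, at which point $\dcost$-cost at least $\dlimcost(x_k)/2$ is accrued; summability of the original cost but divergence of $\sum \dlimcost(x_k)$ shows the total $\dcost$-cost must be infinite for every approximation. To guarantee $A$ is noncomputable (so the obedience is nontrivial) I use a standard diagonalisation, keeping enough flexibility by drawing $x_k$ from a larger pool of positions with comparably large ratio.

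The main obstacle is in $(2)\Rightarrow(1)$: the amortised comparison between the $\dcost$-payments in the new, slowed-down approximation and the $\cost$-payments in the original requires some care, because $\cost(\tilde x_n,\sigma(n))$ may exceed $\cost(x_s,s)$ for an original change at stage $s\in(\sigma(n-1),\sigma(n)]$, and one must choose $\sigma$ fine enough that each new change can be matched to an original one whose payment dominates. A secondary issue is the absence of a direct computable handle on $\limcost$; this is what motivates using the condition $\cost(x,s)\ge\dcost(x,n)/(2c)$ as a computable surrogate for $\cost(x,s)\ge\limcost(x)/2$.
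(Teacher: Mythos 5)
First, a remark on scope: the paper does not prove this proposition at all; it is quoted from Nies's \emph{Calculus of cost functions} (Theorem 3.4 there). So your proposal has to be judged on its own merits, and both directions, as sketched, have genuine gaps.

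For $(2)\Rightarrow(1)$, the overall strategy (a delayed sub-approximation $\tilde A_n=A_{\sigma(n)}$, charging each new $\dcost$-payment to an original $\cost$-payment in the corresponding window) is the right one, but your bookkeeping is calibrated one step off, and the claimed bound $\sum_n\cost(\tilde x_n,\sigma(n))=O(C)$ is false for the $\sigma$ you define. Concretely, let $q_x$ be nonincreasing with $q_x\to 0$ and $\sum_x q_x=\infty$, let $h$ be computable and increasing, and set $\dcost(x,s)=q_x$ for $s>x$, while $\cost(x,s)=2^{-x}$ for $x<s\le h(x)$ and $\cost(x,s)=q_x$ for $s>h(x)$; then $\dlimcost=\limcost$. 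If the given approximation changes position $x$ exactly once, at a stage in $(h(x-1)+1,h(x)]$, its total $\cost$-cost is $\sum_x 2^{-x}<\infty$, but your $\sigma$ satisfies $\sigma(n)\approx h(n-1)+1$, so the change at $x$ is first reflected at new stage $n=x+1$ and is charged $\dcost(x,x+1)=q_x$; the total is $\sum_x q_x=\infty$, and likewise $\sum_n\cost(\tilde x_n,\sigma(n))$ diverges while $C$ is finite. No ``further growth condition'' on $\sigma$ repairs this, because in general there is no computable bound on when $\cost(x,\cdot)$ approaches its limit; the correct repair is a one-step look-ahead: require at step $n$ that $2c\,\cost(x,\sigma(n))\ge\dcost(x,n+1)$ for all $x\le n$, and then a change first seen at new stage $n+1$ at position $x$ costs $\dcost(x,n+1)\le 2c\,\cost(x,\sigma(n))\le 2c\,\cost(x,s)$ for the original change stage $s\in(\sigma(n),\sigma(n+1)]$, which is exactly an original payment. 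This is precisely the device used in this paper's proof of \cref{prop:obedience_of_our_cost_functions_is_downwards_closed}, where the condition defining $s(i)$ involves $\Omega_{i+1}$ rather than $\Omega_i$.

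For $(1)\Rightarrow(2)$ the plan does not work even in outline. First, the required selection of positions may not exist: take $\cost(x,s)=4^{-x}$ and $\dcost(x,s)=2^{-x}$ for $s>x$. Then $\dlimcost\not\le^\times\limcost$, but $\sum_x\dlimcost(x)<\infty$, so no subsequence has $\sum_k\dlimcost(x_k)=\infty$; worse, in this example every c.e.\ set obeys $\dcost$ (any enumeration pays at most $\sum_x 2^{-x}$), so no c.e.\ set of the kind you describe can witness the failure of $\cost\to\dcost$. Second, the key claim that every computable approximation of $A$ ``must eventually commit to each $x_k\in A$, at which point $\dcost$-cost at least $\dlimcost(x_k)/2$ is accrued'' is false: an approximation may be correct at $x_k$ from the outset, or change there at a stage $\le x_k$ (or before $\dcost(x_k,\cdot)$ has grown), paying nothing. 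The real proof of this direction must diagonalise against every candidate approximation: for each pair (approximation index, rational bound) one forces the opponent to change late, by repeatedly toggling a witness $x$ with large observed ratio $\dcost(x,s)/\cost(x,s)$ only after the opponent has agreed with the current value of $A(x)$; each round costs us at most $\limcost(x)$ and forces the opponent to pay at least the observed $\dcost$-value, and the number of rounds is tuned to the bound. The resulting separating set is $\Delta^0_2$ but, as the example above shows, in general cannot be c.e., so the one-shot enumeration framework of your sketch cannot be salvaged by adding a standard noncomputability diagonalisation.
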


Suppose that $p<q$. Since $\limcost_{\Omega,p}$ is not bounded by any constant multiple of~$\limcost_{\Omega,q}$, the ideal $\+{B}_p$ is properly contained in $\+{B}_q$. While \cref{prop:calculus_quote} only produces a $\DII$ set in $\+{B}_q- \+{B}_p$, in our case this difference between the ideals can be witnessed by a c.e.\ set. For take $V \in \+ B_q - \+ B_p$. Since $\+ B_q$ is characterised by obedience to a cost function, there is a c.e.\ set $A \ge_T V$ in $\+ B_q$. Since $\+ B_p$ is downward closed, we have $A \not \in \+ B_p$.

\subsection{Coherent tests, and tests bounded by cost functions} 
\label{sub:coherent_tests_and_tests_bounded_by_cost_functions}

A~$\Pi^0_2$ class (i.e., an effective $G_\delta$ set) is the intersection $\bigcap_n V_n$ of a nested sequence $V_0\supseteq V_1\supseteq \cdots$ of uniformly c.e.\ open sets. Nesting ensures that the class is null if and only if $\leb(V_n)\to 0$. Such null classes characterise weak $2$-randomness. If we assume that $\leb(V_n)$ is bounded by a computable function tending to~$0$, then we have a \ML\ test. Randomness notions in between \ML\ randomness and weak $2$-randomness can be introduced by taking a suitable noncomputable witness to the fact that $\leb(V_n)\to 0$. For example, Oberwolfach randomness~\cite{BGKNT:16} can be characterised using tests satisfying $\leb(V_n)\leq \beta-\beta_n$, where $\seq{\beta_n}$ is a computable increasing sequence of approximations limiting to a left-c.e.\ real~$\beta$. In general, cost functions can be used to gauge the rate that $\leb(V_n)$ converges to~$0$. This generalises the previous example because $\cost(n,s) = \beta_s - \beta_n$ is a cost function (these are the \emph{additive} cost functions in the sense of~\cite{Nies:CalculusOfCostFunctions}).

\begin{definition}[{\cite[Def.\ 2.13]{BGKNT:16}}] \label{def:cost-bounded_test}
Let~$\cost$ be a cost function. A nested sequence $\seq{V_n}$ of uniformly c.e.\ open sets is a \emph{$\cost$-bounded test} if $\leb(V_n) \le^\times \limcost(n)$ for all~$n$.\footnote{We note that the concept of $\cost$-test in \cite{BGKNT:16} was defined without the linear constant.} 
\end{definition}

The limit condition for~$\cost$ ensures that $\leb(V_n)\to 0$, so $\bigcap_n V_n$ is indeed null. If $\seq{V_n}$ is a $\cost$-bounded test, then there is a (uniform) enumeration $\seq{V_{n,s}}$ of the sets~$V_n$ such that for all~$n$ and~$s$, $V_{n+1,s}\subseteq V_{n,s}$ and $\leb(V_{n,s})\le^\times \cost(n,s)$ (so in particular, $V_{n,n} = \emptyset$). 

\medskip

Tests bounded by additive cost functions as described above (also known as ``Auckland tests'') define the same null sets as ``Oberwolfach tests'' \cite{BGKNT:16}, which are coherent restrictions of balanced tests \cite{PaperOnBalancedTests}. The general context here is Demuth's framework for defining null sets using components that can be reset. We consider nested tests $\seq{V_n}$ where $V_n = W_{f(n)}$ for some $\Delta^0_2$ function~$f$. (Here $\seq{W_e}$ is an effective enumeration of all effectively open sets.) We require that $\leb(V_n)\le 2^{-n}$. If $\seq{f_s}$ is a computable approximation for~$f$, then $\version{V_n}{s} = W_{f_s(n)}$ is the stage~$s$ approximation of the components of the test. 
The informal idea is that when building such a test we start covering some reals, but at a later stage we change our minds ($f_s(n)\ne f_{s-1}(n)$); we empty some of the components of the test and restart them. A \emph{balanced test} is such a test for which the approximation for $f(n)$ changes $O(2^n)$ times. An Oberwolfach test (a coherent balanced test) requires the changes to be coordinated across the levels of the test: if $s$ and~$t$ are successive stages at which~$f_s(n)\ne f_{s-1}(n)$ and $f_t(n)\ne f_{t-1}(n)$, then either $f_{s}(n-1)\ne f_{s-1}(n-1)$ or $f_t(n-1)\ne f_{t-1}(n-1)$. That is, every two changes in $\version{V_n}{s}$ prompt a change to $\version{V_{n-1}}{s}$. If we further assume that $V_0$ never changes, then this is equivalent to the existence of a system of (uniformly c.e.\ open) components $G_{\sigma}$ for $\sigma\in 2^{<\w}$ and a left-c.e.\ real~$\alpha\in 2^\w$ such that $\leb(G_{\sigma})\le 2^{-|\s|}$ and $V_n = G_{\alpha\rest n}$.

\begin{definition} \label{def:p-OW-tests}
	Let $p\in (0,1]$ be rational. A \emph{$p$-Oberwolfach test} consists of a left-c.e.\ binary sequence~$\alpha\in 2^\w$ and a uniformly c.e.\ open array $\seq{G_\s}_{\s\in 2^{<\w}}$ such that:
	\begin{itemize}
		\item For all $\s\in 2^{<\w}$ and $i<2$, $G_{\s\conc i}\subseteq G_{\s}$; 
		\item For all $\s\in 2^{<\w}$, $\leb(G_{\s}) \le^\times 2^{-p|\s|}$. 
	\end{itemize}
	The null set defined by the test is $\bigcap_n G_{\alpha\rest n}$. 
\end{definition}

We say that a test~$Q$ \emph{covers} a test~$P$ if the null set defined by~$P$ is a subset of the null set defined by~$Q$. The following generalises one direction of the equivalence of Auckland and Oberwolfach tests; the proof however required modification. 

\begin{proposition} \label{prop:covering_p-OW-tests_by_p-Auckland_tests}
	Let $p\in (0,1]$ be rational. Every $p$-Oberwolfach test can be covered by a $\cost_{\Omega,p}$-bounded test. 
\end{proposition}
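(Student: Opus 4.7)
The plan is to exploit the Solovay reducibility of $\alpha$ to $\Omega$. Since $\alpha$ is left-c.e.\ and $\Omega$ is universal among such reals, there is a constant $c$ such that $\alpha - \alpha_s \le c(\Omega - \Omega_s)$ for all $s$. This lets us translate the $\alpha$-guided structure of the $p$-Oberwolfach test into one calibrated against $\Omega$.

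In the additive case $p = 1$, the standard proof sets $V_n = \bigcup_s G_{\alpha_s \rest n}$; the number of distinct values of $\alpha_s \rest n$ over all $s \ge n$ is at most $2^n(\alpha - \alpha_n) + O(1)$, so $\leb(V_n) \le^\times \alpha - \alpha_n \le^\times \Omega - \Omega_n$, as required. For $p < 1$ this naive construction yields only $\leb(V_n) \le^\times 2^{(1-p)n}(\Omega - \Omega_n)$, which exceeds the target $(\Omega - \Omega_n)^p$ whenever $\Omega - \Omega_n > 2^{-n}$. The natural fix — and this is the modification alluded to by the authors — is to use a \emph{longer}, adaptively chosen prefix of $\alpha$ at each level.

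Concretely, I would at each stage $s$ with $\Omega_s > \Omega_n$ set
\[
    m_s(n) \;=\; \max\bigl(n,\; \lceil -\log(\Omega_s - \Omega_n)\rceil\bigr),
\]
and include $G_{\alpha_s \rest m_s(n),\,s}$ in the enumeration of $V_n$. This choice makes each individual contribution have measure $\le^\times 2^{-p\, m_s(n)} \le (\Omega_s - \Omega_n)^p$, so it fits the stage-wise budget. Because $m_s(n)$ is non-increasing in $s$ and eventually stabilises at $m_\infty(n) = \max(n, \lceil -\log(\Omega - \Omega_n)\rceil)$, and because $\alpha_s \rest m_\infty(n) \to \alpha \rest m_\infty(n)$, we will have $G_{\alpha \rest m_\infty(n)} \subseteq V_n$, which is enough to cover $\bigcap_k G_{\alpha \rest k}$ by the nesting of the $G_\sigma$'s. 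Nesting $V_{n+1} \subseteq V_n$ is enforced by the inequality $m_s(n+1) \ge m_s(n)$, again using $G_{\sigma^\smile i} \subseteq G_\sigma$.

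The hard part will be the total measure bound $\leb(V_n) \le^\times (\Omega - \Omega_n)^p$. I would partition the stages according to the value of $m_s(n)$: for each $m \ge m_\infty(n)$ there is an interval $[s_m, s_{m-1})$ of stages during which $m_s(n) = m$, characterised by $\Omega_s - \Omega_n \in (2^{-m}, 2^{-m+1}]$. Within such an epoch the contributions come from the distinct values taken by $\alpha_s \rest m$; by Solovay applied at the start of the epoch, this count is at most $2^m \cdot c(\Omega - \Omega_{s_m})$, giving per-epoch measure $\le^\times c \cdot 2^{(1-p)m}(\Omega - \Omega_{s_m})$. The obstacle is that naively summing this geometric-type series is dominated by the largest $m$ (the very first epoch, when $\Omega_{s_0} - \Omega_n$ may be tiny), which can overshoot $(\Omega - \Omega_n)^p$. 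I expect the resolution to be a refinement where (i) the $\Omega$-differences $\Omega_{s_m} - \Omega_{s_{m-1}}$ are telescoped using the concavity-derived inequality $(a+b)^p \le a^p + b^p$ for $p \le 1$ to turn the per-epoch bounds into a summable quantity, and (ii) a greedy budget-check is imposed on the enumeration, so that an addition at stage $s$ is performed only if the running total of $V_{n,s-1}$ plus the proposed contribution remains $\le^\times (\Omega_s - \Omega_n)^p$; skipped additions are then shown to be recovered at later stages once the approximation of $\alpha \rest m_\infty(n)$ settles. Verifying that this greedy procedure still covers the null set while respecting the bound is the technical heart of the proof.
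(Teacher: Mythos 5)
There is a genuine gap. Your construction replaces the naive $V_n$ by an adaptively truncated, greedily budgeted enumeration, but the step you yourself flag as ``the technical heart'' --- that the greedy budget-check still covers the null set --- is exactly what is missing, and it is not a routine verification: early ``junk'' contributions $G_{\alpha_s\rest{m_s(n)}}$ (made while $\Omega_s-\Omega_n$ is still tiny but $\alpha$ has already moved a lot) can consume the budget before the true prefix of $\alpha$ appears, and nothing in the sketch guarantees that $G_{\alpha\rest{m_\infty(n)}}$ can still be added. The underlying source of trouble is that you calibrate the construction against $\Omega_s-\Omega_n$ from the outset, whereas Solovay reducibility only compares \emph{tails} ($\alpha-\alpha_s\le c(\Omega-\Omega_s)$, and even this requires speeding up $\alpha$'s approximation), not increments over an epoch; this is why your per-epoch bounds involve the full tail $\Omega-\Omega_{s_m}$ and blow up. The proposed repair (i) does not work as stated: for $p\le 1$ one has $\sum_i a_i^p\ge\bigl(\sum_i a_i\bigr)^p$, so telescoping the epoch contributions via $(a+b)^p\le a^p+b^p$ points in the wrong direction for bounding the total by $(\Omega-\Omega_n)^p$.

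The missing idea is that no adaptive construction is needed: the naive test already works, provided you truncate \emph{in the analysis} rather than in the enumeration, and transfer to $\Omega$ only at the end. Keep $V_n=\bigcup_{s>n}G_{\alpha_s\rest{n}}$ and let $k$ be such that $2^{-k-1}\le\alpha-\alpha_n<2^{-k}$. Then for $s>n$ there are at most two strings of the form $\alpha_s\rest{k}$, and by nestedness $G_{\alpha_s\rest{n}}\subseteq G_{\alpha_s\rest{k}}$, so $\leb(V_n)\le^\times 2\cdot 2^{-pk}\le^\times\bigl((\alpha-\alpha_n)+2^{-n}\bigr)^p$ (the $2^{-n}$ term, i.e.\ passing to $\beta=\alpha+1$ with $\beta_s=\alpha_s+(1-2^{-s})$, handles the case $k\ge n$). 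This makes $\seq{V_n}$ a $\cost_{\beta,p}$-bounded test with no budgeting at all. Only now invoke Solovay completeness of $\Omega$, which supplies a computable increasing $f$ with $\beta-\beta_{f(n)}\le^\times\Omega-\Omega_n$, and set $U_n=V_{f(n)}$; this re-indexing uses the tail comparison only at computably chosen stages, which is precisely what Solovay completeness gives, and avoids the local mismatch between the rates of $\alpha$ and $\Omega$ that your approach runs into.
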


\begin{proof}
Let $\left(\seq{G_\s},\alpha\right)$ be a $p$-Oberwolfach test. Let $\beta = \alpha+1$ and let $\beta_s = \alpha_s + (1-2^{-s})$ be the associated increasing approximation; so $\beta - \beta_s = (\alpha-\alpha_s) + 2^{-s}$. We first show that the test $\left(\seq{G_\s},\alpha\right)$ can be covered by a $\cost_{\beta,p}$-bounded test $\seq{V_n}$. We let $V_n = \bigcup_{s>n} G_{\alpha_s\rest n}$. Certainly $\bigcap_n G_{\alpha\rest n} \subseteq \bigcap_n V_n$; we need to show that $\leb(V_n)\le^\times (\beta-\beta_n)^p$. Let~$k$ be the natural number such that $2^{-k-1} \le (\alpha-\alpha_n) < 2^{-k}$. Then there are at most two strings of the form $\alpha_s\rest{k}$ for $s>n$. If $k\ge n$ then there are at most two strings of the form $\alpha_s\rest n$ for $s>n$, and so $\leb(V_n)\le^\times 2\cdot 2^{-pn}$; because $\beta-\beta_n \ge 2^{-n}$ we get $\leb(V_n)\le^\times (\beta-\beta_n)^p$. If $k<n$ then we use the fact that~$\seq{G_\s}$ is nested: in that case $V_n\subseteq \bigcup_{s>n} G_{\alpha_s\rest k}$ and so $\leb(V_n) \le^\times 2\cdot 2^{-pk}$, while $(\beta-\beta_n)^p \ge (\alpha-\alpha_n)^p\ge 1/2^p\cdot 2^{-pk}$, whence $\leb(V_n) \le^\times 2^{p+1}(\beta-\beta_n)^p$ as required. 

Next we cover~$\seq{V_n}$ by a $\cost_{\Omega,p}$-bounded test~$\seq{U_n}$. For this we use the fact that~$\Omega$ is Solovay complete; there is some increasing computable function~$f$ such that $\beta-\beta_{f(n)}\le^\times \Omega-\Omega_n$. So we let $U_n = V_{f(n)}$. 
\end{proof}

It is also the case that every $\cost_{\Omega,p}$-bounded test can be covered by a $p$-Oberwolfach test. However, we do not need this fact and do not include a proof.


\subsection{Capturing the columns of \texorpdfstring{\boldmath$\Omega$}{Omega}} 
\label{sub:capturing_the_columns_of_Omega}

We will work with the computable probability space $(2^\w)^n$ for various $n<\w$, and in fact with computable probability spaces $(2^\w)^F$ where $F\subseteq \{1,2,\dots n\}$; the latter is immediately identified with $(2^\w)^{|F|}$ by using the increasing enumeration of~$F$. Elements of $(2^\w)^F$ will be denoted by uppercase Roman letters. If $Z\in (2^\w)^F$ and $i\in F$ then $Z_i$ is the $i\tth$ component of~$Z$. We identify~$n$ with $\{1,2,\dots, n\}$ so each $Z\in (2^\w)^n$ is the tuple $(Z_1,Z_2,\dots, Z_n)$.

For each $n< \w$, the computable probability space $(2^\w)^n$ is computably isomorphic to~$2^\w$ via a measure-preserving map (and so the map preserves both Turing degree and ML-randomness). There are several such maps and for most applications it does not matter which one we take. However at times it is important that we use the canonical map which distributes bits evenly: for $X\in 2^\w$ we define $j_n(X) = (X_1,X_2,\dots, X_n) \in (2^\w)^n$ by letting $X_{j+1}(k) = X(nk+j)$. The sequences~$X_1,\dots, X_n$ are called the \emph{$n$-columns} of~$X$. We also write $X = X_1\oplus X_2 \oplus \cdots \oplus X_n$. We sometimes abuse notation and write~$X$ for $j_n(X)$. However, we will denote the $n$-columns of~$\Omega$ by $\bar \Omega_1,\dots, \bar \Omega_n$, so as to not confuse them with~$\Omega_s$, the stage-$s$ approximation for~$\Omega$. 

We introduce further notation which will be useful here and later. Let $F\subseteq \{1,2,\dots, n\}$. We define the projection $\pi_F\colon (2^\w)^n\to (2^\w)^{F}$ by erasing the entries with indices outside~$F$. For clarity, for $Z\in (2^\w)^n$ we also denote $\pi_F(Z)$ by $Z_F$. Using this notation we can rephrase \cref{def:k/n-base}:

\begin{definition} \label{def:k/n_base_2}
	A set~$A$ is a \emph{$k/n$-base} if there is a random tuple $Z\in (2^\w)^n$ such that $A$ is computable from $Z_F$ for every $F\subseteq \{1,2,\dots, n\}$ of size~$k$. 
\end{definition}

The tuple~$Z$ is called a \emph{witness} for~$A$ being a $k/n$-base. The abuse of notation mentioned above results in us sometimes calling $\bigoplus_{i\le n}Z_i$ a witness as well. As promised, we will show that Chaitin's~$\Omega$ (which of course can be taken to be any left-c.e.\ random sequence) is a witness for every $k/n$-base. The following proposition is the first step toward that result.

\begin{proposition} \label{prop:covering_columns_of_Omega}
	Let $n\ge 1$ and $F\subseteq \{1,2,\dots, n\}$. Let $\bar \Omega_1,\bar \Omega_2,\dots, \bar \Omega_n$ be the $n$-columns of~$\Omega$. Then $({\bar \Omega_j})_{j\in F} = \Omega_F$ is captured by a $\cost_{\Omega,|F|/n}$-bounded test. 
\end{proposition}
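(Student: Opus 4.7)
The strategy is to build a natural $(|F|/n)$-Oberwolfach test on $(2^\w)^F$ whose null set contains $\Omega_F$, and then invoke \cref{prop:covering_p-OW-tests_by_p-Auckland_tests} to pass to a $\cost_{\Omega,|F|/n}$-bounded test capturing $\Omega_F$. The underlying intuition is that knowing $\Omega$ up to $|\sigma|$ bits determines each of its $n$-columns to within roughly $|\sigma|/n$ bits, and so locates $\Omega_F$ in a cylinder of measure roughly $2^{-(|F|/n)|\sigma|}$, which is exactly the scaling the Oberwolfach framework requires.

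Concretely, for each $\sigma \in 2^{<\w}$ I would split $\sigma$ into $n$ columns $\sigma^{(1)}, \ldots, \sigma^{(n)}$ using the same rule $j_n$ used to split infinite sequences (so $\sigma^{(i)}(k) = \sigma(nk+i-1)$ whenever this is defined), and let $G_\sigma \subseteq (2^\w)^F$ be the clopen cylinder $\{Z : Z_i \succeq \sigma^{(i)} \text{ for every } i \in F\}$. Setting $\alpha = \Omega$, the pair $(\seq{G_\sigma}, \Omega)$ is the candidate Oberwolfach test.

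The three conditions to verify are all routine. Nesting, i.e.\ $G_\tau \subseteq G_\sigma$ whenever $\sigma \preceq \tau$, is immediate from $\sigma^{(i)} \preceq \tau^{(i)}$. Capture, i.e.\ $\Omega_F \in G_{\Omega \rest \ell}$ for every $\ell$, follows because the $n$-column decomposition commutes with truncation, so $(\Omega \rest \ell)^{(i)}$ is an initial segment of $\bar\Omega_i$ for each $i$. The measure bound uses the fact that the $n$-columns of $\sigma$ partition its bits: $\sum_{i=1}^n |\sigma^{(i)}| = |\sigma|$ with each $|\sigma^{(i)}| \in \{\lfloor |\sigma|/n\rfloor, \lceil |\sigma|/n\rceil\}$, whence $\sum_{i\in F}|\sigma^{(i)}| \geq (|F|/n)|\sigma| - |F|$ and so
\[
\leb(G_\sigma) \;=\; 2^{-\sum_{i \in F}|\sigma^{(i)}|} \;\leq\; 2^{|F|}\cdot 2^{-(|F|/n)|\sigma|} \;\leq^\times\; 2^{-(|F|/n)|\sigma|}.
\]

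With the $(|F|/n)$-Oberwolfach test in hand, \cref{prop:covering_p-OW-tests_by_p-Auckland_tests} directly furnishes a $\cost_{\Omega,|F|/n}$-bounded test whose null set covers that of $(\seq{G_\sigma}, \Omega)$, and hence captures $\Omega_F$. There is no real obstacle to the argument; the only substantive point is the mild bookkeeping around the column-length rounding, which produces the harmless constant factor $2^{|F|}$ absorbed into $\leq^\times$.
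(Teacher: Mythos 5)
Your proposal is correct and follows essentially the same route as the paper: the paper's hungry set $G_\s = \{X_F : X \in 2^\w,\ \s \prec X\}$ is exactly your product cylinder $\prod_{i\in F}[\s^{(i)}]$, the same measure estimate (up to the constant absorbed by $\le^\times$) shows $(\seq{G_\s},\Omega)$ is an $|F|/n$-Oberwolfach test, and the conclusion is obtained by the same appeal to \cref{prop:covering_p-OW-tests_by_p-Auckland_tests}. No gaps; your explicit verification of nesting and capture only spells out what the paper leaves implicit.
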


\begin{proof}
	For $\s\in 2^{<\w}$, let 
$	G_\s = \left\{ X_F \,:\, X\in 2^\w \andd \s\prec X \right\}$,
where again by~$X_F$ we really mean $\pi_F(j_n(X))$. The test $\left(\seq{G_\s},\Omega\right)$ captures $\Omega_F$. If~$n$ divides~$|\s|$, then $\leb(G_\s)$ precisely equals $2^{-(|F|/n)\cdot |\s|}$ (as we specify precisely $(|F|/n)|\s|$ many bits). So in general $\leb(G_\s)$ is bounded by $2^n\cdot 2^{-|F|/n\cdot |\s|}$ and is thus an $|F|/n$-Oberwolfach test. The result follows from \cref{prop:covering_p-OW-tests_by_p-Auckland_tests}.
\end{proof}

Note that we used the fact that we are dealing with the canonical ``bits evenly distributed'' isomorphism between~$2^\w$ and $(2^\w)^n$. We cannot capture an arbitrary computable split of~$\Omega$ by a $\cost_{\Omega,1/2}$-test. As a result, it is not the case that for any computable splitting of~$\Omega$ into two parts, both parts compute every $1/2$-base.\footnote{For example, consider the 3-columns $\bar \Omega_1,\bar\Omega_2,\bar\Omega_3$ of~$\Omega$. By considering each splitting $\bar \Omega_i, \bar\Omega_j\oplus \bar \Omega_k$ (where $\{i,j,k\} = \{1,2,3\}$), we see that if for every computable splitting of~$\Omega$ into two parts, both parts compute~$A$, then $A$ is a $1/3$-base. However, not every $1/2$-base is a $1/3$-base.}  


\subsection{Analysis of c.e.\ \texorpdfstring{\boldmath$k/n$}{k/n} bases} 
\label{sub:analysis_of_c_e_k_n_bases}

We can now sketch the proof of \cref{thm:k_n_base} in the special case that~$A$ is c.e. The main technical result is \cref{prop:ce_k_n_bases}, which says that if~$A$ is a $k/n$-base, then~$A$ obeys the cost function $\cost_{\Omega,k/n}$. Another important fact (see for example \cite{BGKNT:16}) is that if a set~$A$ obeys a cost function~$\cost$, then it is computable from any random sequence that is captured by a $\cost$-bounded test (thus for example, any random that is not Oberwolfach random computes all $K$-trivial sets). \Cref{prop:covering_columns_of_Omega} says that any~$k$-tuple of distinct $n$-columns of~$\Omega$ can by captured by a $\cost_{\Omega,k/n}$-bounded test, and so:
\begin{itemize}
	\item each such $k$-tuple computes every $k/n$-base (since these obey $\cost_{\Omega,k/n}$);
	\item any set obeying the cost function $\cost_{\Omega,k/n}$ is a $k/n$-base. 
\end{itemize}


\subsection{Weak obedience to cost functions} 
\label{sub:weak_obedience}

We do not know how to show directly that, in general, every $k/n$-base obeys $\cost_{\Omega,k/n}$. To overcome this we introduce a weakening of the notion of obedience.

\begin{definition} \label{def:weak_obedience}
	Let $\seq{A_s}$ be a computable approximation of a $\Delta^0_2$ set~$A$. An \emph{$n$-stage} for this approximation is a stage~$s$ at which $A_s\rest{n} = A_{s-1}\rest{n}$, $A_s(n) \ne A_{s-1}(n)$, and $A_{s}\rest{n+1} = A\rest{n+1}$. Note that there is not necessarily an $n$-stage for every~$n$, but there are $n$-stages for infinitely many~$n$. 

	Let~$\cost$ be a cost function. The \emph{weak total $\cost$-cost} of the approximation~$\seq{A_s}$ is 
	\[
		\sum \left\{ \cost(n,s) \,:\,s \text{ is the last $n$-stage} \right\}.
	\]
	The approximation $\seq{A_s}$ witnesses that~$A$ \emph{weakly obeys} the cost function~$\cost$ if the weak total $\cost$-cost of the approximation is finite. 
\end{definition}

If~$A$ obeys~$\cost$ then it also weakly obeys it; the converse fails by the following, combined with the fact that a $K$-trivial is never Turing complete.

\begin{proposition} \label{prop:weak_does_not_imply_strong_obedience}
	There is a c.e.\ set $A\equiv_T\emptyset'$ that weakly obeys~$\cost_\Om$.
\end{proposition}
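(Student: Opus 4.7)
The plan is to construct $A$ by enumerating many positions per stage, exploiting the leeway that weak obedience provides: a change at position $n$ only contributes to the weak cost if no smaller position enters $A$ after $n$. Concretely, I will fix a computable pairing $\langle\cdot,\cdot\rangle\colon \omega^2 \to \omega$ satisfying (a) $\langle e, 0\rangle < \langle e, k\rangle$ for all $k \geq 1$ and (b) $\langle e, 0\rangle < \langle e', 0\rangle$ whenever $e < e'$. Using a computable one-element-per-stage enumeration $\seq{\emptyset'_t}$ of $\emptyset'$, I define $A$ by enumerating, at each stage $t$ at which a new element $e_t$ enters $\emptyset'$, the entire block $\langle e_t, 0\rangle < \langle e_t, 1\rangle < \cdots < \langle e_t, t\rangle$ into $A$. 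The reduction $e \in \emptyset' \iff \langle e, 0\rangle \in A$ is computable from $A$, and since $A$ is c.e., this gives $A \equiv_\Tur \emptyset'$.

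The weak-cost analysis then proceeds in two steps. First, among the positions enumerated simultaneously at stage $t$, only the least one $\langle e_t, 0\rangle$ can qualify as an $n$-stage: for $k \geq 1$, the smaller column-mates $\langle e_t, 0\rangle, \ldots, \langle e_t, k-1\rangle$ also change at stage $t$, violating the condition $A_t\uhr{n} = A_{t-1}\uhr{n}$ required of an $n$-stage. Second, the position $\langle e_t, 0\rangle$ is itself disqualified if at some later stage $t' > t$ we enumerate $\langle e', 0\rangle$ with $e' < e_t$ (so by (b), $\langle e', 0\rangle < \langle e_t, 0\rangle$), which violates $A_t\uhr{\langle e_t, 0\rangle + 1} = A\uhr{\langle e_t, 0\rangle + 1}$. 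Thus the positions contributing to the weak cost are exactly the $\langle e, 0\rangle$ for which $e$ is a right-to-left minimum of the $\emptyset'$-enumeration $\seq{e_t}$ --- namely, the $e \in \emptyset'$ such that no strictly smaller element enters $\emptyset'$ at any later stage.

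The main obstacle is bounding the sum $\sum (\Omega_{t_e} - \Omega_{\langle e,0\rangle})$ over right-to-left minima $e$. My approach is to choose the pairing so that $\langle e, 0\rangle$ grows fast enough in $e$, and to adjust the enumeration of $\emptyset'$ (speeding up when possible, e.g., enumerating $e$ into $\emptyset'$ at stage $\max\{\text{true entry stage}, \langle e, 0\rangle\}$) so that for every right-to-left minimum we either have $t_e \leq \langle e, 0\rangle$ (giving $\cost_\Omega$-contribution $0$) or the term $\Omega_{t_e} - \Omega_{\langle e,0\rangle}$ can be telescoped against a summable series. Combined with the fact (analogous to Lemma~\ref{lem:lows-for-omega-approximate-Omega-badly}) that $\Omega$ is approximated only at its own natural rate, the resulting sum is bounded by a constant multiple of $\Omega$, as required. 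I expect this bookkeeping --- matching pairing growth against the (possibly slow) convergence of the right-to-left minima --- to be where the real work lies.
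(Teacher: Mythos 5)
Your construction of $A$ and the reduction $A\equiv_T\emptyset'$ are fine, and your identification of the contributing positions is essentially right: with a pairing monotone in both coordinates, the positions admitting an $n$-stage are exactly the $\langle e,0\rangle$ for $e$ a right-to-left minimum, charged at the true stage $t_e$ (with only your conditions (a) and (b) you still get the inclusion you need, which suffices for an upper bound). The gap is the final, and central, step: bounding $\sum\bigl(\Omega_{t_e}-\Omega_{\langle e,0\rangle}\bigr)$. Neither of your suggested mechanisms works. You cannot ``speed up'' the enumeration of $\emptyset'$ so that $t_e\le\langle e,0\rangle$ for almost all relevant $e$: if all but finitely many right-to-left minima $e$ entered by stage $\langle e,0\rangle$, then $\emptyset'$ would be computable (given $x$, run the enumeration to stage $\max\{t_0,\langle x,0\rangle\}$, where $t_0$ bounds the finitely many exceptions; any $y\le x$ entering later yields a right-to-left minimum $m\le x$ with $t_m>\langle m,0\rangle$, a contradiction). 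Note also that the formula you wrote, enumerating $e$ at stage $\max\{\text{true entry stage},\langle e,0\rangle\}$, is a delay, not a speed-up: it guarantees $t_e\ge\langle e,0\rangle$, i.e., it forces the case you were trying to avoid. So infinitely many positive terms $\Omega_{t_e}-\Omega_{\langle e,0\rangle}$ with $t_e$ a true stage are unavoidable, and no telescoping is available to control them: the left endpoints $\langle e,0\rangle$ are fixed computably in advance, while the right endpoints are true stages, which exceed every computable bound infinitely often, so the intervals $(\langle e,0\rangle,t_e]$ overlap arbitrarily badly. For large true stages the term is essentially $\Omega-\Omega_{\langle e,0\rangle}$, so summability would require your computable sequence $e\mapsto\langle e,0\rangle$ to approximate $\Omega$ to summable precision along an infinite set of $e$'s that you do not control; nothing in the proposal delivers this, and it is exactly the sort of demand that runs up against the noncomputability of $\Omega$.

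This is precisely where the paper's proof does something different: it charges not at positions fixed in advance but at \emph{movable markers}. When $n$ enters $\emptyset'$ at stage $s$, the marker $\gamma_s(n)$ is enumerated into $A$ and all markers $\gamma(m)$ for $m>n$ are reset above $s$. Consequently, if $s$ is a true ($k$-)stage, every number entering $A$ after $s$ exceeds $s$, so the intervals $[k,s)$ attached to the true stages are pairwise disjoint, and the weak total $\cost_\Omega$-cost telescopes to at most $\Omega$. Dynamically pushing the charged location up to (roughly) the current stage whenever smaller numbers act is the missing idea; with statically chosen positions $\langle e,0\rangle$ there is no analogous disjointness, and the bookkeeping you defer is not a technicality but the whole content of the proposition.
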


\begin{proof}
	We enumerate~$A$ as follows. For each $n\notin \emptyset'_s$, we have a marker $\gamma_s(n)\ge n$. If $n$ enters~$\emptyset'$ at stage~$s$, then we enumerate the marker $\gamma_s(n)$ into~$A_{s}$ and initialise all markers $\gamma_{s+1}(m)$ for $m>n$ to be greater than~$s$. A~$k$-stage is a ``true stage'' in the enumeration of~$A$, equivalently of~$\emptyset'$. If $s$ is a $k$-stage of the enumeration $\seq{A_s}$, then $k = \gamma_s(n)$ for some~$n$, and every number that enters~$A$ after stage~$s$ is greater than~$s$. Thus the set of intervals $I=\{ [k,s)\,:\, s\text{ is a $k$-stage of }\seq{A_s}\}$ is pairwise disjoint. The weak total $\cost_\Omega$-cost of this enumeration of $A$ is the sum of $\Omega_s-\Omega_k$, where $[k,s)$ is an interval in~$I$. Therefore, it is bounded by~$\Omega$. 
\end{proof}

Weak obedience is not very useful when we try to build our own sets. However it suffices for the following.

\begin{proposition} \label{prop:weak_obedience_and_computing}
	If~$A$ weakly obeys~$\cost$, then~$A$ is computable from any $A$-random sequence captured by a $\cost$-bounded test.
\end{proposition}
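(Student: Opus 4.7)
The plan is to build a Turing reduction of~$A$ from~$Z$ by the obvious attempt and then to convert any failure of the reduction into an $A$-Solovay test that captures~$Z$, contradicting $A$-randomness. We may assume throughout that~$A$ is not computable, since otherwise the conclusion is immediate. Fix a computable approximation $\seq{A_s}$ witnessing that~$A$ weakly obeys~$\cost$, and an enumeration $\seq{V_{n,s}}$ of the given $\cost$-bounded test satisfying $V_{n+1,s}\subseteq V_{n,s}$ and $\leb(V_{n,s})\leq^\times \cost(n,s)$.

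Take the candidate reduction to be $\Phi^Z(n):=A_s(n)$, where~$s$ is the least stage at which some prefix of~$Z$ is enumerated into~$V_n$; such an~$s$ exists because $Z\in V_n$. The goal is to show $\Phi^Z(n)=A(n)$ for cofinitely many~$n$. Assume for contradiction that the reduction fails at infinitely many~$n$; for each such~$n$, let $s^*_n$ be the witnessing stage. Then the approximation must still change at position~$n$ after stage~$s^*_n$. Let $t_n$ be the last stage at which some position $\le n$ changes in~$\seq{A_s}$, and let $m_n\le n$ be the least position that changes at stage~$t_n$. A direct check against \cref{def:weak_obedience} shows that~$t_n$ is the (unique) last $m_n$-stage; in the notation of that definition we write $t_n = s_{m_n}$. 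By monotonicity of the approximation and the nesting of the test,
\[
Z \in V_{n,s^*_n} \subseteq V_{n,t_n} \subseteq V_{m_n,s_{m_n}}.
\]

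Define $\tilde U_m:=V_{m,s_m}$ for each~$m$ that has an $m$-stage. Then $\seq{\tilde U_m}$ is uniformly $\Sigma^0_1(A)$, since~$A$ computes each~$s_m$, and weak obedience gives
\[
\sum_m \leb(\tilde U_m) \,\leq^\times\, \sum_m \cost(m,s_m) \,<\, \infty,
\]
so $\seq{\tilde U_m}$ is an $A$-Solovay test. The previous paragraph then yields $Z\in\tilde U_{m_n}$ at every failure point~$n$, and to finish it suffices to see that $\{m_n\}$ is infinite. This is short: $(t_n)$ is non-decreasing and unbounded (otherwise $A$ would stabilise and be computable); but $m_n\leq M$ would force $t_n \leq \max_{k\leq M} c_k$, where $c_k$ is the last change stage at position~$k$, giving a uniform upper bound on $t_n$ across all such $n$. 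Therefore only finitely many~$n$ can satisfy $m_n\le M$, whence $m_n\to\infty$, and $Z$ is captured by the $A$-Solovay test $\seq{\tilde U_m}$, contradicting the $A$-randomness of~$Z$.

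The step I expect to carry the technical weight is the identification of a failure point $(n,s^*_n)$ of the reduction with a last $m_n$-stage~$t_n=s_{m_n}$ for some~$m_n\le n$: this is precisely what allows the measure of the $A$-Solovay test to be controlled by the weak-obedience sum $\sum_m \cost(m,s_m)$ rather than by a sum taken over every change of the approximation, which would be needed for the standard notion of obedience. Once this bookkeeping is set up, the rest is the routine $A$-Solovay capture argument together with a one-line observation that non-computability of~$A$ forces $m_n\to\infty$.
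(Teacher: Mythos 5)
Your argument is essentially the paper's: the same induced reduction (evaluate the approximation at the stage where $Z$ enters $V_n$), the same $A$-Solovay test whose $m$-th component is $V_{m,s_m}$ for $s_m$ the last $m$-stage, with total measure controlled exactly by the weak total $\cost$-cost, and the same identification of a failure at $n$ with membership of $Z$ in the component indexed by the least position $m_n\le n$ that changes late (your verification that $t_n$ is the last $m_n$-stage, and the chain $Z\in V_{n,s^*_n}\subseteq V_{m_n,s_{m_n}}$, are correct). The only real difference is bookkeeping: you argue by contradiction and show $m_n\to\infty$ using noncomputability of $A$, whereas the paper fixes a stage $r$ after which $Z$ is never again enumerated into any component and shows the computation stabilises; both devices do the same job.

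There is one incorrect justification: you assert that $\seq{\tilde U_m}$ is uniformly $\Sigma^0_1(A)$ ``since $A$ computes each $s_m$''. In general $A$ cannot compute the last $m$-stage, nor even decide whether an $m$-stage exists; this is a settling-time (modulus of convergence) question, and only the existence of an $m$-stage is a $\Sigma^0_1(A)$ event. The conclusion you need is nevertheless true, for the reason the paper gives: being an $m$-stage \emph{is} recognisable with oracle $A$, because the clause $A_s\rest{m+1}=A\rest{m+1}$ in \cref{def:weak_obedience} can be checked against the oracle. So with oracle $A$ one enumerates $V_{m,t}$ into $\tilde U_m$ at every $m$-stage $t$ (and puts $\tilde U_m=\emptyset$ if no $m$-stage ever appears); since $V_{m,t}$ increases with $t$, this union equals $V_{m,s_m}$, so the family is uniformly $A$-c.e.\ and the measure bound you computed still applies. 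With this one-line repair, which is exactly how the paper defines its test $G_m$, your proof goes through.
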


\begin{proof}
	Let~$\seq{A_s}$ be an approximation witnessing that~$A$ weakly obeys~$\cost$; let $\seq{V_n}$ be a $\cost$-bounded test. Being an $n$-stage for the approximation is recognisable by~$A$. For $n<\w$, let $G_n = \emptyset$ if there is no~$n$-stage; otherwise, let $G_n = V_{n,s}$, where~$s$ is the last $n$-stage. In other words, $G_n = \bigcup V_{n,s}$ as $s$ ranges over all~$n$-stages. Then the sequence $\seq{G_n}$ is uniformly $A$-c.e. Since $\leb(V_{n,s}) \le \cost(n,s)$, the sequence $\seq{G_n}$ is an $A$-Solovay test. 

	Suppose that $Z\in \bigcap_n V_n$ is not captured by $\seq{G_n}$; let~$r$ be the last stage at which~$Z$ enters any~$G_n$. Suppose that $Z$ enters~$V_n$ at stage $s>r$; we claim that $A_s\rest{n+1} = A\rest{n+1}$, in fact that $A_t\rest{n+1} = A_s\rest{n+1}$ for all $t\ge s$. Let~$m$ be least such that for some $t>s$, $A_t(m)\ne A_{t-1}(m)$; let~$t$ be the last such stage. Then~$t$ is an $m$-stage. Since $t>r$, $Z\notin V_{m,t}$. Since $Z\in V_{n,t}$ and the sets $\seq{V_{k,t}}$ are nested, it must be that $m>n$. Hence~$Z$ computes~$A$. 
\end{proof}

\subsubsection*{A refinement} 
\label{ssub:a_refinement}

We will require a technical refinement. Let $\+{I} = \seq{i_0,i_1,\dots}$ be a strictly increasing computable sequence. Let~$\seq{A_s}$ be a computable approximation of a $\Delta^0_2$ set~$A$. An \emph{$\+{I}$-$n$-stage} of the approximation is a stage~$s$ at which: $A_s\rest{i_n} = A_{s-1}\rest{i_n}$, $A_s\rest{i_{n+1}} \ne A_{s-1}\rest{i_{n+1}}$, and $A_{s}\rest{i_{n+1}} = A\rest{i_{n+1}}$. If~$\cost$ is a cost function, then the \emph{total $\+{I}$-weak cost} of the approximation is the sum of all $\cost(n,s)$ where~$n$ is the last $\+{I}$-$n$-stage. The approximation witnesses that~$A$ \emph{$\+{I}$-weakly obeys} $\cost$ if the total $\+{I}$-weak cost of the approximation is finite. Weak obedience is $\+I$-weak obedience for $\+I$ being the identity sequence $i_n = n$. 

The proof of \cref{prop:weak_obedience_and_computing} gives its $\+{I}$-analogue: if~$A$ $\+{I}$-weakly obeys~$\cost$ then~$A$ is computable from any $A$-random set captured by a $\cost$-bounded test. (Equivalently, we could generalise the theory of obedience and weak obedience to computably bounded elements of Baire space.)

\begin{lemma} \label{lem:I-weak_obedience_of_k_n_implies_bases}
	Let~$\+I$ be an increasing computable sequence. Suppose that~$A$ is $K$-trivial and $\+I$-weakly obeys $\cost_{\Omega,k/n}$. Then~$A$ is a $k/n$-base; in fact, the $n$-columns of~$\Omega$ witness that~$A$ is a $k/n$-base.
\end{lemma}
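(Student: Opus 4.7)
The plan is to fix an arbitrary $F\subseteq\{1,2,\dots,n\}$ of size~$k$ and show that $A\leT \Omega_F$; since this holds for every such~$F$, the tuple $(\bar\Omega_1,\dots,\bar\Omega_n)$ will then witness that~$A$ is a $k/n$-base. The engine driving the argument is the $\+I$-analogue of \cref{prop:weak_obedience_and_computing} announced in the refinement subsection: if $A$ $\+I$-weakly obeys a cost function~$\cost$, then $A$ is computable from every $A$-random sequence captured by a $\cost$-bounded test. Applied with $\cost = \cost_{\Omega,k/n}$ and the sequence $\Omega_F$, this will yield $A\leT \Omega_F$ as soon as the two hypotheses on $\Omega_F$ are verified.

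The first hypothesis is immediate from \cref{prop:covering_columns_of_Omega}, which has already produced a $\cost_{\Omega,k/n}$-bounded test capturing~$\Omega_F$. The second hypothesis---that $\Omega_F$ is ML-random relative to~$A$---is where the assumption that~$A$ is $K$-trivial gets used. Since $K$-triviality coincides with lowness for ML-randomness, $\Omega$ itself is ML-random relative to~$A$. The canonical bit-distributing isomorphism $j_n$ between~$2^\w$ and~$(2^\w)^n$ is computable and measure-preserving, so the full tuple $(\bar\Omega_1,\dots,\bar\Omega_n)$ is $A$-random in the product space $(2^\w)^n$. Iterating van Lambalgen's theorem relative to~$A$ across the coordinates, any marginal $\Omega_F$ onto a subset of the columns is also $A$-random in $(2^\w)^F$, which is what we need.

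I do not expect a genuine obstacle: the lemma is essentially an assembly of tools already in place. The only point worth writing out with some care is the $\+I$-analogue of \cref{prop:weak_obedience_and_computing}, but the authors have already indicated that its proof is a straightforward adaptation of the proof given for \cref{prop:weak_obedience_and_computing}---one replaces $n$-stages by $\+I$-$n$-stages throughout the construction of the $A$-Solovay test $\seq{G_n}$, and replaces the cutoff $A\rest{n+1}$ by $A\rest{i_{n+1}}$ in the final verification that a non-captured $Z\in\bigcap_n V_n$ computes~$A$. No new idea beyond the combination of \cref{prop:covering_columns_of_Omega}, relativised van Lambalgen, and the $\+I$-weak obedience transfer is needed.
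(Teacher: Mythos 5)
Your proposal is correct and follows essentially the same route as the paper's own proof: combine \cref{prop:covering_columns_of_Omega} with the $\+I$-analogue of \cref{prop:weak_obedience_and_computing}, using $K$-triviality (lowness for randomness) to see that each $k$-tuple of $n$-columns of~$\Omega$ is $A$-random. The only cosmetic difference is that you route the $A$-randomness of $\Omega_F$ through an iterated relativised van Lambalgen argument, whereas the paper simply notes that lowness for randomness makes every such tuple $A$-random; both are fine.
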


\begin{proof}
	Since~$A$ is low for random, any $k$-tuple of distinct $n$-columns of~$\Omega$ is $A$-random. So \cref{prop:covering_columns_of_Omega} and the $\+I$-version of \cref{prop:weak_obedience_and_computing} show that any such join computes~$A$. 
\end{proof}



\subsection{The proof of Theorem~\ref{thm:k_n_base}} 
\label{sub:the_proof_of_cref_thm_k_n_base}

The two main results that we show later are the following. The first was mentioned above.

\begin{proposition} \label{prop:ce_k_n_bases}
	Every c.e.\ $k/n$-base obeys $\cost_{\Omega,k/n}$. 
\end{proposition}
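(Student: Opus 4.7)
My plan is to prove the contrapositive: assuming $A$ is c.e., enumerated as $\seq{A_s}$, and the total $\cost_{\Omega,k/n}$-cost of the enumeration is infinite, I will derive that $A$ cannot be a $k/n$-base.  I will assume toward contradiction that a random tuple $Z = (Z_1,\dots,Z_n)\in(2^\w)^n$ and reductions $\Phi^F\colon Z_F \to A$ (one for each $F\in\binom{[n]}{k}$) witness $A$ as a $k/n$-base, and construct a Solovay test in $(2^\w)^n$ that captures $Z$, contradicting its randomness.

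For each $x$ entering $A$ at stage $s_x$, I set $\alpha_x = \Omega_{s_x} - \Omega_x$ and place $x$ in the dyadic band $I_m$ when $\alpha_x \in [2^{-m-1},2^{-m})$.  The divergence hypothesis gives $\sum_m |I_m|\cdot 2^{-mk/n} = \infty$, so $I_m\ne\emptyset$ for infinitely many $m$.  On the other hand, $K$-triviality of $A$ (satisfied by every $k/n$-base, as noted in the introduction) yields $\sum_m |I_m|\cdot 2^{-m} = \sum_x \alpha_x < \infty$, with this sum controlled by the total $\cost_\Omega$-cost of the enumeration.  For each $x\in I_m$ and each $F$, I will build effectively from the enumeration a c.e.\ open set $C^F_x \sub (2^\w)^F$ with $Z_F\in C^F_x$ and $\mu(C^F_x) \le^\times 2^{-mk/n}$; the building blocks will be the use-cylinders $[\tau]$ in $(2^\w)^F$ on which $\Phi^F_{s_x}(\tau;x){\downarrow} = A_{s_x-1}(x)\ne A(x)$.

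Given such $C^F_x$'s, I set $B_x = \bigcap_F \pi_F^{-1}(C^F_x) \sub (2^\w)^n$, so that $Z\in B_x$ by construction and $\pi_F(B_x) \sub C^F_x$.  The Loomis--Whitney inequality then gives
\[
  \mu(B_x) \le \prod_F \mu(\pi_F(B_x))^{1/\binom{n-1}{k-1}} \le \prod_F \mu(C^F_x)^{1/\binom{n-1}{k-1}} \le^\times 2^{-m},
\]
using that $\binom{n}{k}/\binom{n-1}{k-1} = n/k$.  Letting $V_m = \bigcup_{x\in I_m} B_x$ gives $Z\in V_m$ whenever $I_m\ne\emptyset$, and $\mu(V_m) \le^\times |I_m|\cdot 2^{-m}$, so $\sum_m \mu(V_m) \le^\times \sum_x \alpha_x < \infty$.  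Hence $\seq{V_m}$ is a Solovay test in $(2^\w)^n$ that captures $Z$ at infinitely many levels, contradicting the randomness of $Z$.

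The main obstacle will be the calibration of the $C^F_x$'s: specifying, effectively in the enumeration, a c.e.\ open set of target measure $\le^\times 2^{-mk/n}$ that is genuinely guaranteed to contain $Z_F$.  When the natural stage-$s_x$ use $\tau^F_x$ of $\Phi^F$ on $x$ already has $\mu([\tau^F_x]) \le 2^{-mk/n}$ I can take $C^F_x = [\tau^F_x]$ directly, but when it is shorter the single cylinder overshoots the target.  My strategy will be a delay mechanism exploiting the c.e.-ness of $A$: postpone formally enumerating $x$ into $A$ until the $\Omega$-approximation has moved sufficiently that the natural use length and the target length are comparable.  This delay is safe because $A$ is c.e.\ and $\Omega$ is left-c.e., so both quantities only grow.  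Coordinating these delays across all $\binom{n}{k}$ choices of $F$ simultaneously, while maintaining $Z_F\in C^F_x$ and the total-cost bound, will be the crux, and is where \cref{lem:lows-for-omega-approximate-Omega-badly}, bounding $K$-trivial speed-ups of the $\Omega$-approximation, plays a key role.
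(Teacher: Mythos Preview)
Your plan has a fatal error in the definition of $C^F_x$. You want $Z_F\in C^F_x$, yet you build $C^F_x$ from cylinders $[\tau]$ on which $\Phi^F_{s_x}(\tau;x)\converge = A_{s_x-1}(x)=0\ne A(x)$. But $\Phi^F(Z_F)=A$, and Turing computations are permanent once they converge, so $\Phi^F(Z_F;x)$ can only ever take the value $A(x)=1$; hence $Z_F$ lies in none of those cylinders. The set of oracles giving the \emph{wrong} answer is the one whose measure you might hope to calibrate by delaying enumeration, but it does not contain $Z_F$; the set of oracles giving the \emph{correct} answer contains $Z_F$ but carries no useful measure bound. Your Solovay test therefore fails to capture $Z$, and the argument collapses. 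There is also a logical slip in the contrapositive: obeying $\cost_{\Omega,k/n}$ means \emph{some} enumeration has finite total cost, so assuming one fixed enumeration has infinite cost does not negate it. What one actually proves is the direct statement: given any enumeration, produce a speed-up with finite cost.

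The paper's argument has a quite different shape. Rather than trying to cover $Z$ outright, it runs, for each dyadic $\epsilon>0$, a ``hungry sets'' construction: sets $G_{\tau}\subseteq\bigcap_F\pi_F^{-1}\bigl(\Psi_F^{-1}[\tau]\bigr)$ attempt to \emph{confirm} each $\tau\prec A$ by accumulating exactly $\epsilon(\Omega_{|\tau|}-\Omega_{|\tau|-1})$ of mass inside the effectively closed product-type class $P=\bigcap_F\pi_F^{-1}[P_F]$, where $P_F$ is the set of oracles not mapping strictly left of $A$. If every $\epsilon$-construction leaves some $A\rest n$ unconfirmed, then $Z$ is captured by an $A$-\emph{difference} test $\seq{P\cap G_\epsilon}$; a density argument (\cref{lem:density_in_F_product_classes-minimal_witnesses}, which requires choosing reductions through \emph{minimal} $F$) shows $\density{P}{Z}=1$ and rules this out. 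For the successful $\epsilon$, confirmation stages yield a speed-up $\seq{A_{s_k}}$; the Loomis--Whitney inequality is applied in the opposite direction from yours (\cref{lem:geometric}) to show that some projection $\pi_F[V_k]$ of each confirmed block is large, and disjointness of these projections---coming from the left-c.e.\ structure of $P_F$---bounds the total cost by $|\+F|/\epsilon^{1/\norm{F}}$. Your proposed invocation of \cref{lem:lows-for-omega-approximate-Omega-badly} plays no role in this argument; that lemma is used only for the downward closure of $\+B_p$ under $\le_\Tur$.
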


\begin{proposition} \label{prop:general_k_n_base}
	Every $k/n$-base weakly obeys $\cost_{\Omega,k/n}$. In fact, if~$\seq{A_s}$ is any computable approximation of a $k/n$-base~$A$, then there is a sub-approximation $\seq{A_{s(n)}}$ which witnesses that~$A$ weakly obeys $\cost_{\Omega,k/n}$. 
\end{proposition}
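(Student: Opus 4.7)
Let $A$ be a $k/n$-base witnessed by a random tuple $Z = (Z_1,\dots, Z_n)$, and fix reductions $\Phi_F$ with $\Phi_F(Z_F) = A$ for each $F \subseteq \{1,\dots, n\}$ of size $k$. The plan is to exploit a generalised Loomis--Whitney inequality (the Bollob\'as--Thomason uniform cover inequality) in combination with a carefully designed computable sub-approximation of $A$.

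Given the approximation $\seq{A_t}$, define $s(j)$ computably as the least stage greater than $s(j-1)$ at which, for each $F$ of size $k$, a convergent computation $\Phi_{F,s(j)}(\sigma_F) \succeq A_{s(j)}\rest{j+1}$ exists for some $\sigma_F$. Such $s(j)$ is well-defined since $\Phi_F(Z_F) = A$ eventually converges on any initial segment and $\seq{A_t}$ eventually agrees with $A$ on $\rest{j+1}$. Set $B_j = A_{s(j)}$. For each $m$ having a last $m$-stage $j$ in $\seq{B_j}$, let $\tau_m = B_{j-1}\rest{m+1}$ be the discarded string; it has length $m+1$ and differs from $A$ at position $m$. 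A direct analysis of the approximation structure shows that last $m$-stages with larger $m$ occur at strictly later sub-stages (otherwise some position below $m$ would change after its last stage), and consequently the discarded strings $\{\tau_m\}$ form an antichain in $2^{<\omega}$: any two of them with $m<m'$ disagree at position $m$.

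For each discarded $\tau$, define
\[
L_\tau = \{W \in (2^\omega)^n : \forall F,\ \Phi_F(W_F)\succeq \tau\}.
\]
Each projection $\pi_F(L_\tau)$ is contained in $\{W_F : \Phi_F(W_F) \succeq \tau\}$, and the antichain property ensures these projections are pairwise disjoint across $\tau$; hence $\sum_\tau \mu(\pi_F(L_\tau)) \leq 1$ for each $F$. The Bollob\'as--Thomason uniform cover inequality applied with the cover of $\{1,\dots,n\}$ by its $k$-subsets (each index belonging to $\binom{n-1}{k-1}$ of them) yields
\[
\mu(L_\tau)^{\binom{n-1}{k-1}} \;\leq\; \prod_{F} \mu(\pi_F(L_\tau)),
\]
and since $\binom{n-1}{k-1}/\binom{n}{k} = k/n$, AM--GM gives
\[
\mu(L_\tau)^{k/n} \;\leq\; \frac{1}{\binom{n}{k}} \sum_{F} \mu(\pi_F(L_\tau)).
\]
Summing over all discarded $\tau$ yields $\sum_\tau \mu(L_\tau)^{k/n} \leq 1$.

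The final step is to show that each cost term $(\Omega_{s(j)}-\Omega_m)^{k/n}$ is dominated by a constant multiple of $\mu(L_{\tau_m})^{k/n}$. The sub-approximation was engineered precisely for this: at sub-stage $j-1$, for each $F$, a witnessing computation $\Phi_{F,s(j-1)}(\sigma_F)\succeq \tau_m$ has appeared, so $L_{\tau_m}$ contains a product of use-cylinders whose measure captures the $\Omega$-mass accumulated between stages $m$ and $s(j)$. \Cref{lem:lows-for-omega-approximate-Omega-badly} is invoked here: even though the uses $|\sigma_F|$ are only $Z$-computable, $\Omega$ cannot be sped up by more than a multiplicative constant via $Z$, so the inequality $\mu(L_{\tau_m}) \geq c\cdot(\Omega_{s(j)}-\Omega_m)$ can be secured. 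I expect this final calibration between $\Omega$-increments and the measure of $L_\tau$-sets to be the main obstacle: the sub-approximation must be arranged so that each discarded string is witnessed by fresh computational mass accurately reflecting the $\Omega$-Solovay structure underlying the $k/n$-base hypothesis.
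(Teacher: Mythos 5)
There is a genuine gap, and it sits exactly where you flagged ``the main obstacle'': the lower bound $\mu(L_{\tau_m})\ge c\,(\Omega_{s(j)}-\Omega_m)$ is not something you can ``secure'' by choosing the sub-approximation, and it is false in general. The set $L_\tau=\{W:\forall F\ \Phi_F(W_F)\succeq\tau\}$ is completely at the mercy of the given functionals: since each discarded $\tau_m$ is \emph{not} an initial segment of $A$, the witness $Z$ does not lie in $L_{\tau_m}$, and nothing forces any oracle mass at all to map onto a wrong string (the $\Phi_F$ can be chosen so that $L_{\tau_m}$ is empty or has arbitrarily small measure). Your stage selection only guarantees that at stage $s(j-1)$ there exist strings $\sigma_F$ with $\Phi_{F,s(j-1)}(\sigma_F)\succeq\tau_m$; this gives, for each $F$ separately, a cylinder of measure $2^{-|\sigma_F|}$ inside $\{Y:\Phi_F(Y)\succeq\tau_m\}$, but the uses $|\sigma_F|$ bear no relation to $\Omega_{s(j)}-\Omega_m$, and for $k\ge 2$ the various $\sigma_F$ constrain overlapping coordinates and need not be consistent, so they need not contribute anything to $L_{\tau_m}$ itself. \cref{lem:lows-for-omega-approximate-Omega-badly} cannot repair this: it concerns $B$-computable speedups of the approximation to $\Omega$ (and is used in the paper only for downward closure, \cref{prop:obedience_of_our_cost_functions_is_downwards_closed}); it says nothing about the measure of $\Phi_F$-preimages of wrong strings. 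Note also that your argument never uses that $Z$ is random relative to $A$, which is the only hypothesis that could possibly produce a lower bound on oracle mass -- a sign that a key ingredient is missing.

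The paper's proof has the same back end as yours (discarded strings form an antichain, their preimage-type sets have pairwise disjoint $F$-projections, and the generalised Loomis--Whitney inequality converts a lower bound on the measure of the charged set into a lower bound on some projection, summing to a constant), but it does \emph{not} charge against the raw classes $L_\tau$. Instead it runs, for each dyadic $\epsilon$, a hungry-sets construction: clopen sets $G_{\tau}\subseteq\bigcap_F\pi_F^{-1}\bigl[\Psi_F^{-1}[\tau]\bigr]$ are filled only as matching oracle mass appears, with the explicit target $\epsilon\cdot(\Omega_{|\tau|}-\Omega_{|\tau|-1})$, and $\tau$ is \emph{confirmed} when the target is met; the sub-approximation samples the stages at which $A_{s_k}\rest{(k+1)}$ is confirmed. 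Then, when a confirmed string is discarded, the union of full hungry sets along it has measure exactly $\epsilon\cdot(\Omega_{k(n)}-\Omega_n)$ \emph{by construction} -- this is the calibration your $L_{\tau_m}$ lacks. The case in which, for every $\epsilon$, some initial segment of $A$ is never confirmed is ruled out by a separate dichotomy: the sets $\bigcup_{\tau\prec A}G_\tau$ over $\epsilon=2^{-n}$ form an $A$-ML test capturing $Z$, contradicting that $Z$ is $A$-random (as $A$, being a $k/n$-base, is $K$-trivial, hence low for random). Your proposal needs an analogue of this confirmation mechanism and dichotomy; without it the quantitative step connecting $\Omega$-increments to oracle measure has no support.
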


These suffice to give a proof of \cref{thm:k_n_base}. To prove that every $k/n$-base obeys $\cost_{\Omega,k/n}$ we will show that each $k/n$-base is bounded by a c.e.\ $k/n$-base, and use \cref{prop:ce_k_n_bases,prop:obedience_of_our_cost_functions_is_downwards_closed}.
 
\begin{theorem} \label{thm:more_on_k_n_bases}
	Let $1\le k < n$. The following are equivalent for a set~$A$:
	\begin{enumerate}
		\item $A$ is a $k/n$-base;
		\item The $n$-columns of $\Omega$ witness that~$A$ is a $k/n$-base;
		\item $A$ obeys $\cost_{\Omega,k/n}$;
		\item $A$ is $K$-trivial and weakly obeys $\cost_{\Omega,k/n}$.
	\end{enumerate}
\end{theorem}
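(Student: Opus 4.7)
The plan is to establish the cyclic implications $(2)\Rightarrow(1)\Rightarrow(4)\Rightarrow(2)$ together with $(3)\Rightarrow(4)$ and $(1)\Rightarrow(3)$. The first four go quickly from the machinery already assembled. $(2)\Rightarrow(1)$ is immediate from \cref{def:k/n_base_2}. For $(3)\Rightarrow(4)$: since $\Omega_s-\Omega_x\leq 1$ and $k/n\leq 1$, we have $(\Omega_s-\Omega_x)^{k/n}\geq\Omega_s-\Omega_x$, so $\cost_{\Omega,k/n}$ pointwise dominates $\cost_{\Omega,1}=\cost_\Omega$; obedience to $\cost_{\Omega,k/n}$ therefore forces $K$-triviality and trivially implies weak obedience. $(1)\Rightarrow(4)$ combines the van Lambalgen argument spelled out in the introduction (giving $K$-triviality of every $k/n$-base) with \cref{prop:general_k_n_base} (giving weak obedience to $\cost_{\Omega,k/n}$). $(4)\Rightarrow(2)$ is \cref{lem:I-weak_obedience_of_k_n_implies_bases} applied with $\+{I}$ the identity sequence.

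The remaining implication $(1)\Rightarrow(3)$ follows the outline flagged in the paragraph just before the statement: exhibit a c.e.\ $k/n$-base $B$ with $A\leT B$; then \cref{prop:ce_k_n_bases} yields $B\models\cost_{\Omega,k/n}$, and \cref{prop:obedience_of_our_cost_functions_is_downwards_closed} upgrades this to $A\models\cost_{\Omega,k/n}$. At this point the equivalence of (1), (2), (4) is already established, so $A$ is $K$-trivial and admits a sub-approximation $\seq{A_{s(n)}}$ witnessing weak obedience to $\cost_{\Omega,k/n}$ (the refined conclusion of \cref{prop:general_k_n_base}), and the $n$-columns of $\Omega$ witness that $A$ is a $k/n$-base. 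The idea is to run a standard cost-function permitting construction driven by $\seq{A_{s(n)}}$: enumerate markers into $B$ so that charges against $B$ telescope into the finite weak total cost. The resulting c.e.\ $B\geT A$ has finite total $\cost_{\Omega,k/n}$-cost, and then \cref{prop:covering_columns_of_Omega} combined with \cref{prop:weak_obedience_and_computing} makes $B$ computable from every $k$-tuple of $n$-columns of $\Omega$, so $B$ is indeed a c.e.\ $k/n$-base.

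The main obstacle is the technical mismatch between the \emph{weak} total cost (which is finite for $A$'s sub-approximation) and the \emph{total} cost (which must be finite for $B$'s c.e.\ enumeration to witness obedience). A naive marker enumeration at every change of the sub-approximation risks paying the full cost of every intermediate change, which the weak budget cannot absorb. The standard remedy is a delayed-enumeration strategy, in which a marker tentatively associated to bit $n$ is relocated upward each time a fresh $n$-stage is observed, so that only markers surviving the last $n$-stage ultimately figure in the accounting; c.e.-ness is preserved because relocation never removes elements from $B$. Verifying that the bookkeeping telescopes correctly, and that $A$ can still be recovered from $B$ via the final stable values of these markers, is the standard crux of the argument.
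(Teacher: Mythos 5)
Your cycle $(2)\Rightarrow(1)$, $(3)\Rightarrow(4)$, $(1)\Rightarrow(4)$, $(4)\Rightarrow(2)$ is fine and matches the paper's treatment of those implications. The gap is in $(1)\Rightarrow(3)$, specifically in the claim that a ``standard cost-function permitting construction'' with delayed/relocated markers turns the finite \emph{weak} total cost of the sub-approximation $\seq{A_{s(n)}}$ into a c.e.\ set $B\geT A$ of finite \emph{total} $\cost_{\Omega,k/n}$-cost. First, the instruction ``relocate the marker each time a fresh $n$-stage is observed'' is not effective: being an $n$-stage requires $A_s\rest{n+1}=A\rest{n+1}$ (agreement with the true $A$), which no computable enumeration can recognise; if you instead act on every apparent change, the enumerations at non-final $n$-stages are not covered by the weak budget. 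Second, and more fundamentally, the telescoping you invoke is exactly what fails for exponent $k/n<1$: consecutive marker appointments give \emph{disjoint} intervals $[t,s)$, and while $\sum(\Omega_s-\Omega_t)\le\Omega$, the sum of $(\Omega_s-\Omega_t)^{k/n}$ over disjoint intervals can diverge --- this is precisely why $\cost_{\Omega,k/n}$ is strictly more demanding than $\cost_\Omega$ and why $\+B_{k/n}$ is a proper subideal. Indeed, a purely combinatorial conversion of weak obedience into a fully obeying c.e.\ cover (your construction uses nothing beyond weak obedience of the given approximation) would contradict \cref{prop:weak_does_not_imply_strong_obedience}: applied to $\cost_\Omega$ it would place a K-trivial c.e.\ set above a set Turing equivalent to $\emptyset'$. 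Your write-up is also internally inconsistent: if $B$ really had finite total $\cost_{\Omega,k/n}$-cost you would be done by \cref{prop:obedience_of_our_cost_functions_is_downwards_closed}, and your subsequent appeal to \cref{prop:ce_k_n_bases} would be redundant.

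The paper's route avoids claiming any full obedience for the c.e.\ cover by construction. Since $A$ is K-trivial, one starts from an approximation fully obeying $\cost_\Omega$; the sub-approximation from \cref{prop:general_k_n_base} then still fully obeys $\cost_\Omega$, hence is $\omega$-computably approximable with a computable change bound $h$, and one forms the \emph{change set} $C$ along the interval structure $\+I=\seq{i_n}$, $i_n=\sum_{m\le n}h(m)$. This $C$ is c.e.\ and computes $A$; its full $\cost_\Omega$-cost telescopes (so $C$ is K-trivial, which is what makes the $k$-tuples of columns of $\Omega$ $C$-random), and its $\+I$-weak $\cost_{\Omega,k/n}$-cost equals the weak cost of $A$'s sub-approximation --- note that the $\+I$-refinement of \cref{def:weak_obedience} is needed here because the change-set positions live in intervals, another point your sketch glosses over. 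Only then do \cref{lem:I-weak_obedience_of_k_n_implies_bases} (giving that $C$ is a $k/n$-base) and the substantive geometric result \cref{prop:ce_k_n_bases} yield that $C$ fully obeys $\cost_{\Omega,k/n}$, after which \cref{prop:obedience_of_our_cost_functions_is_downwards_closed} transfers this to $A$. In short: the upgrade from weak to full obedience is purchased by the analytic content of \cref{prop:ce_k_n_bases} applied to a c.e.\ $k/n$-base, not by marker bookkeeping, and no bookkeeping scheme of the kind you describe can supply it.
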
 

\begin{proof}
	(2)$\then$(1) is immediate. (3)$\then$(4) holds because obedience implies weak obedience; and if $A$ obeys $\cost_{\Omega,p}$, then it obeys $\cost_\Omega$ and so is $K$-trivial. (4)$\then$(2) follows from \cref{lem:I-weak_obedience_of_k_n_implies_bases}.

	It remains to show that~(1) implies~(3). Suppose that~$A$ is a $k/n$-base. As mentioned in the introduction, we know that~$A$ is $K$-trivial~\cite{Hirschfeldt.Jockusch.ea:15}; let $\seq{\bar A_t}$ be an approximation that witnesses that~$A$ obeys~$\cost_{\Omega}$. By \cref{prop:general_k_n_base}, there is a sub-approximation $\seq{A_s} = \seq{\bar A_{t(s)}}$ of~$A$ that witnesses that~$A$ weakly obeys $\cost_{\Omega,k/n}$. This sub-approximation also witnesses that~$A$ (fully) obeys $\cost_\Omega$. As a consequence, the approximation $\seq{A_s}$ is an $\omega$-computable approximation (\cite[Fact 2.12]{Nies:CalculusOfCostFunctions}): there is a computable function~$h$ bounding the number of changes of $A_s(n)$. Using $h$, we can devise a ``reasonable'' \emph{change-set}~$C$ for the approximation $\seq{A_s}$. Let $i_n = \sum_{m\le n} h(m)$ (and let $\+{I}= \seq{i_n}$). We define the c.e.\ set~$C$ as follows: if $A_s(n)\ne A_{s-1}(n)$ and~$s$ is the $j\tth$ stage at which we saw a change in this approximation, then we enumerate $i_{n-1}+j$ into~$C_s$. Then:
	\begin{itemize}
		\item the (full) total $\cost_\Omega$-cost of the enumeration $\seq{C_s}$ is bounded by the total $\cost_\Omega$-cost of the approximation $\seq{A_s}$, and so~$C$ is $K$-trivial;
		\item the $\+{I}$-weak total $\cost_{\Omega, k/n}$-cost of the enumeration $\seq{C_s}$ equals the (normal) weak total $\cost_{\Omega, k/n}$-cost of the approximation $\seq{A_s}$, and so~$C$ $\+{I}$-weakly obeys $\cost_{\Omega, k/n}$.
	\end{itemize}
	By~\cref{lem:I-weak_obedience_of_k_n_implies_bases}, $C$ is a $k/n$-base. Since~$C$ is c.e., \cref{prop:ce_k_n_bases} says that~$C$ fully obeys $\cost_{\Omega,k/n}$. By \cref{prop:obedience_of_our_cost_functions_is_downwards_closed}, $A$ also obeys this cost function. 
\end{proof}


\section{\texorpdfstring{$1/2$}{1/2}-bases and ravenous sets}

We first prove \cref{prop:ce_k_n_bases,prop:general_k_n_base} for the special case $k=1$ and $n=2$. This allows us to describe the dynamics of the construction while suppressing the geometric considerations that appear in the general case. 

\subsection{Adapting the hungry sets construction} 
\label{sub:adapting_hungry_sets}

The proofs of \cref{prop:ce_k_n_bases,prop:general_k_n_base} are inspired by the ``hungry sets'' construction from \cite{HirschfeldtNiesStephan:UsingRandomSetsAsOracles}, which was used to show that every set that is a base for randomness is $K$-trivial (or low for~$K$). That argument can be transformed into a direct argument showing that every c.e.\ set~$A$ that is a base for randomness obeys the cost function $\cost_{\Omega}$. It may be instructive to sketch that argument. 

\begin{proof}[Sketch of a hungry sets construction for cost function obedience]
Let $\seq{A_s}$ be an enumeration of a c.e.\ set~$A$ (what we use is the fact that it is an increasing approximation of a left-c.e.\ real). Suppose that~$Z$ is an $A$-random that computes~$A$; let~$\Psi$ be a functional such that $\Psi(Z)=A$. 

We have a separate ``$\epsilon$-construction'' for every dyadic rational $\epsilon>0$. One of these constructions will give us the speed-up of the enumeration of~$A$ that witnesses that~$A$ obeys the cost function~$\cost_\Omega$. If an $\epsilon$-construction fails to do so, then it produces an $A$-effectively open set $U_\epsilon$ of measure at most~$\epsilon$ that contains~$Z$; so if every such construction fails, we can build an $A$-Solovay test capturing~$Z$. 

Fix $\epsilon>0$. For each string~$\tau$ we define an open set~$G_\tau$. This is the ``hungry set'' used to certify~$\tau$. It is a subset of $\Psi^{-1}[\tau] = \{X\in 2^\w\,:\, \Psi(X)\succeq \tau\}$. We require that the sets~$G_\tau$ be pairwise disjoint. The set~$G_\tau$ is satiated if its measure is precisely $\epsilon \cdot \left(\Omega_{|\tau|+1} - \Omega_{|\tau|}\right)$; this is the ``goal'' for $G_\tau$. When the set reaches its goal, we declare $\tau$ to be \emph{confirmed}. If $\tau'$ is an immediate successor of~$\tau$, we start filling the set~$G_{\tau'}$ only after~$\tau$ is confirmed; we fill $G_{\tau'}$ by clopen subsets of $\Psi^{-1}[\tau']$ disjoint from $\bigcup_{\s\preceq \tau}G_\s$.

Let $U_\epsilon = \bigcup_{n<\w} G_{A\rest n}$. Then $\leb(U_\epsilon) \le \sum \epsilon\cdot (\Omega_{n+1}-\Omega_n) \le \epsilon \cdot \Omega\le \epsilon$. If there is an~$n$ such that~$A\rest n$ is never confirmed, then $Z\in U_\epsilon$ because $\Psi^{-1}[A\rest n] \subseteq \bigcup_{m\le n} G_{A\rest{m}}$. 

Suppose that every initial segment of~$A$ is confirmed at some stage. Define an increasing sequence $s_0< s_1 < s_2 < \cdots$ of stages such that at stage~$s_n$ the string $A_{s_n}\rest{n}$ is confirmed. We claim that the total $\cost_\Omega$ cost of the approximation $\seq{A_{s_n}}$ is finite. Let $n\ge 0$ and consider the cost paid at stage~$s_{n+1}$; it is $\Omega_{n+1} - \Omega_k$ where~$k$ is the least such that $A_{s_{n+1}}(k)\ne A_{s_{n}}(k)$. Consider the set $V_n = \bigcup_{m\in (k,n]} G_{A_{s_{n}}\rest m}$. The sets $G_{A_{s_n}\rest m}$ are full and pairwise disjoint, so $\leb(V_n) = \epsilon\cdot(\Omega_{n+1}-\Omega_k)$. And further, because the approximation is left-c.e., the sets~$V_n$ are pairwise disjoint, so the total $\cost_\Omega$ cost is bounded by $(1/\epsilon)\cdot\leb(\bigcup_n V_n)$, which is of course finite. 
\end{proof}

Now consider a $1/2$-base~$A$ witnessed by a pair~$Z_1$ and~$Z_2$; say $\Psi_i(Z_i) = A$ for $i=1,2$. We attempt a similar construction. Again fixing~$\epsilon$, as before the aim is to certify strings~$\tau$ by filling hungry sets~$G_\tau$. If certification does not happen then we want to capture the pair $(Z_1,Z_2)$ by $U = \bigcup_{\tau\prec A}G_\tau$, so certification happens by letting $G_\tau\subseteq \Psi_1^{-1}[\tau]\times \Psi_2^{-1}[\tau]$. The main idea is that if a string~$\tau$ is certified with weight $\delta = \epsilon \cdot \left(\Omega_{|\tau|+1}-\Omega_{|\tau|}\right)$ and is then discovered to be wrong, then either the projection $\pi_1[G_\tau]$ to the first coordinate or its projection $\pi_2[G_\tau]$ to the second coordinate has to have measure at least $\sqrt{\delta}$: $G_\tau$ is contained in the product of the two projections. So we plan to charge the $\cost_{\Omega,1/2}$-cost of the change away from~$\tau$ to one of these projections. 

The problem is that while we can keep the hungry sets $G_\tau$ pairwise disjoint, this is not true of their projections. Consider \cref{fig:overlapping_projections}. The two rectangles represent $G_\tau$ and~$G_{\tau'}$ for an extension~$\tau'$ of~$\tau$. If we required $\pi_2[G_{\tau'}]$ to be disjoint from $\pi_2[G_\tau]$ then we would not be able to capture the pair $(Z_1,Z_2)$ in either $G_\tau$ or~$G_{\tau'}$.

\begin{figure}[h]
\centering
\begin{tikzpicture}
	[vertex/.style={circle,fill,inner sep=0pt,minimum size=2.5pt}]

\def\tauTop{3.8}
\def\tauBottom{1}
\def\tauLeft{1}
\def\tauRight{3.4}

\def\sigmaTop{5}
\def\sigmaBottom{2}
\def\sigmaRight{5.7}
\def\sigmaLeft{2.2}

\def\XAxisLength{7}
\def\YAxisLength{6}

\def\ZZ{4.4}
\def\ZO{3}

\def\NotchLength{0.14}

\coordinate (z0) at (\ZZ,0);
\coordinate (z1) at (0,\ZO);

\draw[help lines,->] (0,0) -- (0,\YAxisLength);
\draw[help lines,->] (0,0) -- (\XAxisLength,0);

\draw (\sigmaLeft,\sigmaBottom) rectangle (\sigmaRight,\sigmaTop);
\draw[fill=white] (\tauLeft,\tauBottom) rectangle (\tauRight,\tauTop);

\draw (z0) -- ($ (z0) - (0,\NotchLength) $) node [anchor=north] {$\scriptstyle Z_1$};
\draw (z1) -- ($ (z1) - (\NotchLength,0) $) node [anchor=east] {$\scriptstyle Z_2$};

\node [vertex,label=315:{$\scriptstyle (Z_1,Z_2)$}] (z) at (\ZZ,\ZO) {};
\end{tikzpicture}
\caption{Overlapping projections of hungry sets}
\label{fig:overlapping_projections}
\end{figure}
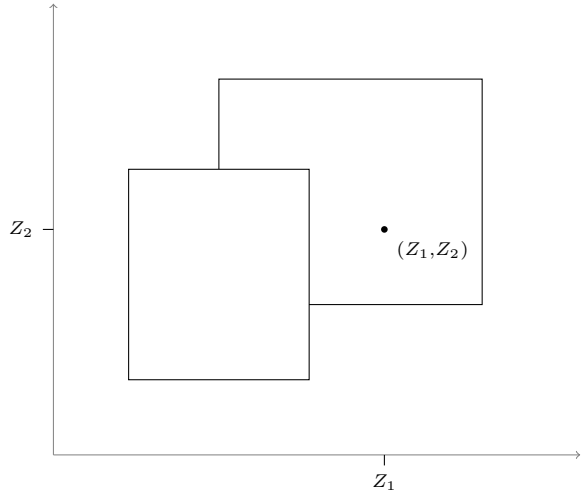

The fact that the projections of the hungry sets are not disjoint is a serious obstacle to the plan to charge the cost of changes to the measures of these projections. In the situation described, we may first discover that~$\tau'$ is incorrect and charge against the projection $\pi_1[G_{\tau'}]$ say; and later discover that~$\tau$ is incorrect and charge against $\pi_1[G_\tau]$. 

To overcome this problem, when we discover that~$\tau'$ is incorrect and want to confirm an incomparable extension $\tau''$ of~$\tau$, for our measure calculations we ignore the oracles that map to~$\tau'$. That is, from the point of view of~$\tau''$, mass has been removed from $G_\tau$ upon discovering that~$\tau'$ was incorrect, and~$\tau''$ wants to first ``refill'' $G_\tau$ before filling~$G_{\tau''}$. This may seem like a bad idea because then the telescopic sum calculation bounding the size of~$U$ is now violated. But it is not violated if instead of an $A$-Solovay test we construct a \emph{difference test}, where we are actually allowed to throw out that part of~$G_\tau$ that mapped to~$\tau'$ and not count it toward the measure of~$U$. Now however, we need to explain why the pair $(Z_1,Z_2)$ cannot be captured by such a test. For this we need the concept of measure-theoretic density.

\subsection{Density points and difference tests} 
\label{sub:density_points}

Restricting ourselves to binary density in Cantor space, for a measurable set $C\subseteq 2^\w$ and a sequence $X\in 2^\w$, the \emph{density} of~$C$ at~$X$ is defined by
\[
	\density{C}{X} = \liminf_{n\to \infty} \leb(C\!\mid\! X\rest n)
\]
where for a string $\s\in 2^{<\w}$, the conditional measure $\leb(C\!\mid\!\s)$
is $\leb(C\cap [\s])/\leb([\s])$. The Lebesgue density theorem says that for almost all $X\in C$, $\density{C}{X}=1$. A sequence~$X$ is a \emph{density one point} if $\density{C}{X}=1$ for every effectively closed ($\Pi^0_1$) set~$C$ containing~$X$. It is a \emph{positive density point} if $\density{C}{X}>0$ for every effectively closed set~$C$ containing~$X$. A random set~$X$ is a positive density point if and only if it is incomplete (Bienvenu et al.~\cite{BienvenuEtAl:DenjoyDemuthDensity}). There is an incomplete random set that is not a density one point
(Day and Miller~\cite{DM:15}). Every Oberwolfach random set is a density one point~\cite{BGKNT:16}, so every random set that is not a density one point computes every $K$-trivial set. 

The concept of dyadic density is extended to the spaces $(2^\w)^n$ (and so $(2^\w)^F$) using the standard ``evenly distributed bits'' isomorphisms~$j_n$ (see \cref{sub:capturing_the_columns_of_Omega}). It is not difficult to see that, for example, a point $(X,Y)$ in the plane $(2^\w)^2$ is a density one point if and only if for every effectively closed set $C\subseteq (2^\w)^2$ containing~$(X,Y)$, $\liminf_{n\to\infty} \leb\left(C \!\mid\! [X\rest n]\times [Y\rest n] \right)=1$. 

\smallskip

In our investigation of $1/2$-bases we will use \emph{product classes}, effectively closed subsets of the ``Cantor plane'' $(2^\w)^2$ of the form $C_1\times C_2$, where both $C_i\subseteq 2^\w$ are effectively closed. 

\begin{proposition} \label{prop:half_bases_and_positive_density_in_product_classes}
	Suppose that $(Z_1,Z_2)$ is a random pair that does not form a minimal pair (i.e., there is a noncomputable set reducible to both~$Z_1$ and~$Z_2$, so that the pair $(Z_1,Z_2)$ witnesses that some noncomputable set is a $1/2$-base). Then the pair $(Z_1,Z_2)$ has positive density in every effectively closed product class $C_1\times C_2$ containing it. 
\end{proposition}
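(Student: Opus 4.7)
The plan is to argue by contradiction: assume $\density{C_1 \times C_2}{(Z_1, Z_2)} = 0$ and derive that $A$ is computable, contradicting the hypothesis. The first step is to exploit the product structure of Lebesgue measure on $(2^\omega)^2$: since basic cylinders around $(Z_1, Z_2)$ take the form $[Z_1\upto n] \times [Z_2\upto n]$,
\[
\leb\bigl(C_1 \times C_2 \mid [Z_1\upto n]\times[Z_2\upto n]\bigr)
\;=\; \leb(C_1 \mid Z_1\upto n)\cdot\leb(C_2 \mid Z_2\upto n),
\]
and the assumption says that the $\liminf_n$ of the right-hand side is~$0$. An elementary observation about nonnegative bounded sequences---if both $\liminf_n \leb(C_i \mid Z_i\upto n)$ were strictly positive, then for large~$n$ both factors, and hence their product, would be bounded away from~$0$---forces $\density{C_i}{Z_i} = 0$ for at least one~$i$. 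By symmetry, assume $\density{C_1}{Z_1}=0$.

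Next I would invoke the Bienvenu--H\"olzl--Miller--Nies characterisation of positive density points cited in the paper: a random sequence is a positive density point if and only if it is Turing incomplete. Since $Z_1$ is random (being a half of the random pair $(Z_1,Z_2)$) yet is not a positive density point, $Z_1 \geT \emptyset'$. By van Lambalgen's theorem $Z_2$ is random relative to $Z_1$, and since $Z_1$ computes $\emptyset'$, $Z_2$ is in fact random relative to $\emptyset'$, i.e.\ $2$-random.

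To conclude, the hypothesis provides a noncomputable $A$ with $A \leT Z_1,Z_2$; as explained in the introduction of the paper, $A$ is a $1/2$-base, hence $K$-trivial, hence low, so $A \leT \emptyset'$. Kautz's theorem that every $2$-random forms a minimal pair with $\emptyset'$ in the Turing degrees now applies to $Z_2$ and $A$, forcing $A$ to be computable, the desired contradiction. The only genuinely delicate step is the opening factorisation of the two-dimensional density into a product of one-dimensional conditional measures; once that is in hand, the rest is a direct concatenation of the density--incompleteness characterisation, van Lambalgen's theorem, and Kautz's minimal pair theorem.
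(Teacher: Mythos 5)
Your proposal is correct and follows essentially the same route as the paper: the factorisation step is exactly the paper's inequality $\density{C_1\times C_2}{(Z_1,Z_2)}\ge \density{C_1}{Z_1}\cdot\density{C_2}{Z_2}$, followed by the same appeal to the Bienvenu--H\"olzl--Miller--Nies positive-density/incompleteness characterisation, van Lambalgen, and $2$-randomness of $Z_2$. The only cosmetic difference is at the end, where you route through $K$-triviality of $A$ and the minimal-pair-with-$\emptyset'$ property of $2$-randoms, while the paper states the equivalent fact that a $2$-random computes no noncomputable $\Delta^0_2$ set (hence no $1/2$-base).
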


\begin{proof}
	Let $C_1\times C_2$ be the product of two effectively closed sets; suppose that $Z_1\in C_1$, $Z_2\in C_2$, and that $\density{C_1\times C_2}{(Z_1,Z_2)}=0$. Since $\density{C_1\times C_2}{(Z_1,Z_2)}\ge \density{C_1}{Z_1}\cdot \density{C_2}{Z_2}$, either $\density{C_1}{Z_1}=0$ or $\density{C_2}{Z_2}=0$. By~\cite{BienvenuEtAl:DenjoyDemuthDensity}, either $Z_1$ or~$Z_2$ computes~$\emptyset'$, say~$Z_1$. But then $Z_2$ is 2-random, so it cannot compute any $\Delta^0_2$ set, and in particular, no $1/2$-base.
\end{proof}

\begin{remark} \label{rmk:actually_can_get_density_one}
	In fact, the assumption of \cref{prop:half_bases_and_positive_density_in_product_classes} implies that the pair $(Z_1,Z_2)$ is a density one point in product classes. If $\density{C_1\times C_2}{(Z_1,Z_2)}<1$ then either~$Z_1$ or~$Z_2$ is not a density one point, say~$Z_1$. By \cite{BienvenuEtAl:DenjoyDemuthDensity}, $Z_1$ is almost everywhere dominating, which implies that it is LR-hard \cite{Kjos.Miller.ea:11}. So again,~$Z_2$ is 2-random and forms a minimal pair with~$Z_1$. 

	When we discuss $k/n$-bases, we will need a generalisation of this fact (\cref{lem:density_in_F_product_classes-minimal_witnesses}). But \cref{prop:half_bases_and_positive_density_in_product_classes} suffices in the simple case of $1/2$-bases.
\end{remark}

The equivalence between positive density and incompleteness for random sequences passes through the notion of difference randomness. A \emph{difference test} is a sequence $\seq{P\cap G_n}$, where~$P$ is a fixed effectively closed set, $\seq{G_n}$ is uniformly effectively open and nested, and $\leb(P\cap G_n)\le 2^{-n}$; the null set defined is $P\cap \bigcap_n G_n$. Franklin and Ng~\cite{FranklinNg:Difference} showed that a random sequence~$Z$ computes~$\emptyset'$ if and only if it is captured by some difference test. Bienvenu et al.~\cite[Lemma 3.3]{BienvenuEtAl:DenjoyDemuthDensity} showed the following:

\begin{lemma} \label{lem:density_zero_and_difference_tests}
	The following are equivalent for a random sequence~$Z$ and an effectively closed set~$P$ containing~$Z$:
	\begin{enumerate}
			\item $\density{P}{Z}=0$; 
			\item Some difference test of the form $\seq{P\cap G_n}$ captures~$Z$. 
		\end{enumerate}	
\end{lemma}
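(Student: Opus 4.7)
The plan is to prove each direction by exhibiting the appropriate cover.

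For $(1) \Rightarrow (2)$, I would construct a canonical ``universal'' difference test. Since $P$ is $\Pi^0_1$, the conditional measure $\mu(P \mid \tau)$ is upper semi-computable in $\tau$, so ``$\mu(P \mid \tau) < 2^{-n}$'' is a $\Sigma^0_1$ property. Define $G_n = \bigcup\{[\tau] : \mu(P \mid \tau) < 2^{-n}\}$; then $\seq{G_n}$ is uniformly c.e.\ and nested decreasing. Restricting to $\prec$-minimal qualifying $\tau$ gives pairwise disjoint cylinders, so
\[
\mu(P \cap G_n) \;\le\; \sum_{\tau \text{ min}} \mu(P \cap [\tau]) \;\le\; 2^{-n} \sum_{\tau \text{ min}} \mu([\tau]) \;\le\; 2^{-n}.
\]
Moreover, $Z \in G_n$ iff some $\tau \prec Z$ has $\mu(P \mid \tau) < 2^{-n}$, so $Z \in P \cap \bigcap_n G_n$ iff $\liminf_k \mu(P \mid Z \rest k) = 0$, i.e.\ $\density{P}{Z} = 0$.

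For $(2) \Rightarrow (1)$ my plan is a proof by contradiction. Assume $Z$ is captured by a difference test $\seq{P \cap G_n}$ yet $\density{P}{Z} > c$ for some rational $c > 0$; the goal is then to build a Martin-L\"of test capturing $Z$, contradicting its randomness. First fix $n_0$ with $\mu(P \mid Z \rest n) > c$ for all $n \ge n_0$. The basic geometric observation is this: for each $n$, let $\tau_n \prec Z$ be $\prec$-least with $[\tau_n] \subseteq G_n$. The $\tau_n$ are weakly nested (as the $G_n$ are) and their lengths must diverge (else some $[\tau^*] \subseteq \bigcap_n G_n$ would force $\mu(P \cap [\tau^*]) = 0$, contradicting positive density along $Z$). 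So for $n$ large, $|\tau_n| \ge n_0$, whence $\mu(P \mid \tau_n) > c$ and thus $\mu([\tau_n]) \le c^{-1} \mu(P \cap [\tau_n]) \le c^{-1} 2^{-n}$.

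To package these cylinders into a Martin-L\"of test, I would for each rational $1/k$ and each $\sigma_0 \in 2^{<\w}$ (countably many parameters) define
\[
U_n^{k, \sigma_0} \;=\; \bigcup\bigl\{[\tau] \,:\, \sigma_0 \preceq \tau,\ [\tau] \subseteq G_n[s]\ \text{and}\ \mu(P_s \cap [\tau]) \ge k^{-1} \mu([\tau])\ \text{at some stage } s\bigr\},
\]
where $\seq{P_s}$ is a computable clopen sequence descending to $P$. Each $U_n^{k, \sigma_0}$ is uniformly $\Sigma^0_1$. A Solovay-style accounting should yield $\mu(U_n^{k, \sigma_0}) = O_k(2^{-n})$; indexing appropriately then produces a bona fide Martin-L\"of test, and for parameters matching $Z$ (i.e.\ $1/k < c$ and $\sigma_0 \prec Z$ of length $\ge n_0$), $Z \in \bigcap_n U_n^{k, \sigma_0}$.

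The hard part will be this last step. The ``high density'' condition $\mu(P \mid \tau) \ge 1/k$ is $\Pi^0_1$, not $\Sigma^0_1$, so it cannot be enumerated directly; the Solovay bookkeeping must cope with provisional densities $\mu(P_s \cap [\tau])/\mu([\tau])$ potentially dropping as $s$ grows and $P_s$ shrinks toward $P$. My plan is to enumerate $[\tau]$ at the first stage where the density threshold is witnessed, and then bound the accumulated measure using $\mu(P \cap G_n) \le 2^{-n}$ together with the fact that $Z$'s genuine density stays above $c$, so that the persistent qualifying cylinders in fact cover $Z$. Making this estimate rigorous, against the $\Pi^0_1$ obstruction, is the crux of the argument.
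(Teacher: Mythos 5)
Your direction (1)$\Rightarrow$(2) is fine: the canonical test $G_n=\bigcup\{[\tau]:\leb(P\cap[\tau])<2^{-n}\leb([\tau])\}$ is c.e.\ because $\leb(P\cap[\tau])$ is upper semicomputable, the minimal-cylinder argument gives $\leb(P\cap G_n)\le 2^{-n}$, and capturing is immediate; this is the standard argument (note the paper does not prove this lemma at all, but quotes it as Lemma~3.3 of Bienvenu, H\"olzl, Miller and Nies). The gap is in (2)$\Rightarrow$(1), and it is exactly at the point you flag, so it is not a deferred technicality but the whole content of this direction. The measure bound $\leb(U_n^{k,\sigma_0})=O_k(2^{-n})$ on which your plan rests is false for the sets as you define them. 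You enumerate $[\tau]$ as soon as some stage $s$ witnesses $[\tau]\subseteq G_n[s]$ and $\leb(P_s\cap[\tau])\ge k^{-1}\leb([\tau])$. But $P_s$ only over-approximates $P$: at the stage at which a cylinder $[\tau]$ enters $G_n$, the clopen approximation $P_s$ may still contain all of $[\tau]$, so $[\tau]$ qualifies with provisional density $1$ even if $\leb(P\cap[\tau])=0$. Consequently $U_n^{k,\sigma_0}$ in general swallows (up to timing of the two enumerations) all of $G_n\cap[\sigma_0]$, and its measure is governed by $\leb(G_n)$, which the hypothesis does not bound, rather than by $\leb(P\cap G_n)\le 2^{-n}$. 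Your proposed repair --- that the persistent qualifying cylinders cover $Z$ because $Z$'s true density stays above $c$ --- addresses capturing only; the test bound must account for every enumerated cylinder, including the false positives, and positive density at $Z$ gives no control over those. A charging argument against the mass extracted from the approximation of $P$ also fails, since the same extracted mass can be charged once for every level $n$.

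Note also that the obvious way to restore the bound --- counting only cylinders with true density $\leb(P\mid\tau)\ge k^{-1}$, which does give $\sum\leb([\tau])\le k\cdot 2^{-n}$ by your disjointness argument --- makes the condition $\Pi^0_1$, so the resulting sets are only $\emptyset'$-effectively open and capturing $Z$ in them contradicts $2$-randomness, not Martin-L\"of randomness. So one really needs a finer idea to get a genuinely $\Sigma^0_1$ test out of the hypotheses, and your proposal does not yet contain it; as it stands, only the easy direction is proved.
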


Thus, \cref{lem:density_zero_and_difference_tests,prop:half_bases_and_positive_density_in_product_classes} together show that if a pair $(Z_1,Z_2)$ witnesses that some noncomputable set is a $1/2$-base, then this pair cannot be captured by a difference test whose effectively closed component is a product class. 


\subsection{The c.e.\ construction} 
\label{sub:the_c_e_ construction}

We can now provide the proof of \cref{prop:ce_k_n_bases} in the case of $1/2$-bases:

\begin{proposition} \label{prop:ce_half_bases_fully_obey}
	Every c.e.\ $1/2$-base obeys $\cost_{\Omega,1/2}$. 
\end{proposition}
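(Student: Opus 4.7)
The plan is to adapt the base-for-randomness hungry sets argument sketched in \cref{sub:adapting_hungry_sets}. Fix a noncomputable c.e.\ $1/2$-base $A$ with witnesses $Z_1,Z_2$ and reductions $\Psi_i(Z_i)=A$. By \cref{prop:half_bases_and_positive_density_in_product_classes}, the pair $(Z_1,Z_2)$ has positive density in every effectively closed product class containing it, and hence (by the planar version of \cref{lem:density_zero_and_difference_tests}) cannot be captured by any difference test whose $\Pi^0_1$ component is a product class. The overall strategy is to run, uniformly in each dyadic $\epsilon>0$, an $\epsilon$-construction that either accelerates the enumeration of $A$ to witness that $A\models\cost_{\Omega,1/2}$ with total cost $O(1/\sqrt\epsilon)$, or else produces a product-class difference test of measure $O(\epsilon)$ capturing $(Z_1,Z_2)$. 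Choosing $\epsilon$ smaller than the positive density rules out the second outcome for that $\epsilon$, forcing obedience.

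For each $\tau\in 2^{<\w}$ we enumerate an open set $G_\tau\subseteq \Psi_1^{-1}[\tau]\times\Psi_2^{-1}[\tau]$ in the plane $(2^\w)^2$. The $G_\tau$'s are pairwise disjoint; enumeration of $G_{\tau'}$ begins only after the immediate predecessor $\tau$ is \emph{confirmed}, i.e.\ once $\leb(G_\tau)$ reaches the target $\epsilon(\Omega_{|\tau|+1}-\Omega_{|\tau|})$. Thus the total planar mass enumerated is at most $\epsilon\cdot\Omega\le\epsilon$. To handle the obstacle that projections of planar-disjoint sets need not be disjoint, we employ the refilling idea described in \cref{sub:adapting_hungry_sets}: when the approximation $A_s$ leaves a previously confirmed $\tau$ for an incomparable extension $\tau''$ of a common predecessor, the projections of the now-stale $G_\sigma$ (for $\sigma\succeq\tau$) that lie outside the currently-relevant product class $\Psi_1^{-1}[\tau'']\times\Psi_2^{-1}[\tau'']$ are marked \emph{discarded}, and $G_{\tau''}$ must be refilled from the corresponding deficit before it can itself be confirmed. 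The discarded mass becomes the effectively open component of the prospective difference test.

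Assume first that every $\tau\prec A$ is eventually confirmed, and pick a computable increasing sequence of stages $s_0<s_1<\cdots$ such that $A_{s_n}\rest n$ is confirmed at stage $s_n$. At stage $s_{n+1}$ the change position $k\le n$ incurs $\cost_{\Omega,1/2}$-cost $\sqrt{\Omega_{n+1}-\Omega_k}$. The union
\[
V_n=\bigcup_{m\in(k,n]} G_{A_{s_n}\rest m}
\]
is contained in the product $\pi_1[V_n]\times\pi_2[V_n]$ and, by disjointness of the $G_\tau$'s and the confirmation targets, has planar measure $\epsilon(\Omega_{n+1}-\Omega_k)$; hence at least one of its projections has measure $\ge\sqrt{\epsilon(\Omega_{n+1}-\Omega_k)}$, giving
\[
\sqrt{\Omega_{n+1}-\Omega_k}\le \tfrac{1}{\sqrt\epsilon}\bigl(\leb(\pi_1[V_n])+\leb(\pi_2[V_n])\bigr).
\]
The refilling discipline, combined with the left-c.e.\ monotonicity of $\seq{A_s}$, is designed so that along the confirmed subsequence the live projections $\pi_i[V_n]$ are essentially disjoint within each coordinate $i$; summing over $n$ then bounds the total $\cost_{\Omega,1/2}$-cost by $2/\sqrt\epsilon$. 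If instead some $\tau\prec A$ is never confirmed, the remaining unfilled portion of $\Psi_1^{-1}[\tau]\times\Psi_2^{-1}[\tau]$, together with the accumulated discarded mass, yields the promised product-class difference test capturing $(Z_1,Z_2)$ with measure $O(\epsilon)$, contradicting positive density for $\epsilon$ small.

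The main obstacle, and the heart of the construction, is managing the refilling bookkeeping so that three properties hold simultaneously: (a) the $G_\tau$'s stay pairwise disjoint in the plane with total mass at most $\epsilon$, so the product-class telescoping of targets works; (b) the discarded portions form a uniformly c.e.\ open array that, together with the product-class base $\Psi_1^{-1}[\tau]\times\Psi_2^{-1}[\tau]$ for an unconfirmed $\tau$, constitutes a legitimate difference test of measure $O(\epsilon)$; and (c) along the true path of $A$, the live projections $\pi_i[V_n]$ are disjoint in each coordinate separately, so that the cost estimate of paragraph three telescopes. The required geometric input is merely the planar inequality $\leb(G)\le\leb(\pi_1[G])\cdot\leb(\pi_2[G])$ (a degenerate case of Loomis--Whitney), but the entire argument turns on the correct dynamic accounting of live versus discarded mass.
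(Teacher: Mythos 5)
Your plan is the paper's plan (hungry sets per dyadic $\epsilon$, confirmation quotas tied to $\Omega_{|\tau|+1}-\Omega_{|\tau|}$, a difference test with product-class closed part in the failure outcome, and charging $\sqrt{\Omega_{k+1}-\Omega_x}$ to a projection via $\leb(V)\le\leb(\pi_1[V])\cdot\leb(\pi_2[V])$), but the proof has a genuine gap exactly where the proposition is hard: properties (a)--(c) are asserted ("the refilling discipline \dots is designed so that"), not established, and as stated they cannot all hold together. With refilling, the total planar mass enumerated is \emph{not} bounded by $\epsilon\cdot\Omega$: each discard can force a fresh quota's worth of mass and there is no bound on the number of discards, so "pairwise disjoint $G_\tau$ with total mass at most $\epsilon$" is inconsistent with unlimited refilling. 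The paper's resolution is to never bound $\leb(G)$ at all: it fixes $P_s=P^1_s\times P^2_s$, where $P^i_s$ is the set of oracles $X$ with $\Psi_{i,s}(X)$ not strictly to the left of $A_s$, allows the $G_\tau$ to overlap (refilling mass may come from descendants), and measures each quota \emph{relative to} $P_s$, namely $\leb\left((G_{\tau,s}\setminus G_{\prec\tau,s})\cap P_s\right)\le\epsilon\,(\Omega_{|\tau|}-\Omega_{|\tau|-1})$; only $\leb(G\cap P)\le\epsilon$ is ever needed, since the test is $P\cap G$ and $(Z_1,Z_2)\in P$ automatically. The same device is what delivers your (c), which you leave unspecified: setting $V_k=P_{s_k}\cap\bigcup_{\tau\in D_k}G_{\tau,s_k}$, the confirmation quotas give $\leb(V_k)=\epsilon(\Omega_{k+1}-\Omega_x)$, and for $k<k'$ the projections $\pi_1[V_k]$ and $\pi_1[V_{k'}]$ are disjoint simply because $\pi_1[V_k]\subseteq\Psi_1^{-1}[A_{s_k}\rest{x_k+1}]$, which is disjoint from $P^1_{s_{k'}}$ once the approximation has moved past that string, while $\pi_1[V_{k'}]\subseteq P^1_{s_{k'}}$. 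Without some concrete mechanism of this kind your cost sum does not telescope, and "essentially disjoint" is precisely the claim that needs proof.

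Two further points. First, a single failed $\epsilon$-construction does not contradict positive density: any point lies in open sets of arbitrarily small measure, so "choosing $\epsilon$ smaller than the positive density" proves nothing. One must run the $2^{-n}$-constructions for all $n$ and, if all fail, assemble the nested test $\seq{P\cap G_n}$; moreover this is only an \emph{$A$-}difference test (enumerating $\bigcup_{\tau\prec A}G_\tau$ requires $A$), so you need \cref{lem:density_zero_and_difference_tests} relativised to $A$ together with the fact that $(Z_1,Z_2)$ is $A$-random (a $1/2$-base is $K$-trivial, hence low for random) to conclude $\density{P}{(Z_1,Z_2)}=0$ and contradict \cref{prop:half_bases_and_positive_density_in_product_classes}; your sketch cites only the unrelativised statement. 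Second, the capture argument for an unconfirmed $\tau\prec A$ should not invoke "the remaining unfilled portion of $\Psi_1^{-1}[\tau]\times\Psi_2^{-1}[\tau]$" (whose measure is uncontrolled); the correct point is that failure to meet the quota means the construction has swallowed all available mass, so at a late stage $t$ one has $G_{\preceq A\rest{n},t}\supseteq\Psi_{1,t}^{-1}[A\rest{n}]\times\Psi_{2,t}^{-1}[A\rest{n}]$, whence $(Z_1,Z_2)\in G$.
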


\begin{proof}
	Let~$A$ be a c.e.\ $1/2$-base, witnessed by the pair $(Z_1,Z_2)$. Let~$\Psi_1$ and~$\Psi_2$ be functionals such that $\Psi_i(Z_i) = A$. Let $\seq{A_s}$ be an enumeration of~$A$. For $i=1,2$, we let $P^i_s$ be the collection of oracles~$X$ such that $\Psi_{i,s}(X)$ does not lie strictly to the left of~$A_s$. We let $P^i = \bigcap_s P^i_s$, $P_s = P^1_s\times P^2_s$, and $P = \bigcap_s P_s = P^1\times P^2$. 

	\smallskip

	Let $\epsilon >0$ be a dyadic rational. We describe the $\epsilon$-construction. All sets defined henceforth depend on~$\epsilon$, and we omit mentioning this parameter.

	At stage $s<\w$, we define for all strings~$\tau$ clopen sets $G_{\tau,s} \subseteq \Psi_{1,s}^{-1}[\tau]\times \Psi_{2,s}^{-1}[\tau]$. These sets are increasing in~$s$. We refill $G_{\tau,s}$ as parts of it exit~$P_s$. The new measure could come from $G_{\s,s-1}$ where~$\s$ extends~$\tau$, so the sets~$G_{\tau,s}$ will not be pairwise disjoint. 

	We start with $G_{\tau,0} = \emptyset$ for all~$\tau$. At stage~$s$, we only add mass to $G_{\tau,s}$ for $\tau\prec A_s$, and decide whether such strings~$\tau$ are \emph{confirmed} at stage~$s$. This is done by induction on~$|\tau|$. We start with $G_{\emptystring,s}=\emptyset$; the empty string is always confirmed. Let $\tau \prec A_s$ be nonempty and suppose that we have already defined $G_{\tau',s}$ for all proper initial segments~$\tau'$ of~$\tau$, and that all these initial segments are confirmed at stage~$s$. We let $G_{\prec \tau,s} = \bigcup_{\tau'\precneq \tau} G_{\tau',s}$. 

 	We ensure that for all~$s$, 
	\[
	\leb \left( (G_{\tau,s}\setminus G_{\prec\tau,s})\cap P_{s} \right) \le \epsilon \cdot (\Omega_{|\tau|}-\Omega_{|\tau|-1}). 
	\]
	Note that this implies that $\leb(G_{\preceq\tau,s}\cap P_s) \le \epsilon\cdot\Omega_{|\tau|}$. 
	To define $G_{\tau,s}$, we add mass from $\Psi_{1,s}^{-1}[\tau]\times \Psi_{2,s}^{-1}[\tau]$ to~$G_{\tau,s-1}$, being careful to maintain the bound. Note that the bound has not already been violated because $P_{s-1}\supseteq P_s$ and $G_{\prec \tau,s-1} \subseteq G_{\prec\tau,s}$. If sufficient mass is found so that equality is obtained, then we declare~$\tau$ to be confirmed at stage~$s$ and move on to the next initial segment of~$A_s$. Otherwise, we declare~$\tau$ (and all of the longer initial segments of~$A_s$) unconfirmed at stage~$s$; we let $G_{\tau',s} = G_{\tau',s-1}$ for every $\tau'\npreceq \tau$, and move to stage~$s+1$. Observe that if~$\tau$ is confirmed at stage~$s$, then $\leb(G_{\preceq\tau,s}\cap P_s) = \epsilon\cdot\Om_{|\tau|}$.

	\smallskip

 For all~$\tau$, let $G_{\tau} = \bigcup_s G_{\tau,s}$ and $G_{\preceq \tau}= \bigcup_s G_{\preceq\tau,s} = \bigcup_{\tau'\preceq \tau} G_{\tau'}$. Then $G_{\preceq\tau}\cap P = \bigcup_s \left( G_{\preceq\tau,s}\cap P\right)$. For each~$\tau$ and~$s$, $G_{\preceq\tau,s}\cap P\subseteq G_{\preceq\tau,s}\cap P_s$ and so $\leb(G_{\preceq\tau,s}\cap P)\le \epsilon \cdot \Omega_{|\tau|}$. It follows that $\leb(G_{\preceq\tau}\cap P)\le \epsilon \cdot \Omega_{|\tau|}$. 

	Let $G= \bigcup_{\tau\prec A} G_{\tau}$. Then $G\cap P$ is the increasing union of the sets $G_{\preceq \tau}\cap P$ for $\tau\prec A$, all of measure bounded by~$\epsilon$, and so $\leb(G\cap P)\le \epsilon$. 

	Suppose that there is some~$n$ such that the string $A\rest{n}$ is confirmed during only finitely many stages; let~$n$ be the least such. Let~$t$ be sufficiently large so that $A_t\rest n\prec A$; $A_t\rest{n-1}$ is confirmed at stage~$t$; and for both $i\le 2$, $Z_i\in \Psi_{i,t}^{-1}[A\rest{n}]$. Then the fact that~$A\rest{n}$ is not confirmed at stage~$t$ implies that $G_{\preceq A\rest{n},t} \supseteq \Psi_{1,t}^{-1}[A\rest{n}] \times \Psi_{2,t}^{-1}[A\rest{n}]$ and so $(Z_1,Z_2)\in G$. 

	So there must be some~$\epsilon$ for which every initial segment of~$A$ is confirmed infinitely often. For suppose otherwise. Then the sets~$G_n$ given by the $2^{-n}$-constructions together with~$P$ form an $A$-difference test $\seq{P\cap G_n}$ which captures~$(Z_1,Z_2)$. Relativising \cref{lem:density_zero_and_difference_tests} to~$A$ and using the fact that $(Z_1,Z_2)$ is $A$-random (as $A$ is $K$-trivial), we conclude that $\density{P}{(Z_1,Z_2)} = 0$. However~$P$ is a product class; this contradicts \cref{prop:half_bases_and_positive_density_in_product_classes}.

	\smallskip

	Fix~$\epsilon$ for which every initial segment of~$A$ is confirmed infinitely often. Define an increasing sequence of stages $s_0<s_1<\cdots $ such that at stage~$s_k$ the string $A_{s_k}\rest {(k+1)}$ is confirmed. We claim that the total $\cost_{\Omega,1/2}$-cost of the enumeration $\seq{A_{s_k}}$ is finite. Let $k\ge 0$; let $x= x_k$ be the least such that $A_{s_{k+1}}(x)\ne A_{s_k}(x)$; assume that $x\le k$. The incurred cost between~$s_k$ and $s_{k+1}$ is $\sqrt{\Omega_{k+1}-\Omega_x}$. Let 
	\[ 
	D_k = \left\{ A_{s_k}\rest{(x+1)}, A_{s_k}\rest{(x+2)}, \dots, A_{s_k}\rest{(k+1)} \right\}
	\]
	and let \[ V_k = P_{s_k} \cap \bigcup_{\tau\in D_k} G_{\tau,s_k}.\]
	The fact that every string in~$D_k$ is confirmed at stage~$s_k$ implies that $\leb(V_k)= \epsilon\cdot (\Omega_{k+1}-\Omega_x)$. Hence either $\pi_1[V_k]$ or $\pi_2[V_k]$ has measure at least $\sqrt{\epsilon}\sqrt{\Omega_{k+1}-\Omega_x}$. 

	Suppose that $k<k'$. Then $\pi_1[V_k]$ and $\pi_1[V_{k'}]$ are disjoint (and the same holds for $\pi_2$). The reason is that 
	 $\pi_1[V_k]\subseteq \Psi_{1}^{-1}[A_{s_k}\rest{x_k+1}]$, which is disjoint from $P^1_{s_{k+1}}$ and so from $P^1_{s_{k'}}$, whereas $\pi_1[V_{k'}]\subseteq P^1_{s_{k'}}$. Overall, we see that the total cost paid is bounded by $2/\sqrt{\epsilon}$. 
\end{proof}


\subsection{The general \texorpdfstring{\boldmath$1/2$}{1/2}-base construction} 
\label{sub:the_general_half_base_construction}

Weak obedience is very much weaker than full obedience. The main problem above, which drove us to use difference tests, is no longer a problem: the notion was designed so that we always charge for incompatible strings. We can therefore return to the basic hungry sets construction.

\begin{proposition} \label{prop:general_half_base_weakly_obeys}
	Every $1/2$-base weakly obeys $\cost_{\Omega,1/2}$. In fact, if $\seq{A_s}$ is a computable approximation of a $1/2$-base~$A$, then there is a computable subapproximation $\seq{A_{s_k}}$ that witnesses that~$A$ weakly obeys $\cost_{\Omega,1/2}$. 
\end{proposition}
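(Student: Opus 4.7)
The plan is to follow the template of \cref{prop:ce_half_bases_fully_obey}, taking advantage of the essential simplification that weak obedience provides: since we only charge at last $n$-stages (which reflect the true values of $A$), we need not refill hungry sets when the approximation changes, and consequently we can replace the difference test by an ordinary $A$-Solovay test. Let $(Z_1,Z_2)$ witness that $A$ is a $1/2$-base with $\Psi_i(Z_i)=A$, and fix the given computable approximation $\seq{A_s}$. For each dyadic $\epsilon>0$ we run an independent $\epsilon$-construction building a \emph{pairwise disjoint} family of clopen sets $G_\tau\subseteq \Psi_1^{-1}[\tau]\times\Psi_2^{-1}[\tau]$, monotone in $s$, with target $\leb(G_\tau)=\epsilon(\Omega_{|\tau|}-\Omega_{|\tau|-1})$. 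At stage $s$, for each $k\le s$ such that $\tau=A_s\rest{k}$ has all proper initial segments already confirmed, we add clopen mass to $G_\tau$ from $(\Psi_{1,s}^{-1}[\tau]\times\Psi_{2,s}^{-1}[\tau])\setminus \bigcup_{\sigma\ne \tau}G_{\sigma,s}$, up to the target. No refilling ever occurs.

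This yields the dichotomy: for each $\epsilon$, either (a) every initial segment of $A$ is eventually confirmed, or (b) some is not. To rule out case (b), suppose that $A\rest{n_\epsilon}$ is never confirmed with $n_\epsilon$ minimal. Then at every stage $s$ at which $A_s\rest{n_\epsilon}=A\rest{n_\epsilon}$, the construction adds all currently available mass in $\Psi_{1,s}^{-1}[A\rest{n_\epsilon}]\times\Psi_{2,s}^{-1}[A\rest{n_\epsilon}]$ to $G_{A\rest{n_\epsilon}}$ and still falls short of its target. Taking the limit, the full product $\Psi_1^{-1}[A\rest{n_\epsilon}]\times\Psi_2^{-1}[A\rest{n_\epsilon}]$ is covered (up to a null set) by $\bigcup_\sigma G_\sigma$, so in particular $(Z_1,Z_2)\in G_\sigma$ for some activated $\sigma$. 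Since $\Psi_i(Z_i)=A$, a point in $G_\sigma$ can contain $(Z_1,Z_2)$ only when $\sigma$ is a prefix of $A$, and therefore $(Z_1,Z_2)\in U_\epsilon:=\bigcup_{\sigma\prec A}G_\sigma$. The set $U_\epsilon$ is $A$-effectively open, and by pairwise disjointness its measure telescopes to at most $\epsilon\cdot\Omega$. Letting $\epsilon$ range over $\{2^{-k}\}$ yields an $A$-Solovay test capturing $(Z_1,Z_2)$, contradicting the $A$-randomness of $(Z_1,Z_2)$ (which holds because $A$ is $K$-trivial, as every $1/2$-base is).

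For case (a), fix $\epsilon$ for which every initial segment of $A$ is eventually confirmed, and let $s_k$ be the first stage $s>s_{k-1}$ at which $A_s\rest{(k+1)}$ is confirmed; then $\seq{A_{s_k}}$ is a computable sub-approximation of $A$. Mimicking the cost calculation in \cref{prop:ce_half_bases_fully_obey}, to each last $n$-stage $s_k$ we associate a union $V_k$ of hungry sets of measure $\asymp \epsilon(\Omega_{s_k}-\Omega_n)$ whose coordinate projections are pairwise disjoint as the last stages vary, so that one projection has measure at least $\sqrt{\epsilon(\Omega_{s_k}-\Omega_n)}$ and summing the disjoint projections bounds the weak total $\cost_{\Omega,1/2}$-cost by $O(1/\sqrt\epsilon)$. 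The \textbf{main obstacle} is this disjointness of projections: in the c.e.\ argument it came for free from the leftward-monotone enumeration of $A$, which is no longer available. The intended remedy is to build $V_k$ as the union of those hungry sets $G_\sigma$ for strings $\sigma$ that branch off the true path at position $n$ (that is, $\sigma\rest{n}=A\rest{n}$ but $\sigma(n)\ne A(n)$) and that were confirmed between the previous last stage and $s_k$; for distinct last $n$- and $n'$-stages, the corresponding branching strings disagree with $A$ at the smaller of $n,n'$, so they are pairwise incomparable, and hence the $\Psi_i$-preimages and therefore the projections of the $V_k$ are pairwise disjoint. The delicate step is verifying that the total measure gathered from these abandoned branches is proportional to $\Omega_{s_k}-\Omega_n$, which will require bookkeeping showing that the growth of $\Omega$ between consecutive last stages is matched by the confirmed mass of the strings that the approximation abandons in that window.
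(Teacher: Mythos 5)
Your construction and the capture half of your argument (pairwise disjoint hungry sets with permanent confirmation, the $A$-Solovay test $\bigcup_{\sigma\prec A}G_\sigma$ of measure at most $\epsilon\cdot\Omega$, and $K$-triviality of $A$ giving $A$-randomness of $(Z_1,Z_2)$) coincide with the paper's proof. The genuine gap is exactly the step you flag as ``delicate'': you never establish that the mass available at a last $n$-stage matches the cost charged there, and the bookkeeping you propose for it is not the right one. Two concrete problems. First, the quantity to be dominated is $\sqrt{\Omega_{k}-\Omega_n}$, not $\sqrt{\Omega_{s_k}-\Omega_n}$: the weak total cost of the subapproximation $\seq{A_{s_k}}$ is computed at its own stage index $k$, and no union of confirmed hungry sets below an abandoned branch can have measure $\asymp \epsilon(\Omega_{s_k}-\Omega_n)$, since the confirmed abandoned prefix only has length about $k$ and telescoping its hungry sets yields only $\epsilon(\Omega_{k}-\Omega_n)$. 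Second, restricting to sets ``confirmed between the previous last stage and $s_k$'' does not give a usable lower bound: the relevant confirmed strings (the prefixes of the abandoned approximation of lengths $n+1,\dots,k$) may have been confirmed long before the previous last stage, so the windowed union can be much smaller, and nothing ties the growth of $\Omega$ between consecutive last stages to the mass of branches abandoned in that window. The windowing is also unnecessary for disjointness, which, as you correctly note, already follows from incomparability of the branch strings.

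The paper closes this step with no window accounting, and this is precisely where the choice of the subsequence pays off. Let $k=k(n)$ be the last $n$-stage of $\seq{A_{s_k}}$, put $t=s_{k-1}$ and $\sigma_n=A_{t}\rest{k}$. By the choice of the stages, $\sigma_n$ --- a string of length $k$, the subapproximation index --- is confirmed at stage $t$, hence so is every prefix $\sigma_n\rest{m}$; thus the sets $G_{\sigma_n\rest{m},t}$ for $m=n+1,\dots,k$ are full and pairwise disjoint, and their union $V_n$ has measure exactly $\epsilon(\Omega_{k}-\Omega_n)$ while being contained in $\Psi_1^{-1}[\sigma_n\rest{(n+1)}]\times\Psi_2^{-1}[\sigma_n\rest{(n+1)}]$. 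Since $\sigma_n\rest{n}=A\rest{n}$ and $\sigma_n(n)\ne A(n)$, the strings $\sigma_n\rest{(n+1)}$ are pairwise incomparable for distinct $n\in N$, which yields the disjointness of projections you identified, and then $\max_{i\le 2}\leb(\pi_i[V_n])\ge\sqrt{\epsilon}\sqrt{\Omega_{k}-\Omega_n}$ bounds the weak total $\cost_{\Omega,1/2}$-cost by $2/\sqrt{\epsilon}$. So the outline is on the right track, but the ``delicate step'' is the crux of the verification, and it is resolved by charging against the prefixes of the single abandoned string $A_{s_{k-1}}\rest{k}$ rather than against mass confirmed within a window.
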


\begin{proof}
	We simplify the proof of \cref{prop:ce_half_bases_fully_obey}. Fix a witness $(Z_1,Z_2)$ and functionals $\Psi_1$ and~$\Psi_2$ as above. 

\smallskip
	
	Fix a dyadic rational $\epsilon>0$. As before, we enumerate clopen sets $G_{\tau,s} \subseteq \Psi_{1,s}^{-1}[\tau]\times \Psi_{2,s}^{-1}[\tau]$. 
	We ensure that for all $\tau\ne \emptyset$ and all~$s$, 
	\[
		\leb \left( G_{\preceq \tau,s}  \right) \le \epsilon \cdot (\Omega_{|\tau|}- \Omega_{|\tau|-1}). 
	\]
	In this construction, the sets $G_{\tau}$ will be pairwise disjoint. A string~$\tau$ is \emph{confirmed} at stage~$s$ if equality holds.	We start with $G_{\tau,0} = \emptyset$ for all~$\tau$. Let $s>0$ be a stage. The empty string is always confirmed and $G_{\emptystring,s}= \emptyset$. Let~$\tau$ be a string, and suppose that at stage~$s$, its immediate predecessor $\tau^-$ is already confirmed, but that~$\tau$ is not yet confirmed. We enumerate mass from $\Psi_{1,s}^{-1}[\tau]\times \Psi_{2,s}^{-1}[\tau]$ into $G_{\tau,s}$, ensuring that we do not overshoot the bound $ \epsilon \cdot (\Omega_{|\tau|}- \Omega_{|\tau|-1})$. If the bound is met, we declare that~$\tau$ is confirmed (currently and at all future stages), and go on to deal with the two immediate successors of~$\tau$. If not, then for every proper extension $\s$ of~$\tau$ we let $G_{\s,s} = G_{\s,s-1} = \emptyset$. 

	\smallskip

	As before we let $G_\tau = \bigcup_s G_{\tau,s}$, and $G = \bigcup_{\tau\prec A} G_\tau$; so $\leb(G) \le \epsilon$. If some initial segment~$\tau$ of~$A$ is never confirmed, then $(Z_1,Z_2)\in G$. If this holds for every $\epsilon>0$, then $(Z_1,Z_2)$ is captured by an $A$-ML test. However, since~$A$ is a $1/2$-base, it is $K$-trivial, and so low for random, and so $(Z_1,Z_2)$ is $A$-random and cannot be captured by such a test. Thus, fix some $\epsilon>0$ such that in the $\epsilon$-construction, every initial segment of~$A$ is eventually confirmed. 

	\smallskip

	Again define an increasing sequence $s_0 < s_1 < \cdots$ such that $A_{s_k}\rest{(k+1)}$ is confirmed at stage~$s_k$. We show that the weak total $\cost_{\Omega,1/2}$-cost of the enumeration $\seq{A_{s_k}}$ is finite. Let $N$ be the set of~$n$ for which the approximation $\seq{A_{s_k}}$ has an $n$-stage; for $n\in N$, let $k(n)$ be the last $n$-stage for this approximation. The weak total cost is $\sum_{n\in N} \sqrt{\Omega_{k(n)}-\Omega_n}$. 

	For $n\in N$, for brevity let $t(n) = s_{k(n)-1}$ and $\s_n = A_{t(n)}\rest{k(n)}$. Let 
	\[
		V_n = \bigcup_{m=n+1}^{k(n)} G_{\s_n\rest{m},t(n)}
	\]
	Then 
	\[
		V_n \subseteq \Psi_1^{-1}[\s_n\rest{(n+1)}]\times \Psi_2^{-1}[\s_n\rest{(n+1)}],
	\]
	and $\leb(V_n) = \epsilon\cdot (\Omega_{k(n)}-\Omega_n)$, so $\leb (\pi_i[V_n])\ge \sqrt{\epsilon} \sqrt{\Omega_{k(n)}-\Omega_{n}}$ for at least one $i\le 2$. 

	Let $n<n'$ be two elements of~$N$. Then
	 \[
	 A\rest{(n+1)}  = A_{t(n')}\rest{(n+1)} = A_{s_{k(n)}}\rest{(n+1)}
	 \]
	 is incomparable with $\s_n\rest{(n+1)}$, and so $\s_n\rest{(n+1)}$ and $\s_{n'}\rest{(n'+1)}$ are incomparable. For each $i\le 2$, $\pi_i[V_n]\subseteq \Psi_i^{-1}[\s_n\rest{(n+1)}]$ and the same holds for~$n'$, and so $\pi_i[V_n]$ and $\pi_i[V_{n'}]$ are disjoint. This shows that the weak total cost of the approximation $\seq{A_{s_k}}$ is bounded by $2/\sqrt{\epsilon}$. 
\end{proof}

\section{\texorpdfstring{$\+F$}{F}-bases} \label{sec:F-bases}

We want to adapt the proof from the previous section to the case of $k/n$-bases. In fact, it is useful and convenient to work in more generality. Recall the notation introduced in \cref{sub:capturing_the_columns_of_Omega}: if $F\subseteq \{1,2,\dots, n\}$, then $\pi_F\colon (2^\w)^n\to (2^\w)^F$ is the projection map erasing entries not indexed by elements of~$F$. For~$Z\in (2^\w)^n$ we also write $Z_F$ for~$\pi_F(Z)$.


\begin{defn} \label{def:F-base}
Let $\+F$ be a nonempty family of subsets of $\{1,\dots,n\}$. We say that~$A$ is an \emph{$\+F$-base} if there is a random tuple $Z \in (2^\w)^n$ such that $A \le_\Tur Z_F$ for all $F\in \+{F}$.
\end{defn}

We will prove that a set is an $\+F$-base if and only if it obeys $\cc_{\Om,p}$ for the appropriate choice of~$p$. The power~$p$ that corresponds to $\+F$ will be $1/\norm{F}$, where we define~$\norm{F}$ using a linear optimisation problem.

\subsection{The norm of~\texorpdfstring{\boldmath$\+F$}{F}} 
\label{sub:the_norm_of_F}

Fix a nonempty~$\+{F}\subseteq \+{P}(\{1,2,\dots, n\})$. We attempt to quantify the ``amount of disjointness'' present in $\+F$. 
\begin{itemize}
	\item An assignment $\seq{ x_F }_{F\in \+{F}}$ is a \emph{normalised weighting of the sets in $\+F$} if for all $F\in \+F$, $x_F$ is a nonnegative real number, and for all $i\in \{1,\dots, n\}$, $\sum \left\{ x_F \,:\, F\in\+F\text{ and }i\in F \right\} \le 1$. 
\end{itemize}
We let 
\[
	\norm{F} = \sup \left\{ \sum_{F\in \+F} x_F \,:\, \seq{x_F} \text{ is a normalised weighting of the sets in }\+F \right\}.
\]
For example, if $\emptyset\notin \+F$ and the sets in $\+F$ are pairwise disjoint, then $\norm{F} = |\+F|$. On the other hand, if $\bigcap\+F\neq\emptyset$ then $\norm{F}=1$. Since we have assumed that $\+F\neq\emptyset$, we always have $\norm{F}\ge 1$. If $\+F$ contains the empty set, then $\norm{F}=\infty$. Otherwise each weight in a normalised weighting $\seq{x_F}$ is bounded by~1, and so $\norm{F}\le |\+F|$. It is also the case that $\norm{F}\le n$; to see this, note that if $\emptyset\notin\+F$, then
$\sum_{F\in \+F} x_F \leq \sum_{i\le n}\sum \left\{ x_F \,:\, F\in\+F\text{ and }i\in F \right\} \leq \sum_{i\le n} 1 = n$.

\smallskip

The norm $\norm{F}$ is the solution to a linear optimisation (linear programming) problem in standard form: let $M$ be the $n\times |\+F|$-incidence matrix (the $(i,F)$-entry is 1 if $i\in F$, 0 otherwise). The problem is to maximise $\sum_{F} x_F$ under the constraints $\seq{x_F} \ge \zeroVector$ and $M\cdot \seq{x_F} \le \oneVector$ (where we think of ~$\seq{x_F}$ as a column). This problem is \emph{feasible} (the constraints are not contradictory), as is witnessed by the zero vector (or by the constant weighting $(1/|\+F|)\cdot \oneVector$, which witnesses that $\norm{F}\ge 1$). Further, if $\+F$ does not contain the empty set, then the problem is \emph{bounded} since the feasible solutions all have entries between~0 and~1. This implies that if $\+F$ does not contain the empty set, then the problem has an optimal solution, that is, there is a normalised weighting $\seq{x_F}$ such that $\sum_F x_F = \norm{F}$. Moreover, since the problem is defined with rational coefficients, $\norm{F}$ is rational and an optimal solution can be taken to consist of rational numbers; this is because $\seq{x_F}$ can be taken to be a \emph{basic feasible solution}, i.e., a vertex of the convex region defined by the constraints. For more information on linear programming, see for example~\cite{BT:97}.

\smallskip

A linear optimisation problem in standard form has a \emph{dual problem}. The dual for the problem defining $\norm{F}$ is to minimise $\sum_{i\le n} y_i$ where $\seq{y_i} \ge \zeroVector$ and $\seq{y_i}\cdot M\ge \oneVector$ (where we think of $\seq{y_i}$ as a row). Conceptually:
\begin{itemize}
	\item an $n$-tuple of nonnegative real numbers $\seq{y_i}$ is a \emph{weighting of coordinates, normalised for~$\+F$} if for all $F\in \+F$, $\sum_{i\in F} y_i\ge 1$. 
\end{itemize}
The dual problem is to minimise the sum of the weights of such a weighting. The strong duality theorem (see for example~\cite[Theorem 4.4]{BT:97}) says that as long as a linear optimisation problem has an optimal solution, then its dual problem also has an optimal solution, and the optimal values are the same. This gives us an alternate expression for $\norm{F}$ in the case that $\+F$ does not contain the empty set:
\[
	\norm{F} = \min \left\{ \sum_{i\le n} y_i \,:\, \seq{y_i} \text{ is a weighting of coordinates normalised for }\+F \right\}.
\]

\begin{example}\label{example:calculating_n_over_k}
Let $\+F$ be the collection of all $k$-element subsets of $\{1,2,\dots,n\}$. We can use both expressions for $\norm{F}$ to show that $\norm{F}=n/k$. 

In one direction, let $\seq{x_F}$ be the constant weighting of sets $x_F = \frac{n}{k}\big/\binom{n}{k}$. Each $i\le n$ is an element of precisely $\binom{n-1}{k-1} = \frac{k}{n}\binom{n}{k}$ many of the sets in~$\+F$ and so $\seq{x_F}$ is indeed a normalised weighting of the sets in~$\+F$. This weighting witnesses that $\norm{F}\ge n/k$. On the other hand, the constant weighting $y_i= {1/k}$ of coordinates is normalised for~$\+F$, so the dual expression shows that $\norm{F}\le n/k$. 
\end{example}


\subsection{\texorpdfstring{\boldmath$\+F$}{F}-bases: the easy direction} 
\label{sub:F-bases-the_easy_direction}

As promised above, we will show that a set is an~$\+F$-base if and only if it obeys $\cost_{\Omega, 1/\norm{F}}$. Actually this is not strictly true; it fails when $\norm{F}=1$. So we first discuss the extreme values. We note that for a nonempty family $\+F$ of subsets of $\{1,2,\dots, n\}$,
\begin{itemize}
	\item $\norm{F} = 1$ if and only if $\bigcap \+F \ne \emptyset$. If $\norm{F}>1$ then $\norm{F}\ge n/(n-1)$.
	\item $\norm{F} = \infty$ if and only if $\emptyset\in \+F$.
\end{itemize}
If $\norm{F}=1$, then every set is an $\+F$-base (this follows from the Ku\v{c}era--G\'{a}cs theorem), so no obedience of any cost function can be deduced. On the other hand, if $\norm{F}=\infty$, then the equivalence between being a base and obeying the cost function holds trivially: in that case being an $\+F$-base is the same as being computable, and the cost function $\cost_{\Omega,0}$ fails the limit condition and any set obeying it is computable. 

\smallskip

Henceforth we restrict ourselves to nonempty families $\+F$ satisfying $1<\norm{F}<\infty$. Note that for such a family, every $\+F$-base is an $(n-1)/n$-base. Fix~$n$ and let $\+F$ be such a family.

\begin{lemma} \label{lem:I-weak_obedience_of_F_implies_bases}
	Let~$\+I$ be an increasing computable sequence. Suppose that~$A$ is $K$-trivial and $\+I$-weakly obeys $\cost_{\Omega,1/\norm{F}}$. Then~$A$ is an $\+F$-base. 
\end{lemma}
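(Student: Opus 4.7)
The plan is to construct the $\+F$-base witness as $Z = \psi(\Omega)$, where $\psi\colon 2^\w \to (2^\w)^n$ is a computable measure-preserving bijection tailored to $\+F$ via an optimal fractional vertex cover. The construction generalises \cref{lem:I-weak_obedience_of_k_n_implies_bases}, whose $n$-column witness corresponds to the uniform weighting $y_i = 1/k$.

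After a preliminary LP reduction---whenever some $i$ has $y_i = 0$ in every optimal dual, replace each $F\ni i$ in $\+F$ by $F\setminus\{i\}$ and prune newly dominated sets, which preserves $\norm{F}$ and reduces the problem to the resulting antichain since any witness for the reduced family is a fortiori a witness for the original---one may assume an optimal rational weighting $\seq{y_i}_{i\le n}$ of coordinates, normalised for $\+F$, with $\sum_i y_i = \norm{F}$ and $y_i > 0$ for every $i$. Write $y_i = p_i/q$ over a common positive integer denominator and set $N = q\norm{F}$, so that $\sum_i p_i = N$, each $p_i \ge 1$, and $\sum_{i\in F} p_i \ge q$ for every $F\in\+F$. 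Define $\psi$ block-by-block: inside every consecutive block of $N$ bits of $2^\w$, route the first $p_1$ bits to coordinate~$1$, the next $p_2$ to coordinate~$2$, and so on. This is a computable measure-preserving bijection, so $Z = \psi(\Omega)$ is random in $(2^\w)^n$.

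For $F\in\+F$ set $G_\s = \pi_F(\psi([\s]))$. The array $\seq{G_\s}$ is uniformly clopen and nested; when $|\s|=mN$ the set $G_\s$ is a cylinder in $(2^\w)^F$ of measure $2^{-m\sum_{i\in F}p_i} \le 2^{-|\s|/\norm{F}}$, and in general $\leb(G_\s) \le^\times 2^{-|\s|/\norm{F}}$. Hence $(\seq{G_\s},\Omega)$ is a $1/\norm{F}$-Oberwolfach test capturing $Z_F$, and by \cref{prop:covering_p-OW-tests_by_p-Auckland_tests} it is covered by a $\cost_{\Omega,1/\norm{F}}$-bounded test in $(2^\w)^F$. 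Since $A$ is $K$-trivial it is low for random, so $Z$ is $A$-random and hence so is every projection $Z_F$; the $\+I$-analogue of \cref{prop:weak_obedience_and_computing} then yields $A \le_\Tur Z_F$ for every $F\in\+F$, so $Z$ witnesses that $A$ is an $\+F$-base. The main obstacle---absent in the uniform $k/n$-case---is the choice of $\psi$: the canonical $n$-column split gives each projection density $|F|/n$, which can be strictly less than $1/\norm{F}$ (for instance when $\+F = \{\{1,2\},\{3\}\}$ on $\{1,2,3\}$, where $|F|/n = 1/3$ for $F = \{3\}$ but $1/\norm{F} = 1/2$), so LP-driven non-uniform redistribution is essential to obtain the correct test bound.
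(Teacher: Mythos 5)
Your proposal is correct and is essentially the paper's own argument: the paper likewise fixes a rational optimal dual weighting $\seq{y_i}$, gives coordinate~$i$ a $y_i/\norm{F}$-part of~$\Omega$ so that every $Z_F$ contains at least a $1/\norm{F}$-fraction of the bits of~$\Omega$, and then combines \cref{prop:covering_columns_of_Omega} (which your block-by-block computation re-derives via \cref{prop:covering_p-OW-tests_by_p-Auckland_tests}) with lowness for randomness and the $\+I$-version of \cref{prop:weak_obedience_and_computing}. The only real difference is the treatment of coordinates with $y_i=0$: the paper simply lets those $Z_i$ be random relative to~$\Omega$ (joined together), whereas your LP reduction as stated either produces a witness with fewer than~$n$ components or clashes with the assumption that all $y_i>0$, so you still need that same van Lambalgen padding step to obtain the random $n$-tuple required by the definition of an $\+F$-base.
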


\begin{proof}
Let $\seq{y_i}$ be an optimal solution for the dual problem defining $\norm{F}$ (an optimal normalised weighting of coordinates). We may assume that each~$y_i$ is rational. For each $i\le n$, we let $Z_i$ be a $y_i/\|\+F\|$-part of $\Omega$, with the parts chosen disjointly; since
\[
 	\sum_{i<n} y_i/\norm{F} = 1,
\]
$\Omega$ is exhausted when it is distributed between the sequences~$Z_i$ in this way. (Formally, we let~$m$ be a common denominator of the fractions $y_i/\norm{F}$ and let~$Z_i$ be the join of $m\cdot y_i/\norm{F}$-many of the $m$-columns of~$\Omega$.) If~$y_i=0$, then we let~$Z_i$ be random relative to~$\Omega$ (joined with the other~$Z_j$ for which $y_j =0$). 

Now consider $F\in\+F$. Since $\sum_{i\in F} y_i \geq 1$, $Z_F$ accounts for at least $1/\norm{F}$ of~$\Omega$, and so \cref{prop:covering_columns_of_Omega} and the $\+I$-version of \cref{prop:weak_obedience_and_computing} show that $Z_F$ computes~$A$. Therefore, $Z$ witnesses that~$A$ is an $\+F$-base. 
\end{proof}

We note that if~$y_i$ is positive for all~$i$, then the distribution of the bits of~$\Omega$ as performed in the proof of \cref{lem:I-weak_obedience_of_F_implies_bases} gives a measure-preserving, computable isomorphism $j\colon 2^\w\to (2^\w)^n$ such that $j(\Omega)$ witnesses that~$A$ is an $\+F$-base. 

\smallskip

We will prove the following generalisations of \cref{prop:ce_k_n_bases,prop:general_k_n_base}.

\begin{proposition} \label{prop:ce_F_bases}
	Every c.e.\ $\+F$-base obeys $\cost_{\Omega,1/\norm{F}}$. 
\end{proposition}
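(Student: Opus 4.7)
The plan is to adapt the proof of \cref{prop:ce_half_bases_fully_obey} from the $1/2$-base case. Given a c.e.\ $\+F$-base~$A$ with witness $Z=(Z_1,\dots,Z_n)\in(2^\w)^n$ and, for each $F\in\+F$, a functional $\Psi_F$ with $\Psi_F(Z_F)=A$, I replace the rectangle $P^1\times P^2$ by the ``$\+F$-product class'' $P=\bigcap_{F\in\+F}\pi_F^{-1}[P^F]$, where $P^F\subseteq(2^\w)^F$ consists of those~$X$ for which $\Psi_F(X)$ does not lie strictly to the left of~$A$. For each dyadic $\epsilon>0$ I build clopen sets $G_{\tau,s}\subseteq \bigcap_{F\in\+F}\pi_F^{-1}[\Psi_{F,s}^{-1}[\tau]]$, refilled as mass exits $P_s$, under the same bound $\leb((G_{\tau,s}\setminus G_{\prec\tau,s})\cap P_s)\le \epsilon(\Omega_{|\tau|}-\Omega_{|\tau|-1})$ and the same notion of confirmation. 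The telescoping estimate $\leb(\bigcup_{\tau\prec A}G_\tau\cap P)\le\epsilon$ and the argument that failure of confirmation at some level forces $Z\in\bigcup_{\tau\prec A}G_\tau$ go through verbatim.

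The key new ingredient replaces the trivial product inequality $\leb(G)\le\leb(\pi_1 G)\cdot\leb(\pi_2 G)$ used in the $1/2$-case. I would invoke a Bollob\'as--Thomason / Finner-style generalisation of the Loomis--Whitney inequality: for any measurable $G\subseteq(2^\w)^n$ and any normalised weighting $\seq{x_F}_{F\in\+F}$ of the sets in~$\+F$,
\[
\leb(G)\ \le\ \prod_{F\in\+F}\leb(\pi_F[G])^{x_F}.
\]
Taking $\seq{x_F}$ to be an optimal normalised weighting (so $\sum_F x_F=\norm{F}$), raising both sides to the $1/\norm{F}$ power, and using that $\sum_F x_F/\norm{F}=1$, the weighted geometric mean is bounded by the maximum, yielding
\[
\max_{F\in\+F}\leb(\pi_F[G])\ \ge\ \leb(G)^{1/\norm{F}}.
\]

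With this inequality in hand, the dynamics proceed as in \cref{prop:ce_half_bases_fully_obey}. If some initial segment of~$A$ is never confirmed in any $\epsilon$-construction, then the sets produced by the $2^{-n}$-constructions, intersected with~$P$, form an $A$-difference test capturing~$Z$ whose $\Pi^0_1$ component is the $\+F$-product class $P$; \cref{lem:density_in_F_product_classes-minimal_witnesses} (the $\+F$-version of \cref{prop:half_bases_and_positive_density_in_product_classes}) combined with the relativised \cref{lem:density_zero_and_difference_tests} then contradicts the noncomputability of~$A$. Fixing an $\epsilon$ for which every initial segment of~$A$ is eventually confirmed, I extract the subenumeration $\seq{A_{s_k}}$ as before. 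At each cost-incurring stage I form the rectangle $V_k$ exactly as in the $1/2$-case and apply the generalised projection bound to charge $(\epsilon\cdot(\Omega_{k+1}-\Omega_{x_k}))^{1/\norm{F}}$ to some projection $\pi_F[V_k]$, for an index $F\in\+F$ depending on~$k$. The disjointness of the projections across distinct stages, for each fixed~$F$, follows from the incompatibility of the controlling strings exactly as in the $1/2$-case, so grouping terms by the charged~$F$ bounds the total $\cost_{\Omega,1/\norm{F}}$-cost by $|\+F|\cdot\epsilon^{-1/\norm{F}}$.

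The hardest step I expect is establishing the Loomis--Whitney / Finner inequality in precisely the form above and verifying that the optimal normalised weighting (which may have some $x_F=0$) yields a useful bound witnessed by some $F\in\+F$ rather than by a coordinate projection outside~$\+F$. Once this geometric input is in place, the combinatorial cost-accounting is a direct transcription of the $1/2$-base argument, with $\sqrt{\,\cdot\,}$ replaced by $(\,\cdot\,)^{1/\norm{F}}$ throughout.
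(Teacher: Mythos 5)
Your overall architecture is exactly the paper's: hungry sets $G_{\tau,s}$ contained in $\bigcap_{F\in\+F}\pi_F^{-1}[\Psi_{F,s}^{-1}[\tau]]$, an $A$-difference test whose closed component is $P=\bigcap_{F\in\+F}\pi_F^{-1}[P_F]$, and the Bollob\'as--Thomason generalisation of Loomis--Whitney (\cref{lem:LW-generalized}, packaged as \cref{lem:geometric}) producing, at each charging stage, some $F\in\+F$ with $\leb(\pi_F[V_k])\ge \leb(V_k)^{1/\norm{F}}$. Your derivation of the max-form from the weighted product inequality is correct, and the final accounting (disjointness of $\pi_F[V_k]$ across stages for fixed $F$, grouping charges by $F$, bound $|\+F|\cdot\epsilon^{-1/\norm{F}}$) matches the paper.

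The gap is in the density step. You invoke \cref{lem:density_in_F_product_classes-minimal_witnesses} as if it were ``the $\+F$-version'' of \cref{prop:half_bases_and_positive_density_in_product_classes}, i.e.\ as if it directly gave $\density{P}{Z}>0$ for the class $P$ built from the \emph{given} reductions $\Psi_F$. It does not: that lemma only asserts that $Z_F$ is a density-one point when $F$ is \emph{minimal} with $A\le_\Tur Z_F$, and the members of $\+F$ need not be minimal. For a non-minimal $F$ the join $Z_F$ can be Turing complete (e.g.\ $n=2$, $\+F=\big\{\{1\},\{2\},\{1,2\}\big\}$ with $(Z_1,Z_2)$ the halves of $\Omega$), hence not even a positive density point, and one can choose a functional computing $A$ from $Z_F$ whose associated class $P_F$ has density $0$ at $Z_F$ (the paper's footnote in \cref{sub:the_proof_of_cref_prop_ce_f_bases} shows how); then $\density{P}{Z}=0$ and the difference test may genuinely capture $Z$, so no contradiction is available. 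The missing idea is to choose the functionals before the construction starts: for each $F\in\+F$ fix a minimal $\hat F\subseteq F$ with $A\le_\Tur Z_{\hat F}$ and use a reduction consulting only the coordinates in $\hat F$, so that $P_F=Q_F\times(2^\w)^{F\setminus\hat F}$ with $Q_F\subseteq(2^\w)^{\hat F}$ effectively closed. Then \cref{lem:density_in_F_product_classes-minimal_witnesses} applies (its hypothesis holds because $\norm{F}>1$ forces $\bigcap\+F=\emptyset$, so $Z$ witnesses that $A$ is an $(n-1)/n$-base) and gives $\density{Q_F}{Z_{\hat F}}=1$, hence $\density{\pi_F^{-1}[P_F]}{Z}=1$ for every $F\in\+F$, and only then does the finite intersection satisfy $\density{P}{Z}=1>0$. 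Note that density one, not merely positive density, is needed here: unlike the $1/2$-case, where $P=P^1\times P^2$ is a product over disjoint coordinates and densities multiply, the classes $\pi_F^{-1}[P_F]$ overlap in coordinates, and positive density of each at $Z$ would not by itself yield positive density of their intersection.
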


\begin{proposition} \label{prop:general_F_base}
	Every $\+F$-base weakly obeys $\cost_{\Omega,1/\norm{F}}$. In fact, if~$\seq{A_s}$ is any computable approximation of an $\+F$-base~$A$, then there is a sub-approximation $\seq{A_{s(n)}}$ that witnesses that~$A$ weakly obeys $\cost_{\Omega,1/\norm{F}}$. 
\end{proposition}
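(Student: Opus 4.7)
The plan is to mimic the proof of \cref{prop:general_half_base_weakly_obeys}, working in the ambient space $(2^\w)^n$ instead of the plane $(2^\w)^2$, and replacing the elementary rectangle bound $\leb(V) \le \leb(\pi_1(V)) \cdot \leb(\pi_2(V))$ by a generalised Loomis--Whitney inequality that, for any open $V \subseteq (2^\w)^n$, produces some $F \in \+F$ with $\leb(\pi_F(V)) \ge \leb(V)^{1/\norm{F}}$. As promised in the introduction, such an inequality is the technical heart of the $\+F$-base theory.

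Fix an $\+F$-base $A$ with witness $Z = (Z_1,\dots,Z_n) \in (2^\w)^n$ and, for each $F \in \+F$, a functional $\Psi_F$ with $\Psi_F(Z_F) = A$. For each dyadic $\epsilon > 0$ and each string $\tau$, enumerate a clopen set $G_{\tau,s} \subseteq (2^\w)^n$ satisfying
\[
\pi_F(G_{\tau,s}) \subseteq \Psi_{F,s}^{-1}[\tau] \qquad \text{for every } F \in \+F,
\]
keeping the $G_\tau$ pairwise disjoint and ensuring $\leb(G_{\preceq\tau,s}) \le \epsilon(\Omega_{|\tau|} - \Omega_{|\tau|-1})$. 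Declare $\tau$ confirmed at stage $s$ when equality holds, and only enumerate mass into $G_\tau$ once its immediate predecessor is confirmed. Setting $G = \bigcup_{\tau \prec A} G_\tau$ we get $\leb(G) \le \epsilon$, and if some initial segment of $A$ is never confirmed then $Z \in G$. Since the hypothesis $\norm{F}>1$ forces $\bigcap \+F = \emptyset$, $Z$ also witnesses that $A$ is an $(n-1)/n$-base; hence $A$ is $K$-trivial, so low for random, so $Z$ is $A$-random and cannot be captured by an $A$-ML-test. Thus for some $\epsilon > 0$ every initial segment of $A$ is eventually confirmed in the $\epsilon$-construction.

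Fix such an $\epsilon$ and extract a subapproximation $\seq{A_{s_k}}$ by choosing $s_0 < s_1 < \cdots$ with $A_{s_k}\rest{(k+1)}$ confirmed at stage $s_k$. Let $N$ be the set of $m$ for which $\seq{A_{s_k}}$ has an $m$-stage, $k(m)$ the last such, $t(m) = s_{k(m)-1}$, $\s_m = A_{t(m)}\rest{k(m)}$, and
\[
V_m \;=\; \bigcup_{j=m+1}^{k(m)} G_{\s_m\rest{j},\,t(m)}.
\]
Pairwise disjointness of the $G_\tau$ gives $\leb(V_m) = \epsilon(\Omega_{k(m)} - \Omega_m)$, and $\pi_F(V_m) \subseteq \Psi_F^{-1}[\s_m\rest{(m+1)}]$ for every $F \in \+F$. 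For $m < m'$ in $N$ the strings $\s_m\rest{(m+1)}$ and $\s_{m'}\rest{(m'+1)}$ disagree at position $m$ (since $\s_m(m) \ne A(m) = \s_{m'}(m)$), so for each fixed $F \in \+F$ the sets $\pi_F(V_m)$ are pairwise disjoint as $m$ ranges over those indices to be assigned to $F$.

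Apply the generalised Loomis--Whitney inequality to each $V_m$ to obtain some $F_m \in \+F$ with
\[
\leb(\pi_{F_m}(V_m)) \;\ge\; \leb(V_m)^{1/\norm{F}} \;=\; \bigl(\epsilon(\Omega_{k(m)} - \Omega_m)\bigr)^{1/\norm{F}}.
\]
Grouping the $m \in N$ by their assigned $F_m$ and using the disjointness above,
\[
\epsilon^{1/\norm{F}} \sum_{m \in N} (\Omega_{k(m)} - \Omega_m)^{1/\norm{F}} \;\le\; \sum_{F \in \+F}\, \leb\!\left(\bigcup_{F_m = F} \pi_F(V_m)\right) \;\le\; |\+F|,
\]
bounding the weak total $\cost_{\Omega,1/\norm{F}}$-cost of $\seq{A_{s_k}}$ by $|\+F|/\epsilon^{1/\norm{F}}$, which is finite as required.

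The main obstacle is the generalised Loomis--Whitney inequality with exponent $1/\norm{F}$: the elementary rectangular bound of the $1/2$-base case fails here because $\pi_F(V_m)$ need not be a product and the family $\+F$ is arbitrary, so one needs a fractional Shearer/Loomis--Whitney-type inequality for open sets in $(2^\w)^n$ that is calibrated to the LP value $\norm{F}$. Granting such an inequality, the rest is a direct transcription of \cref{prop:general_half_base_weakly_obeys}.
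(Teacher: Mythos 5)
Your proposal is correct and takes essentially the same route as the paper: the paper's proof of \cref{prop:general_F_base} is exactly a transcription of \cref{prop:general_half_base_weakly_obeys} with hungry sets inside $\bigcap_{F\in\+F}\pi_F^{-1}\Psi_F^{-1}[\tau]$, the $A$-ML test contradiction via $K$-triviality (using that $\norm{F}>1$ makes $Z$ witness that $A$ is an $(n-1)/n$-base), and the rectangle bound replaced by the projection inequality you request. That inequality is not an obstacle but is already supplied by the paper as \cref{lem:geometric} (a consequence of the Shearer/Loomis--Whitney generalisation, \cref{lem:LW-generalized}), and your grouping of the sets $V_m$ by the chosen $F_m$ with the resulting bound $|\+F|/\epsilon^{1/\norm{F}}$ matches the paper's argument.
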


\Cref{example:calculating_n_over_k} shows that these propositions imply \cref{prop:ce_k_n_bases,prop:general_k_n_base}. We get an analogue of \cref{thm:more_on_k_n_bases}.

\begin{theorem} \label{thm:more_on_F_bases}
	The following are equivalent for a set~$A$:
	\begin{enumerate}
		\item $A$ is an $\+F$-base;
		\item $A$ obeys $\cost_{\Omega,1/\norm{F}}$;
		\item $A$ is $K$-trivial and weakly obeys $\cost_{\Omega,1/\norm{F}}$.
	\end{enumerate}
\end{theorem}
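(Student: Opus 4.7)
The plan is to mirror the proof of \cref{thm:more_on_k_n_bases}, treating the two pending propositions \cref{prop:ce_F_bases} and \cref{prop:general_F_base} as black boxes together with the tools already established in \cref{sec:cost_functions}.

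The two easy implications are immediate. For (2)$\then$(3), obedience trivially implies weak obedience. Since $1 < \norm{F} < \infty$, the exponent satisfies $1/\norm{F} \le 1$, and then $\limcost_{\Omega,1}\le \limcost_{\Omega,1/\norm{F}}$ eventually (both tend to $0$, but the latter tends to $0$ more slowly), so by \cref{prop:calculus_quote}, obedience to $\cost_{\Omega,1/\norm{F}}$ implies obedience to $\cost_{\Omega}$, i.e., $K$-triviality. For (3)$\then$(1), apply \cref{lem:I-weak_obedience_of_F_implies_bases} to the identity sequence $\+I$ (for which $\+I$-weak obedience is the same as weak obedience).

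The substantive implication is (1)$\then$(2). Let $A$ be an $\+F$-base. Since $1<\norm{F}$, every $\+F$-base is an $(n-1)/n$-base, and by the van Lambalgen argument sketched after \cref{thm:k_n_base}, $A$ is $K$-trivial. Fix an approximation $\seq{\bar A_t}$ witnessing $A \models \cost_{\Omega}$. By \cref{prop:general_F_base} there is a sub-approximation $\seq{A_s} = \seq{\bar A_{t(s)}}$ witnessing that $A$ weakly obeys $\cost_{\Omega,1/\norm{F}}$. This sub-approximation still witnesses full obedience to $\cost_\Omega$, since at each stage~$s$ where the sub-approximation changes, the $\cost_\Omega$-cost is bounded by the sum of the $\cost_\Omega$-costs accrued by $\seq{\bar A_t}$ between $t(s-1)$ and $t(s)$, by monotonicity of $\cost_\Omega$. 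In particular, by \cite[Fact~2.12]{Nies:CalculusOfCostFunctions}, $\seq{A_s}$ is $\omega$-computable; let $h$ bound the number of changes of $A_s(n)$.

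We now construct a c.e.\ ``change-set'' $C$ exactly as in the proof of \cref{thm:more_on_k_n_bases}. Let $i_n = \sum_{m\le n} h(m)$ and $\+I = \seq{i_n}$. Enumerate into~$C$ the number $i_{n-1}+j$ whenever the $j\tth$ change of $A_s(n)$ occurs. Then two telescoping/bookkeeping observations hold:
\begin{itemize}
 \item the total $\cost_\Omega$-cost of $\seq{C_s}$ is bounded by the total $\cost_\Omega$-cost of $\seq{A_s}$, so $C$ is $K$-trivial;
 \item the $\+I$-weak total $\cost_{\Omega,1/\norm{F}}$-cost of $\seq{C_s}$ equals the weak total $\cost_{\Omega,1/\norm{F}}$-cost of $\seq{A_s}$, so $C$ $\+I$-weakly obeys $\cost_{\Omega,1/\norm{F}}$.
\end{itemize}
By \cref{lem:I-weak_obedience_of_F_implies_bases}, $C$ is an $\+F$-base. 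Since $C$ is c.e., \cref{prop:ce_F_bases} tells us that $C$ fully obeys $\cost_{\Omega,1/\norm{F}}$. Finally, since $A \le_\Tur C$ (the approximation $\seq{A_s}$ is computable from the enumeration of $C$ together with $h$), \cref{prop:obedience_of_our_cost_functions_is_downwards_closed} gives $A \models \cost_{\Omega,1/\norm{F}}$, completing the proof.

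The only substantive difficulty is that the argument fully relies on \cref{prop:ce_F_bases} and \cref{prop:general_F_base}, whose proofs are the geometric heart of the paper and will require a Loomis--Whitney-style projection inequality controlled by the dual weighting $\seq{y_i}$; here the exponent $1/\norm{F}$ will enter naturally through the bound on the product of projection measures. The formal deduction of \cref{thm:more_on_F_bases} itself, as above, is routine bookkeeping built on \cref{sec:cost_functions}.
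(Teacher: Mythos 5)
Your proposal is correct and matches the paper's own treatment: the paper proves \cref{thm:more_on_F_bases} by running the proof of \cref{thm:more_on_k_n_bases} verbatim, using \cref{prop:ce_F_bases,prop:general_F_base} and replacing \cref{lem:I-weak_obedience_of_k_n_implies_bases} by \cref{lem:I-weak_obedience_of_F_implies_bases}, exactly as you do. Your added justifications (that the sub-approximation still fully obeys $\cost_\Omega$, and that $A\le_\Tur C$) are correct fillings-in of details the paper leaves implicit.
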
 

The proof of \cref{thm:more_on_F_bases} is identical to the proof of \cref{thm:more_on_k_n_bases}, except that we use \cref{prop:ce_F_bases,prop:general_F_base} and replace \cref{lem:I-weak_obedience_of_k_n_implies_bases} by \cref{lem:I-weak_obedience_of_F_implies_bases}.

\smallskip

As discussed above, if $y_i>0$ for all~$i$ then~$\Omega$ is a universal witness for $\+F$-bases, but only in a weak sense: if~$A$ is an $\+F$-base then we can divide the digits of~$\Omega$ effectively into~$n$ many parts that together witness that~$A$ is an $\+F$-base. However, unlike the case of $k/n$-bases, we cannot require that this is the even division of bits which gives the standard isomorphism between $2^\w$ and $(2^\w)^n$. For a very simple example, let $n=3$ and $\+F = \left\{ \big\{ 1,2\big\}, \big\{ 3\big\} \right\}$. Then $\norm{F}=2$, so \cref{thm:more_on_F_bases} says that being an $\+F$-base is the same as being a $1/2$-base. A $1/2$-base may not be computable from any of the 3-columns of~$\Omega$; but if $\bar \Omega_1,\bar \Omega_2,\bar\Omega_3,\bar\Omega_4$ are the four 4-columns of~$\Omega$ then the triple $(\bar\Omega_1, \bar\Omega_2, \bar\Omega_3\oplus \bar\Omega_4)$ witnesses that every $1/2$-base is an $\+F$-base. 


\subsection{A generalisation of the Loomis--Whitney inequality} 
\label{sub:a_generalisation_of_the_loomis_whitney_inequality}

We need a geometric lemma for the proofs of \cref{prop:ce_F_bases,prop:general_F_base}. In the proofs of \cref{prop:ce_half_bases_fully_obey,prop:general_half_base_weakly_obeys}, we relied on the fact that the area of a $2$-dimensional set is bounded by the product of the measures of its projections into each dimension. To generalise to $\+F$-bases, we need to bound the volume of an $n$-dimensional set in terms of its projections onto the subspaces corresponding to members of $\+F$. 

We again fix $n\ge 1$ and a family $\+F$ of subsets of $\{1,\dots, n\}$. Recall the definition of a normalised weighting of the sets in~$\+F$ that was used for the definition of $\norm{F}$: a sequence $\seq{x_F}_{F\in \+F}$ of nonnegative real numbers such that $\sum_{\{F\,:\,i\in F\}} x_F\le 1$ for all $i\le n$.

\begin{lem}\label{lem:LW-generalized}
Suppose that $\seq{x_F}$ is a normalised weighting of the sets in~$\+F$. If $U\subseteq (2^\omega)^n$ is Borel, then 
\[
\leb(U)\le \prod_{F\in \+F} \leb(\pi_{F}[U])^{x_F}.
\]
\end{lem}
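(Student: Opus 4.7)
The plan is to reduce the measure-theoretic inequality to a combinatorial statement about finite sets in a product and then apply Shearer's lemma (equivalently, the Bollob\'as--Thomason box inequality).

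By standard approximation (inner regularity by compact sets, followed by outer approximation of each compact set by finite unions of cylinders at a common level~$k$), it suffices to prove the inequality when $U = \bigcup_{s \in S}[s]$ for a finite $S \subseteq (2^k)^n$. In that case $\leb(U) = |S|/2^{kn}$ and $\leb(\pi_F U) = |\pi_F S|/2^{k|F|}$, so the inequality is equivalent to
\[
|S| \;\le\; 2^{k(n - \sum_{F} |F| x_F)} \prod_{F \in \+F} |\pi_F S|^{x_F}.
\]

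Next I introduce the slack variables $y_i = 1 - \sum_{F \ni i} x_F$, which are nonnegative since $\seq{x_F}$ is a normalised weighting and which satisfy $\sum_i y_i = n - \sum_F |F| x_F$. The augmented family $\+F' = \+F \cup \big\{\{1\},\dots,\{n\}\big\}$, weighted by $x_F$ on the original sets and by $y_i$ on the singleton $\{i\}$, has the property that for every $i$ the weights of the sets of $\+F'$ containing $i$ sum to $1$; in particular it is a \emph{fractional cover} of $\{1,\dots,n\}$. Shearer's lemma states that for any fractional cover $\{\alpha_E\}_{E \in \+E}$ of $\{1,\dots,n\}$ (meaning $\alpha_E \ge 0$ and $\sum_{E \ni i} \alpha_E \ge 1$ for each $i$) and any finite $T \subseteq \prod_i A_i$, one has $|T| \le \prod_E |\pi_E T|^{\alpha_E}$. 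Applied to $S$ and the augmented family, and using $|\pi_{\{i\}} S| \le 2^k$, this gives
\[
|S| \;\le\; \prod_F |\pi_F S|^{x_F} \prod_i |\pi_{\{i\}} S|^{y_i} \;\le\; 2^{k \sum_i y_i} \prod_F |\pi_F S|^{x_F},
\]
which is precisely the required bound.

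The main obstacle is Shearer's lemma itself. The cleanest proof takes $X$ uniform on $T$; using the chain rule together with the fact that conditioning reduces entropy, the fractional-cover condition yields $H(X) \le \sum_E \alpha_E H(X_E)$, and exponentiating (using $H(X) = \log|T|$ and $H(X_E) \le \log|\pi_E T|$) gives the result. A self-contained inductive proof via H\"older's inequality is also available if one prefers to avoid any use of entropy.
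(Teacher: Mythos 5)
Your proposal is correct and takes essentially the same route as the paper's proof: reduce by a standard approximation argument to clopen sets (the paper goes clopen $\to$ closed $\to$ $F_\sigma$ $\to$ Borel, using compactness to control the projections, exactly the fact your reduction implicitly relies on), recast the clopen case as a counting inequality for a finite relation, make the weighting tight by adding slack singletons, and invoke the discrete Shearer/Bollob\'as--Thomason inequality. The only divergence is at the combinatorial core: the paper proves that discrete inequality from scratch by induction on $\sum_i |X_i|$ via the weighted AM--GM inequality, whereas you cite Shearer's lemma and sketch its standard entropy proof --- a proof the paper itself acknowledges as classical (Chung--Graham--Frankl--Shearer, Madiman--Marcus--Tetali), so nothing essential is changed.
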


As we will see below, this result is a generalisation of the Loomis--Whitney inequality~\cite{LW:49}. It is not new; it seems to first appear in 1995 in Bollob{\'a}s and Thomason~\cite{BT:95} as an application of their ``box theorem'', where it is stated for sufficiently nice compact subsets of $\R^n$, though the restriction is not essential. To prove their main result, they (apparently independently) reprove a weaker generalisation of the Loomis--Whitney inequality that (in its discrete form) is due to Shearer in 1978. Shearer's result first appeared in~\cite{CGFS:86}, where it is proved from an inequality on entropies. This connection between entropy and combinatorics has become common; a recent paper of Madiman, Marcus, and Tetali~\cite{MMT:12} starts with:
\begin{quote}
It is well known in certain circles that there appears to exist an informal parallelism between entropy inequalities on the one hand, and set cardinality inequalities on the other.
\end{quote}
In their paper, in fact, the authors derive the discrete version of \cref{lem:LW-generalized} (i.e., \cref{clm:combinatorial} below) from one of their main results, a corresponding inequality on entropy. An earlier derivation, with entropy replaced by Kolmogorov complexity, was given by Romashchenko, Shen, and Vereshchagin~\cite{RSV:02}. \cref{clm:combinatorial} is an immediate consequence of their Theorems~1 and~2.

We provide a proof for completeness.

\begin{proof}
We will reduce the statement of the lemma to a statement of finite combinatorics. Before we do so, we make two simplifications:
\begin{enumerate}
	\item It suffices to prove the lemma for clopen sets~$U$. 
	\item We may assume that the weighting $\seq{x_F}$ is \emph{tight}: $\sum_{\{F\,:\,i\in F\}} x_F= 1$ for all $i\le n$.
\end{enumerate}

For (1), note that the desired property of the set~$U$ is certainly closed under taking countable increasing unions. If~$U$ is closed, then $U = \bigcap_k U_k$, where $\seq{U_k}$ is a decreasing sequence of clopen sets. The compactness of~$(2^\w)^n$ implies that $\pi_F[U] = \bigcap_k \pi_F[U_k]$, and so taking limits gives the desired inequality for all closed sets, and hence for all~$F_\sigma$ sets. We then use the regularity of Lebesgue measure to replace an arbitrary Borel~$U$ by an~$F_\sigma$ subset of the same measure. The projections of this subset may be smaller, but of course, this makes the inequality stronger.

\smallskip

For (2), we add ``slack sets''. Let 
	$\hat{\+F} = \+F \cup \left\{ \{i\} \,:\, i\le n \right\}.$
For every $F\in \+F$ that is not a singleton, let $\hat{x}_F = x_F$. For each $i\le n$ such that $\{i\}\notin \+F$, let $\hat{x}_{\{i\}} = 1-\sum_{\{F\,:\,i\in F\}} x_F$; if $\{i\}\in \+F$, then let $\hat{x}_{\{i\}} = x_{\{i\}}+ (1-\sum_{\{F\,:\,i\in F\}} x_F)$. Then $\seq{\hat{x}_F}$ is a tight weighting of the sets in $\hat{\+F}$. If the lemma holds for~$U$ and for~$\hat{\+F}$, then it also holds for~$U$ and for~$\+F$, as $x_F\le \hat{x}_{F}$ for all $F\in \+F$, $\leb(\pi_F[U])\le 1$, and $\leb(\pi_{\{i\}}[U])^{\hat x_{\{i\}}} \le 1$ for all~$\{i\}\in \hat{\+F}\setminus \+F$. 

\medskip

So we suppose that~$U$ is clopen and that $\seq{x_F}$ is tight. Fix $m<\w$ sufficiently large so that $2^{-m}$ is smaller than the granularity of~$U$. This means that~$U$ is the union of basic clopen sets of the form $\prod_{i\le n}[\s_i]$ where each~$\s_i$ is a binary string of length~$m$.\footnote{Note that $X_i$ does not depend on~$i$ here, but we need to index these sets since we soon deal with projections of $\prod_i X_i$.} For $i\le n$, let $X_i$ be the set~$2^m$ of binary strings of length~$m$. Define a relation $R\subseteq \prod_{i\le n} X_i$ by letting $\bar \s = (\s_1,\dots, \s_n)\in R$ if $[\bar \s] = \prod_i [\s_i]\subseteq U$ (if $\bar \s\notin R$ then $[\bar \s]\cap U = \emptyset$). The measure of~$U$ is the \emph{relative size} $d(R)$ of the relation~$R$, the number of $n$-tuples in~$R$ divided by the number of possible tuples, i.e., the size of $\prod_i X_i$; in this case the relative size is $|R|/2^{mn}$.

Extend the notation $\pi_F$ to give the obvious function from $\prod_{i\le n} X_i$ to $\prod_{i\in F}X_i$ (erasing entries). For $F\in \+F$, the forward image $\pi_F[R]$ is the relation associated with $\pi_F[U]$: for $\bar \s\in \prod_{i\in F} X_i$, if $\bar \s\in \pi_F[R]$ then $[\bar \s]\subseteq \pi_F[U]$; otherwise $[\bar \s]$ and $\pi_F[U]$ are disjoint. Thus $\leb(\pi_F[U])$ is the density of the relation $\pi_F[R]$, namely $d(\pi_F[R]) = |\pi_F[R]|/ |\prod_{i\in F}X_i|$. The desired inequality for~$U$ follows from a   combinatorial statement:

\begin{claim} \label{clm:combinatorial}
	Let~$\seq{X_i}_{i\le n}$ be a sequence of nonempty finite sets. Let $R\subseteq \prod_{i\le n} X_i$ be a relation. Then
	\[
		d(R) \le \prod_{F\in \+F} d(\pi_F[R])^{x_F}.
	\]
\end{claim}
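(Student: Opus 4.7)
The plan is to deduce the combinatorial claim from Shearer's entropy inequality applied to the uniform distribution on $R$. Since the reduction step (2) in the proof of the lemma has already placed us in the tight case, I shall assume $\sum_{F \ni i} x_F = 1$ for every $i\le n$.

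First, let $\mathbf{Y} = (Y_1,\ldots,Y_n)$ be a random variable uniformly distributed on $R$, and for each $F\in\+F$ write $Y_F = (Y_i)_{i\in F}$. Then the Shannon entropy satisfies $H(\mathbf{Y}) = \log|R|$, while $Y_F$ is supported on $\pi_F[R]$, so $H(Y_F) \le \log|\pi_F[R]|$. The goal is therefore to show
\[
H(\mathbf{Y}) \le \sum_{F\in\+F} x_F \cdot H(Y_F),
\]
which is Shearer's inequality. This is a routine consequence of the chain rule and the fact that conditioning reduces entropy: writing $[k]=\{1,\ldots,k\}$, the chain rule gives $H(Y_F) = \sum_{i\in F} H(Y_i \mid Y_{F\cap[i-1]})$ for each $F$, and conditioning on fewer variables can only increase entropy, so $H(Y_i\mid Y_{F\cap[i-1]}) \ge H(Y_i \mid Y_{[i-1]})$. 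Summing the resulting inequalities weighted by $x_F$ and using the covering condition $\sum_{F\ni i} x_F \ge 1$ yields
\[
\sum_{F} x_F H(Y_F) \ge \sum_{i\le n} \Big(\sum_{F\ni i} x_F\Big) H(Y_i\mid Y_{[i-1]}) \ge \sum_{i\le n} H(Y_i \mid Y_{[i-1]}) = H(\mathbf{Y}).
\]
Exponentiating the combination of the above inequalities gives $|R| \le \prod_{F\in\+F} |\pi_F[R]|^{x_F}$.

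To pass from cardinalities to densities, divide by $\prod_{i\le n} |X_i|$. By tightness,
\[
\prod_{i\le n} |X_i| \,=\, \prod_{i\le n} |X_i|^{\sum_{F\ni i} x_F} \,=\, \prod_{F\in\+F} \Big( \prod_{i\in F} |X_i| \Big)^{x_F},
\]
so dividing the cardinality inequality term by term yields $d(R) \le \prod_{F\in\+F} d(\pi_F[R])^{x_F}$, which is exactly the claim.

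The main (modest) obstacle is really just Shearer's inequality itself, which is a standard ingredient from information theory; no deeper difficulty is anticipated. If one prefers to avoid entropy, an essentially equivalent proof can be given directly by induction on $n$ (grouping the sets in $\+F$ by whether they contain the coordinate $n$ and applying H\"older's inequality to the slices of $R$ above each value in $X_n$), or via Kolmogorov complexity as in Romashchenko--Shen--Vereshchagin~\cite{RSV:02}.
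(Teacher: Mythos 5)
Your proof is correct, but it takes a genuinely different route from the one in the paper. You derive the claim from the fractional form of Shearer's entropy inequality: put the uniform distribution on $R$, bound $H(Y_F)\le \log|\pi_F[R]|$, prove Shearer via the chain rule plus ``conditioning reduces entropy'', and then pass from cardinalities to densities by using tightness to factor $\prod_{i\le n}|X_i|$ as $\prod_{F\in\+F}\bigl(\prod_{i\in F}|X_i|\bigr)^{x_F}$. All steps check out (your appeal to tightness is legitimate, since the claim is only invoked after the paper's reduction to tight weightings; the only detail you leave implicit is the trivial case $R=\emptyset$, where the uniform distribution is undefined but the inequality holds vacuously). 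The paper deliberately avoids this machinery: it proves the claim ``for completeness'' by an elementary induction on $\sum_{i\le n}|X_i|$, splitting one coordinate set $X_{i^*}$ into two pieces, applying the inductive hypothesis to the two restricted relations, and recombining with the weighted arithmetic mean--geometric mean inequality (again using $\sum_{F\ni i^*}x_F=1$ to cancel the size of the split coordinate). Your argument is shorter and makes the conceptual source of the inequality transparent, but it is essentially the entropy derivation the paper attributes to Shearer and to Madiman--Marcus--Tetali; the paper's induction buys self-containment, needing nothing beyond AM--GM. Your closing remark that one could instead induct on $n$ and apply H\"older to the slices over $X_n$ is also a viable elementary alternative, closer in spirit to, though not identical with, the paper's induction on $\sum_i|X_i|$.
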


To visualise the situation, consider the example that defined $2/3$-bases, i.e., $n=3$ and $\+F = \big\{ \{1,2\}, \{1,3\}, \{2,3\} \big\}$. We use the optimal weighting $x_F = 1/2$ for all $F\in \+F$, which witnesses the fact that $\norm{F} = 3/2$. The measure-theoretic inequality we need bounds $\leb(U)$ by the product of the square roots of the measures of its projections onto the three orthogonal 2-dimensional planes. The corresponding combinatorial statement involves a tripartite graph with three vertex sets $X_1$, $X_2$ and $X_3$: for $F = \{i,j\}\in\+F$, $\pi_F[R]$ is a set of edges between $X_i$ and~$X_j$; the combinatorial lemma bounds the relative size of the triangle relation in the graph.

\smallskip

We prove \cref{clm:combinatorial} by induction on $\sum_{i\le n} |X_i|$ (in the example above, by induction on the number of vertices in the tripartite graph). The base case is $|X_i|=1$ for all~$i$, in which case either~$R$ is empty and has density 0, or~$R = \prod_i X_i$ and each $\pi_F[R] = \prod_{i\in F}X_i$ has density 1. For the induction step, choose some $i^*\le n$ such that $|X_{i^*}|>1$. Partition $X_{i^*}$ into two nonempty sets~$Y_0$ and~$Y_1$. For both $j=0,1$, we apply the induction hypothesis to~$R_j$, the restriction of~$R$ to those tuples whose $(i^*)\tth$ entry lies in~$Y_j$. Note that $R = R_0\cup R_1$ is a disjoint union.
We let:
\begin{itemize}
	\item for $j< 2$, $y_j = |Y_j|$ and $r_j = |R_j|$;
	\item for $F\in \+F$ and $j<2$, $s_{F,j} = |\pi_F[R_j]|$;
	\item for nonempty $F\subseteq \{1,\dots, n\}$, $z_F = \prod_{i\in F\setminus \{i^*\}} |X_i|$;
	\item and for brevity, $z^* = z_{\{1,\dots, n\} } = \prod_{i\ne i^*} |X_i|$. 
\end{itemize}
Let $\+F^* = \{ F\in \+F\,:\, i^*\in F\}$. The induction hypothesis yields, for both $j=0,1$,
\[
	\frac{r_j}{y_j z^*} \le 
	\prod_{F\in \+F^*} \left(	\frac{s_{F,j}}{y_j z_F}	\right)^{x_F} \cdot
	\prod_{F\in \+F\setminus \+F^*}	\left(	\frac{s_{F,j}}{z_F}	\right)^{x_F}.
\]
The assumption that $\sum_{F\in \+F^*}x_F = 1$ means that the occurrences of~$y_j$ cancel out. 
Let
\[
	q = z^*\cdot 
	\prod_{F\in \+F^*} \frac{1}{{z_F}^{x_F}} \cdot 
	\prod_{F\in \+F\setminus \+F^*} \left( \frac{|\pi_F[R]|}{z_F} \right)^{x_F}.
\]
For all $F\in \+F$ and $j<2$, $s_{F,j}\le |\pi_F[R]|$ and $x_F\ge 0$, so 
\begin{equation} \label{eqn:inductive_hypothesis_simplified}
		r_j \le q\cdot \prod_{F\in \+F^*} {s_{F,j}}^{x_F}.
\end{equation}
We observe that if $i^*\in F$, then $\pi_F(R) = \pi_F[R_0] \cup \pi_F[R_1]$ is a disjoint union. So to complete the induction step, we need to show that 
\[
	\frac{r_0+r_1}{(y_0+y_1) z^*} \le 
	\prod_{F\in \+F^*} \left(	\frac{s_{F,0}+ s_{F,1}}{(y_0+y_1) z_F}	\right)^{x_F} 
	\prod_{F\in \+F\setminus \+F^*}	\left(	\frac{|\pi_F[R]|}{z_F}	\right)^{x_F}.
\]
Equivalently, we need to show that 
\[
	(r_0+r_1) \le q \cdot \prod_{F\in \+F^*} (s_{F,0}+ s_{F,1})^{x_F}.
\]
This follows from \cref{eqn:inductive_hypothesis_simplified} and the inequality 
\[
	\prod_{F\in \+F^*} {s_{F,0}}^{x_F} + \prod_{F\in \+F^*} {s_{F,1}}^{x_F} \le
		\prod_{F\in \+F^*} (s_{F,0}+ s_{F,1})^{x_F},
\]
which 
follows from the weighted arithmetic mean--geometric mean inequality using the assumption that $\sum_{F\in \+F^*}x_F = 1$.
\end{proof}

\Cref{lem:LW-generalized} can be seen as a generalisation of the Loomis--Whitney inequality~\cite{LW:49}, which bounds the measure of an $n$-dimensional set using the measures of its $(n-1)$-dimensional projections. To see the connection, we give a proof of the Loomis--Whitney result from ours.

\begin{cor}[Loomis and Whitney~\cite{LW:49}]
Let $n\ge 1$. For $j\le n$, let $\pi_j = \pi_{\{1,\dots, n\}\setminus \{j\}}$ be the projection from $[0,1]^n$ to the $(n-1)$-dimensional orthogonal subspace $x_j = 0$. If $U\subset [0,1]^n$ is Borel, then
\[
\leb(U)^{n-1}\le \prod_{j\le n} \leb(\pi_{j}[U]).
\]
\end{cor}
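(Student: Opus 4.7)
The plan is to apply \cref{lem:LW-generalized} with the family $\+F = \big\{\{1,\dots,n\}\setminus\{j\} : j\le n\big\}$, i.e., all $(n-1)$-element subsets of $\{1,\dots,n\}$. This is precisely the family considered in \cref{example:calculating_n_over_k} with $k=n-1$, where it was shown that $\norm{F} = n/(n-1)$ and that the uniform weighting $x_F = 1/(n-1)$ is a (tight) optimal normalised weighting. Indeed, each index $i\le n$ belongs to exactly $n-1$ members of $\+F$ (namely those that omit some $j\ne i$), so $\sum_{\{F\,:\,i\in F\}} x_F = (n-1)\cdot \tfrac{1}{n-1} = 1$.

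With this choice, \cref{lem:LW-generalized} yields
\[
\leb(U) \le \prod_{F\in \+F} \leb(\pi_F[U])^{1/(n-1)} = \prod_{j\le n} \leb(\pi_j[U])^{1/(n-1)},
\]
and raising both sides to the $(n-1)$-th power gives the desired inequality. The only point that needs a brief remark is that \cref{lem:LW-generalized} is formulated in $(2^\omega)^n$, whereas the corollary is stated in $[0,1]^n$; however, the standard binary-expansion map is a measure-preserving isomorphism between these spaces (up to a nullset), and it commutes with coordinate projections, so the inequality transfers immediately.

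There is no real obstacle here, since all the work was done in proving \cref{lem:LW-generalized}; the corollary is simply the specialisation to the family of coordinate hyperplane projections with the uniform optimal weighting. The only mildly non-trivial ingredient is recognising that $x_F = 1/(n-1)$ is a valid normalised weighting, which is the content of the verification above.
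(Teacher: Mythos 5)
Your proposal is correct and is essentially the paper's own proof: apply \cref{lem:LW-generalized} to the family of all $(n-1)$-element subsets with the uniform normalised weighting $x_F = 1/(n-1)$ and raise to the $(n-1)$-th power. The remark about transferring between $(2^\omega)^n$ and $[0,1]^n$ via binary expansion is a reasonable extra detail that the paper leaves implicit.
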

\begin{proof}
Apply \cref{lem:LW-generalized} with $\+F$ being the set of subsets of~$\{1,2,\dots, n\}$ of size $n-1$, and use the optimal weighting $x_F = 1/(n-1)$. 
\end{proof}

\Cref{lem:LW-generalized} gives an upper bound for the size of a set in terms of its projections. We will use it in the reverse direction, to give a lower bound for the size of one of the projections in terms of the size of the set. This can be stated in a clean, sharp form using $\norm{F}$. As usual, fix a family~$\+F$. 

\begin{lem}\label{lem:geometric}
Let $U\subseteq (2^\omega)^n$ be Borel. There is an $F\in \+F$ such that
\[
\leb(\pi_{F}[U])\ge \leb(U)^{1/\norm{F}}.
\]
Moreover, this cannot be improved: there is a $U\subseteq (2^\omega)^n$, of arbitrary measure $\le 1$, such that for all $F\in \+F$, $\leb(\pi_{F}[U])\le \leb(U)^{1/\norm{F}}$.
\end{lem}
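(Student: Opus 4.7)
The plan is to deduce the existence statement from \cref{lem:LW-generalized} applied to an optimal normalised weighting of the sets in $\+F$, and to establish sharpness by constructing a suitable product set using an optimal weighting of coordinates (i.e., the optimum of the dual problem).

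For the existence claim, assume first that $\leb(U)>0$ (the case $\leb(U)=0$ being trivial). Let $\seq{x_F}_{F\in\+F}$ be an optimal rational solution of the primal problem, so $\sum_{F\in\+F} x_F = \norm{F}$ and $\seq{x_F}$ is a normalised weighting of the sets in~$\+F$. (Since $\norm{F}\ge 1$, at least one $x_F$ is strictly positive.) Suppose for contradiction that $\leb(\pi_F[U])<\leb(U)^{1/\norm{F}}$ for every $F\in\+F$; note that then $\leb(\pi_F[U])>0$ for all $F$, as otherwise \cref{lem:LW-generalized} would force $\leb(U)=0$. Applying \cref{lem:LW-generalized} with the weighting $\seq{x_F}$, and using that raising a strict inequality between positive reals to a positive power preserves strictness, we get
\[
\leb(U)\le \prod_{F\in\+F}\leb(\pi_F[U])^{x_F} < \prod_{F\in\+F}\leb(U)^{x_F/\norm{F}} = \leb(U)^{\sum_F x_F/\norm{F}} = \leb(U),
\]
a contradiction.

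For sharpness, use strong duality: let $\seq{y_i}_{i\le n}$ be an optimal weighting of coordinates normalised for~$\+F$, so $\sum_{i\le n} y_i = \norm{F}$ and $\sum_{i\in F} y_i \ge 1$ for every $F\in\+F$. Given a target measure $\alpha\in(0,1]$, for each $i\le n$ pick a Borel set $U_i\subseteq 2^\w$ with $\leb(U_i) = \alpha^{y_i/\norm{F}}$ (this is a number in $(0,1]$ since $\alpha\le 1$ and $y_i\ge 0$), and let $U = \prod_{i\le n} U_i\subseteq (2^\w)^n$. Then
\[
\leb(U) = \prod_{i\le n}\alpha^{y_i/\norm{F}} = \alpha^{\sum_i y_i/\norm{F}} = \alpha,
\]
and for each $F\in\+F$, $\pi_F[U] = \prod_{i\in F} U_i$, so
\[
\leb(\pi_F[U]) = \alpha^{\sum_{i\in F} y_i/\norm{F}} \le \alpha^{1/\norm{F}} = \leb(U)^{1/\norm{F}},
\]
where the inequality uses $\alpha\le 1$ together with the dual feasibility inequality $\sum_{i\in F}y_i\ge 1$. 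This yields the desired example for any prescribed $\alpha\in(0,1]$.

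The only real subtlety is the handling of degenerate cases (projections of measure zero and $\leb(U)=0$), but these are absorbed by the above dichotomy; no genuine obstacle arises, because the two halves of the lemma are exactly primal and dual certificates for the linear program defining $\norm{F}$.
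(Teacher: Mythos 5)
Your proposal is correct and follows essentially the same route as the paper: the existence half is obtained by applying the generalised Loomis--Whitney inequality (\cref{lem:LW-generalized}) with an optimal normalised weighting of the sets in $\+F$, and sharpness comes from a product set built from an optimal dual weighting of coordinates, with the same telescoping computations. Your extra care with measure-zero degeneracies and the explicit remark that some $x_F>0$ are harmless refinements of the paper's argument, not a different method.
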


\begin{proof}
Let $U$ be Borel and assume that $\leb(\pi_{F}[U]) < \leb(U)^{1/\norm{F}}$ for all $F\in \+F$. Let $\seq{x_F}$ be an optimal solution for the definition of $\norm{F}$ (a normalised weighting of the sets in~$\+F$ such that $\sum x_F = \norm{F}$). Apply \cref{lem:LW-generalized} using $\seq{x_F}$ to get
\[
\leb(U)\le \prod_{F\in \+F} \leb(\pi_{F}[U])^{x_F} < \prod_{F\in \+F} \leb(U)^{x_F/\norm{F}} = \leb(U),
\]
where the strict inequality follows from the fact that $\norm{F}>0$ and so $x_F>0$ for some~$F$. This is a contradiction, so there must be some $F\in \+F$ such that $\leb(\pi_{F}[U])\ge \leb(U)^{1/\norm{F}}$.

To prove sharpness, fix a measure $c$ (which must be in $[0,1]$). Let $y\in\R^n$ be an optimal solution for the dual problem defining~$\norm{F}$: a normalised weighting of the coordinates such that $\sum_i y_i = \norm{F}$. Let $U = \prod_{i\le n} \left[0,c^{y_i/\norm{F}}\right]$. Note that $\leb(U) = \prod_{i\le n} c^{y_i/\norm{F}} = c$. Fix any $F\in \+F$. Since $\sum_{i\in F} y_i \ge 1$ and $c\le 1$,
\[
\leb(\pi_{F}[U]) = \prod_{i\in F} c^{y_i/\norm{F}} = c^{\sum_{i\in F} y_i/\norm{F}} \le c^{1/\norm{F}} = \leb(U)^{1/\norm{F}}.\qedhere
\]
\end{proof}


\subsection{The proofs of Propositions~\ref{prop:ce_F_bases} and~\ref{prop:general_F_base}} 
\label{sub:the_proof_of_cref_prop_ce_f_bases}





We extend the proofs of \cref{prop:ce_half_bases_fully_obey,prop:general_half_base_weakly_obeys}. Say ${Z}$ witnesses that $A$ is an $\+F$-base; we fix functionals $\Psi_F$ for $F\in \+F$ such that $\Psi_F({Z}_F) = A$. The hungry sets~$G_{\tau}$ only contain tuples $ X\in (2^\w)^n$ such that $\Psi_F({X}_F)\succeq \tau$ for all $F\in \+F$. If every $\epsilon$-construction failed, then in the c.e.\ case we would capture $ Z$ by a difference test based on the effectively closed class $P = \bigcap_{F\in \+F} \pi_F^{-1}[P_F]$, where $P_F\subseteq (2^\w)^{F}$ is the class of oracles~$Y\in (2^\w)^F$ for which $\Psi_F(Y)$ does not lie to the left of~$A$. We need to show that this is impossible. As in the case of $1/2$-bases we use the concept of Lebesgue density; again by Lemma~\ref{lem:density_zero_and_difference_tests} due to~\cite{BienvenuEtAl:DenjoyDemuthDensity}, it suffices to show that the density $\density{P}{ Z}$ is positive. 

We show that \emph{if we choose the functionals cleverly}, then the density $\density{P}{ Z}$ is actually 1. We will want to show that $\density{P_F}{ Z_F}=1$ for all $F\in \+F$; this implies that $\density{\pi_F^{-1}[P_F]}{ Z}=1$, from which $\density{P}{ Z}=1$ follows. However, consider the redundant case $n=2$ and $\+F = \big\{ \{1\}, \{2\}, \{1,2\} \big\}$. In general, for any~$F$, we could choose $\Psi_F$ in such a way that $P_F$ is contained in an arbitrary effectively closed subset of $(2^\w)^F$.\footnote{If~$Q\subseteq (2^\w)^F$ is effectively closed and $\Gamma(Z_F)=A$, then we modify~$\Gamma$ so that when we discover that $\s$ drops out of~$Q$ at stage~$s$, we map all $X\succeq \s$ to $\Gamma_s(X)\conc 0^\infty$, which would lie to the left of~$A$.}
For $F = \{1,2\}$ and $Z = \Omega$, we have $Z_F = \Omega$, which is complete, and so we could choose $P_F$ so that $\density{P_F}{\Omega} = 0$. However,~$Z$ does witness that $1/2$-bases are $\+F$-bases too. The problem occurs because the reduction~$\Psi_F$ does not need to consult both parts of the oracle. 
We prove:

\begin{lem} \label{lem:density_in_F_product_classes-minimal_witnesses}
Suppose that $A$ is an $(n-1)/n$-base as witnessed by $ Z$. If $F\subseteq\{1,\dots,n\}$ is minimal such that $A\le_\Tur Z_F$, then $ Z_F$ is a density-one point for effectively closed classes in $(2^\w)^{F}$.
\end{lem}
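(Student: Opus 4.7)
The plan is to derive a contradiction from the assumption that $Z_F$ is not a density-one point for some effectively closed class $C\subseteq (2^\w)^F$. First, via the canonical measure-preserving isomorphism $j_{|F|}\colon 2^\w\to (2^\w)^F$, I reinterpret $Z_F$ as a point of $2^\w$ which is random (being a projection of the random tuple $Z$) yet fails to be a density-one point for the $\Pi^0_1$ class $j_{|F|}^{-1}(C)$. By the result of Bienvenu et al.~\cite{BienvenuEtAl:DenjoyDemuthDensity}, $Z_F$ is then almost-everywhere dominating, and combined with Kjos-Hanssen et al.~\cite{Kjos.Miller.ea:11}, $Z_F$ is LR-hard, i.e., $\emptyset'\le_{\textup{LR}}Z_F$; so every $Z_F$-Martin-L\"of test is covered by an $\emptyset'$-Martin-L\"of test.

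Next, I use the minimality of $F$ to locate a useful complement. Assuming $A$ is noncomputable (otherwise the lemma is trivial with $F=\emptyset$), $F$ is nonempty. Since $A$ is an $(n-1)/n$-base, $A\leT Z_{\{1,\dots,n\}\setminus\{j\}}$ for every $j\le n$; together with the minimality of $F$, this forces $F$ to be properly contained in some $(n-1)$-subset, and hence $\bar F:=\{1,\dots,n\}\setminus F$ is nonempty. By van Lambalgen applied to the random tuple $Z$, $Z_{\bar F}$ is random relative to $Z_F$, and the LR-hardness of $Z_F$ upgrades $Z_{\bar F}$ to being $\emptyset'$-random, i.e., $2$-random.

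The crucial step is to boost this to $2$-randomness of $Z_{\bar F}$ relative to $Z_{F\setminus\{i\}}$ for a fixed $i\in F$. The idea is that since $Z_{F\setminus\{i\}}\leT Z_F$ and $\emptyset'\le_{\textup{LR}}Z_F$, every $(Z_{F\setminus\{i\}}\oplus\emptyset')$-Martin-L\"of test should be covered (uniformly) by a $Z_F$-Martin-L\"of test: queries to $Z_{F\setminus\{i\}}$ are replaced by queries to $Z_F$ using the Turing reduction, and the $\emptyset'$-component is absorbed into $Z_F$ using LR-hardness. Hence $Z_{\bar F}$, being $Z_F$-random by van Lambalgen, avoids all $(Z_{F\setminus\{i\}}\oplus\emptyset')$-tests, i.e., is $2$-random relative to $Z_{F\setminus\{i\}}$. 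Making this closure property precise---rather than invoking an off-the-shelf closure lemma for $\le_{\textup{LR}}$, which may not hold in full generality---will be the main technical hurdle, and will likely require a direct ML-test argument.

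Finally, the contradiction: let $F'=\{1,\dots,n\}\setminus\{i\}$, so $|F'|=n-1$ and, by the $(n-1)/n$-base property, $A\leT Z_{F'}=Z_{\bar F}\oplus Z_{F\setminus\{i\}}$. Since $A$ is $K$-trivial and hence $\Delta^0_2$, relativizing to the base $Z_{F\setminus\{i\}}$ the standard fact that no $2$-random set Turing-computes a noncomputable $\Delta^0_2$ set yields $A\leT Z_{F\setminus\{i\}}$, directly contradicting the minimality of $F$. This shows that $Z_F$ must indeed be a density-one point for every effectively closed class in $(2^\w)^F$.
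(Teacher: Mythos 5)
Your skeleton resembles the paper's (the density theorem of Bienvenu et al., van Lambalgen, minimality of $F$, and a minimal-pair-with-the-jump argument), but the step you yourself flag as the ``main technical hurdle'' is a genuine gap, and it is bigger than you suggest. From a lightface density failure of the whole tuple $Z_F$ you only obtain the unrelativized statement $\emptyset'\le_{LR}Z_F$. Your proposed absorption, namely that $Z_{F\setminus\{i\}}\leT Z_F$ and $\emptyset'\le_{LR}Z_F$ give $Z_{F\setminus\{i\}}\oplus\emptyset'\le_{LR}Z_F$, is an instance of forming joins on the left of $\le_{LR}$, which is not a known closure property of LR-reducibility and has no routine test-covering proof (the ``$\emptyset'$-component'' cannot simply be absorbed: $\emptyset'\le_{LR}Z_F$ gives no way to simulate $\emptyset'$-queries inside a $Z_F$-test). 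Moreover, even if that absorption held, it would only make $Z_{\bar F}$ Martin-L\"of random relative to $Z_{F\setminus\{i\}}\oplus\emptyset'$, which you conflate with $2$-randomness relative to $Z_{F\setminus\{i\}}$, i.e.\ randomness relative to $(Z_{F\setminus\{i\}})'$. The relativized ``standard fact'' you invoke at the end needs the latter (or at least weak $2$-randomness relative to $Z_{F\setminus\{i\}}$, so as to avoid null $\Pi^0_2(Z_{F\setminus\{i\}})$ classes). With randomness relative to $Z_{F\setminus\{i\}}\oplus\emptyset'\ge_\Tur Z_{F\setminus\{i\}}\oplus A$ one only gets, via the relativized base-for-randomness theorem, that $A$ is $K$-trivial relative to $Z_{F\setminus\{i\}}$ --- not $A\leT Z_{F\setminus\{i\}}$ --- so the contradiction with minimality does not materialise.

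This is exactly why the paper does not start from a lightface failure for the whole tuple. It assumes instead that a single coordinate $Z_i$ fails to be a density-one point \emph{relative to} $Y_i=Z_{F\setminus\{i\}}$; the relativized density theorem then directly yields $Z_F\ge_{LR}Y_i'$, which is precisely the strength needed to conclude that $Z_{\bar F}$ is $2$-random relative to $Y_i$ and to run the minimal-pair argument against the minimality of $F$. The lightface conclusion for the whole tuple $Z_F$ is then assembled from these coordinatewise facts by induction, using the van Lambalgen-type lemma for density (\cref{lem:weak_van_Lambalgen_for_density}). To repair your argument you would either have to prove the much stronger boosting $(Z_{F\setminus\{i\}})'\le_{LR}Z_F$ from $\emptyset'\le_{LR}Z_F$ and $Z_{F\setminus\{i\}}\leT Z_F$ (which does not follow), or restructure the proof along the paper's coordinatewise lines.
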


Given the lemma, for each~$F\in \+F$ we choose some minimal $\hat F\subseteq F$ such that $A\le_\Tur Z_{\hat F}$; for~$\Psi_F$ we choose a functional that only looks at the columns indexed by elements of~$\hat F$. We then have that $P_F = Q\times (2^\w)^{F\setminus \hat F}$ where $Q\subseteq (2^\w)^{\hat F}$ is effectively closed. \Cref{lem:density_in_F_product_classes-minimal_witnesses} says that $\density{Q}{ Z_{\hat F}}=1$, from which it follows that $\density{P_F}{ Z_F}=1$, as required. As mentioned earlier, the fact that $\norm{F}>1$ means that $ Z$ witnesses that~$A$ is a $(n-1)/n$-base, so the lemma applies. 

\smallskip

To prove \cref{lem:density_in_F_product_classes-minimal_witnesses}, we use a weak van-Lambalgen-type property for Lebesgue density:

\begin{lemma} \label{lem:weak_van_Lambalgen_for_density}
Let $X_0,X_1\in 2^\w$. Suppose that $X_0$ is a density one point relative to~$X_1$, and $X_1$ is a density one point relative to~$X_0$. Then $X = (X_0,X_1)$ is a density one point.
\end{lemma}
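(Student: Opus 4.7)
The plan is to show that for any effectively closed $C\subseteq (2^\omega)^2$ with $(X_0,X_1)\in C$, we have $\density{C}{(X_0,X_1)}=1$. The key idea is a Fubini-style decomposition combined with two successive applications of the density hypotheses---first on $X_1$ (to control the $X_0$-section of $C$), then on $X_0$ (to control how this section behaves as we vary the first coordinate).

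Fix such a $C$, and consider the $X_0$-section $C^{X_0}=\{z:(X_0,z)\in C\}$, which is $\Pi^0_1(X_0)$ and contains $X_1$. By the hypothesis on $X_1$, $\density{C^{X_0}}{X_1}=1$, so for each rational $\delta>0$ there is an integer $N_\delta$ with $\mu(C^{X_0}\cap [X_1\upto m])\ge (1-\delta)2^{-m}$ for every $m\ge N_\delta$. The idea now is to lift the conclusion ``$X_0$ has this property'' to ``most $y$ near $X_0$ have this property.''

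To do that, define
\[
V_m^\delta \;=\; \{\,y\in 2^\omega \,:\, \mu(C^y\cap [X_1\upto m]) < (1-\delta)2^{-m}\,\}.
\]
Because $C$ is $\Pi^0_1$, the map $y\mapsto \mu(C^y\cap[X_1\upto m])$ is upper semicomputable from $X_1$, so each $V_m^\delta$ is $\Sigma^0_1(X_1)$ uniformly in $m$. Hence $W=W_\delta := \bigcap_{m\ge N_\delta} (2^\omega\setminus V_m^\delta)$ is a $\Pi^0_1(X_1)$ class, and by the choice of $N_\delta$ it contains $X_0$. Applying the hypothesis on $X_0$ to $W$, we get $\density{W}{X_0}=1$, so for any rational $\delta'>0$ there exists $N'$ such that $\mu(W\cap [X_0\upto n])\ge (1-\delta')2^{-n}$ for all $n\ge N'$.

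For $n\ge \max(N_\delta,N')$, Fubini yields
\[
\mu\bigl(C \cap ([X_0\upto n]\times [X_1\upto n])\bigr)
\;=\; \int_{[X_0\upto n]} \mu(C^y\cap [X_1\upto n])\,dy
\;\ge\; \int_{W\cap [X_0\upto n]} (1-\delta)2^{-n}\,dy
\;\ge\; (1-\delta)(1-\delta')\,2^{-2n},
\]
so $\mu(C\mid [X_0\upto n]\times [X_1\upto n])\ge (1-\delta)(1-\delta')$ for all sufficiently large $n$. Since $\delta,\delta'>0$ were arbitrary, $\density{C}{(X_0,X_1)}=1$. The only point that needs care---and is the main obstacle to confirm---is the uniform effectivity of the measure bounds defining $V_m^\delta$, namely that the measure of the $y$-section of a $\Pi^0_1$ class, intersected with a fixed cylinder $[X_1\upto m]$, is upper semicomputable in $y$ uniformly from $X_1$; this is a standard fact that ensures $W$ really is $\Pi^0_1(X_1)$ and hence the hypothesis on $X_0$ applies to it.
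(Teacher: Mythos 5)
Your proof is correct and is essentially the paper's own argument: you apply the density hypothesis on $X_1$ to the section $C^{X_0}$, form the $\Pi^0_1(X_1)$ class of first coordinates whose sections have large conditional measure above $[X_1\uhr m]$ for $m\ge N_\delta$ (the paper's class $P$), apply the density hypothesis on $X_0$ to it, and finish with Fubini on the product cylinders $[X_0\uhr n]\times[X_1\uhr n]$. The effectivity point you flag (upper semicomputability in $y$, relative to $X_1$, of $\mu(C^y\cap[X_1\uhr m])$) is exactly the standard fact the paper uses implicitly when asserting that $P$ is effectively closed relative to $X_1$, so there is no gap.
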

\begin{proof} Let $ C \sub 2^\w \times 2^\w$ be an effectively closed set such that $X \in C$. For $Z\in 2^\w$, let $C_Z = \{ Y\in 2^\w\,:\, (Z,Y)\in C\}$. Let $\epsilon >0$. Since $X_1\in C_{X_0}$ and $C_{X_0}$ is effectively closed relative to $X_0$, there is an $n^*$ such that for all $m\ge n^*$, $\leb(C_{X_0} \!\mid\! X_1\rest{m})\ge 1-\epsilon$. Now let 
	\[
		P = \left\{ Z\in 2^\w\,:\, (\forall m\ge n^*)\; \leb( C_Z \!\mid\! X_1\rest{m}) \ge 1-\epsilon \right\}.
	\]
The set $P$ is effectively closed relative to~$X_1$, so there is an $n^{**}\ge n^*$ such that for all $m\ge n^{**}$, $\leb(P\!\mid\! X_0\rest{m})\ge 1-\epsilon$. So if $m\ge n^{**}$, then $\leb(C\!\mid\! X\rest{2m})\ge (1-\epsilon)^2$. 
\end{proof}


\begin{proof}[Proof of \cref{lem:density_in_F_product_classes-minimal_witnesses}]
%
By permuting, we may assume that $F= \{1,2,\dots, k\}$ for some $k<n$. Let $W = (Z_{k+1},\dots, Z_n)$. For $i\le k$ let $Y_i= (Z_1,\dots, Z_{i-1},Z_{i+1},\dots, Z_k)$. 

First we see that for all $i\le k$, $Z_i$ is a density 1 point relative to $Y_i$. Suppose not. By \cite{BienvenuEtAl:DenjoyDemuthDensity}, a random set~$X$ that is not a density 1 point is \emph{LR-hard}: $\emptyset'\le_{LR} X$, which means that every $X$-random set is 2-random. Relativising, we see that $(Z_i, Y_i)\ge_{LR} Y_i'$. Now~$W$ is random relative to $ Z_F = (Z_i,Y_i)$ and so is $2$-random relative to~$Y_i$. Every weakly 2-random set forms a minimal pair with $\emptyset'$. Relativising to~$Y_i$, every set that is computable from both~$(W,Y_i)$ and $Y_i'$ is $Y_i$-computable. Since $(W,Y_i)$ consists of $n-1$ many columns of $ Z$, it computes~$A$. Also $A$ is $\Delta^0_2$, so it  certainly is $Y_i'$-computable. Hence $A\le_\Tur Y_i$, contradicting the minimality of~$F$. 

Now by induction on $i\le k$, we see that $(Z_1,\dots, Z_i)$ is a density 1 point relative to $(Z_{i+1},\dots, Z_k)$. This is already established for $i=1$. Let $i>1$ and suppose that $(Z_1,\dots, Z_{i-1})$ is a density 1-point relative to $(Z_i,\dots, Z_k)$. We use \cref{lem:weak_van_Lambalgen_for_density} relativised to $(Z_{i+1},\dots, Z_k)$ (and the fact that $Z_i$ is a density 1 point relative to~$Y_i$) to obtain the desired result. 
\end{proof}

For the benefit of the reader, we sketch the proof of \cref{prop:ce_F_bases} (\cref{prop:general_F_base} is again easier). 

\begin{proof}[Sketch of the proof of Proposition~\ref{prop:ce_F_bases}]
	We explain how to modify the proof of \cref{prop:ce_half_bases_fully_obey}. Let~$A$ be a c.e.\ $\+F$-base, witnessed by the tuple $Z=(Z_1,Z_2,\dots, Z_n)$. For $F\in \+F$ wisely choose a functional $\Psi_F$ such that $\Psi_F(Z_F) = A$, as discussed after the statement of \cref{lem:density_in_F_product_classes-minimal_witnesses}; it only looks at oracles for a minimal $\hat F\subseteq F$. For each $F\in \+F$ and $s<\w$ we let $P_{F,s}$ be the set of $X\in (2^\w)^F$ such that $\Psi_{F,s}(X)$ does not lie to the left of $A_s$. We let $P_s = \bigcap_{F\in \+F} \pi^{-1}_F [P_{F,s}]$. 

	Again we fix a dyadic rational $\epsilon>0$, and enumerate clopen sets $G_{\tau,s}\subseteq (2^\w)^n$, with $\Psi_{F,s}(\pi_F(X))\succeq \tau$ for all $F\in \+F$ and $X\in G_{\tau,s}$. The goal $\epsilon \cdot (\Omega_{|\tau|-\Omega_{|\tau|-1}})$ for $(G_{\tau}\setminus G_{\tau^-})\cap P$ is the same, as well as the confirmation process and the instructions of how to increase each $G_{\tau,s}$. 

	The definitions, at the end of the construction, of~$P$ and~$G$ are the same, as well as the argument that $\leb(G\cap P)\le \epsilon$. Similar also is the argument that if some initial segment of~$A$ is confirmed at only finitely many stages then $Z\in G\cap P$. If this happens for every~$\epsilon$, then $Z$ is captured by the $A$-difference test $\seq{P\cap G_\epsilon}$. As before it follows that $\density{P}{Z}= 0$. As described above, for each $F\in \+F$, since $Z_{\hat F}\in P_{\hat F}$, $\density{P_{\hat F}}{Z_{\hat F}} = 1$ (\cref{lem:density_in_F_product_classes-minimal_witnesses}), and so $\density{P_F}{Z_F} = 1$, so $\density{\pi^{-1}_F[P_F]}{Z} = 1$, so $\density{P}{Z}=1$. 

	We again choose~$\epsilon$ such that in the $\epsilon$-construction, every initial segment of~$A$ is confirmed infinitely often. As above, we define the increasing computable sequence~$\seq{s_k}$ so that $A_{s(k)}\rest{(k+1)}$ is confirmed at stage~$s_k$. We also define~$V_k$ exactly as above. Again the fact that every string $A_{s_k}\rest{x+1}, \dots, A_{s_k}\rest{(k+1)}$ is confirmed implies that $\leb(V_k)= \epsilon\cdot (\Omega_{k+1}-\Omega_x)$. For every $F\in \+F$ and every~$k$, $\pi_F[V_k]\subseteq \Psi_F^{-1}[A_{s_k}\rest{(x_k+1)}]$, which is disjoint from $P_{F,s_{k'}}$ for all $k'>k$, whereas $\pi_F[V_k]\subseteq P_{F,s_k}$ for all~$k$. Hence for $k<k'$ we get $\pi_F[V_k]\cap \pi_F[V_{k'}] = \emptyset$ for all $F\in \+F$. Finally, \cref{lem:geometric} shows that for every~$k$ there is some $F\in \+F$ such that 
	 \[
	 \leb(\pi_F[V_k])\ge (\epsilon\cdot (\Omega_{k+1}-\Omega_x))^{1/\norm{F}}. 
	 \]
	This shows that the total $\cost_{\Omega, 1/\norm{F}}$-cost of this enumeration is bounded by $|\+F|/ \epsilon^{1/\norm{F}}$. 
\end{proof}


\section{Consequences of the characterisation of \texorpdfstring{$\+F$}{F}-bases}

The generality of the development in the previous section allows us to prove a number of interesting results. The first was already mentioned, namely the characterisation of $k/n$-bases: \cref{example:calculating_n_over_k} shows that \cref{prop:ce_F_bases,prop:general_F_base} imply \cref{prop:ce_k_n_bases,prop:general_k_n_base} and so complete our proof of \cref{thm:more_on_k_n_bases,thm:k_n_base}.

\subsection{Cyclic \texorpdfstring{\boldmath $k/n$}{k/n}-bases}

Note that $\binom{n}{k}$ can be quite large compared to $n$, especially if $k\approx n/2$. This makes the definition of a $k/n$-base look very demanding, as  it requires a set to be computable from a large number of different random tuples. It turns out that we can get away with a weaker hypothesis. Fix natural numbers $0<k<n$. For each $i\le n$, let $F_i = \{i, i+1 \pmod{n},\dots, i+k-1 \pmod{n}\}$. Let $\+F = \{F_i\}_{i<n}$. We call a set~$A$ a \emph{cyclic $k/n$-base} if it an $\+F$-base. Note that this definition only requires~$A$ to be computable from~$n$ distinct tuples. And yet, it is enough to capture that~$A$ is a $k/n$-base.

\begin{prop}
A set is a cyclic $k/n$-base if and only if it is a $k/n$-base.
\end{prop}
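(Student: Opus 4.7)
The plan is to apply the characterisation theorem \cref{thm:more_on_F_bases} directly: being an $\+F$-base depends on $\+F$ only through the quantity $\norm{F}$, so it suffices to show that the cyclic family $\+F = \{F_i\}_{i<n}$ has the same norm as the family of all $k$-subsets of $\{1,\dots,n\}$, namely $n/k$ (as computed in \cref{example:calculating_n_over_k}).

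One direction is trivial: every $k/n$-base is a cyclic $k/n$-base, because the cyclic sets $F_i$ are among all $k$-subsets, and restricting the family $\+F$ only weakens the requirement. For the nontrivial direction, I compute $\norm{F}$ for the cyclic family in two steps, using the primal and dual expressions given in \cref{sub:the_norm_of_F}. On the primal side, each index $i \in \{1,\dots,n\}$ lies in exactly $k$ of the cyclic sets $F_j$ (namely those with $j \in \{i-k+1,\dots,i\} \pmod n$), so the constant weighting $x_{F_i} = 1/k$ is a normalised weighting of the sets in $\+F$, witnessing $\norm{F} \ge n/k$. On the dual side, each $F_i$ has exactly $k$ elements, so the constant weighting of coordinates $y_i = 1/k$ satisfies $\sum_{j\in F_i} y_j = 1$ for every $i$; this is a coordinate weighting normalised for $\+F$, and its total weight $n/k$ shows $\norm{F} \le n/k$. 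Combining, $\norm{F} = n/k$.

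Now by \cref{thm:more_on_F_bases}, a set~$A$ is a cyclic $k/n$-base if and only if $A$ obeys $\cost_{\Omega, 1/\norm{F}} = \cost_{\Omega, k/n}$. By \cref{thm:more_on_k_n_bases}, this holds if and only if $A$ is a $k/n$-base. This completes the equivalence.

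There is no real obstacle here; the heavy lifting has already been done in the general $\+F$-base theory. The only thing to verify is the norm computation, and even that is symmetric enough to yield to the constant weightings in both the primal and the dual. The conceptual content is that the general $\+F$-base framework is ``coarser'' than one might expect: it collapses down to the single parameter $\norm{F}$, so two combinatorially different families (the $\binom{n}{k}$ many $k$-subsets versus the $n$ many cyclic intervals) yield the same ideal whenever their norms agree.
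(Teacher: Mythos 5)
Your proof is correct and follows essentially the same route as the paper: the trivial containment one way, and for the converse the computation $\norm{F}=n/k$ via the constant primal weighting $x_{F_i}=1/k$ and the constant dual weighting $y_i=1/k$, followed by an appeal to \cref{thm:more_on_F_bases} and \cref{thm:more_on_k_n_bases}. There is nothing to add.
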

\begin{proof}
Clearly, every $k/n$-base is a cyclic $k/n$-base. So assume that $A$ is a cyclic $k/n$-base. Let $\+F$ be as above. So $A$ is an $\+F$-base. We show that $\norm{F} = n/k$; \cref{thm:more_on_F_bases,thm:more_on_k_n_bases} imply that $A$ is a $k/n$-base.

Again we use the duality in the definition of~$\norm{F}$. To bound the norm from below consider the constant weighting $x_F = 1/k$ for all $F\in \+F$. This is normalised since every $i\le n$ is an element of precisely $k$ many sets in~$\+F$. Hence $\norm{F}\ge n/k$. From above, consider the weighting $y_i = 1/k$; each set in~$\+F$ has size~$k$ and so $\seq{y_i}$ is normalised for~$\+F$. Hence $\norm{F}\le n/k$. 
\end{proof}

Since every $k/n$-base is witnessed by the $n$-columns of~$\Omega$, so is every cyclic $k/n$-base.

\subsection{Degenerate \texorpdfstring{\boldmath $k/n$}{k/n}-bases}

Assume that $1<k<n$. We call $A$ a \emph{degenerate} $k/n$-base if there is a random tuple $ Z$ that witnesses that $A$ is a $k/n$-base but this is not tight: there is some $G\subseteq\{1,\dots,n\}$ such that $|G|<k$ and $A\leq_T Z_G$. We show that degenerate $k/n$-bases must obey cost functions that are stronger than $\cc_{\Om,k/n}$.

\begin{prop}\label{prop:degenerate}
Let $p=\max\big\{\frac{k}{n+1},\frac{k-1}{n-1}\big\}$. A set $A$ is a degenerate $k/n$-base if and only if it is a $p$-base.
\end{prop}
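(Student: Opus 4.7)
The plan is to apply \cref{thm:more_on_F_bases} to a suitable family of subsets. Write $\+F_k$ for the collection of all $k$-element subsets of $\{1,\dots,n\}$. First I would observe that $A$ is a degenerate $k/n$-base if and only if there exists $G\subseteq\{1,\dots,n\}$ with $|G|<k$ for which $A$ is an $(\+F_k\cup\{G\})$-base: the single random tuple demanded by the $\+F$-base definition captures simultaneously both the $k/n$-base property and the extra reduction from $Z_G$. By \cref{thm:more_on_F_bases} this is in turn equivalent to $A\in\+B_{1/\|\+F_k\cup\{G\}\|}$. Since the $\+B_q$ form a linearly ordered family of ideals, the class of degenerate $k/n$-bases therefore equals $\+B_{p^\ast}$, where $1/p^\ast=\min_{|G|<k}\|\+F_k\cup\{G\}\|$, and it remains to show $p^\ast=p$.

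Next I would reduce to the computation of a single norm. Permutation symmetry of $\+F_k$ shows that $\|\+F_k\cup\{G\}\|$ depends only on $|G|$; write $N(j)$ for this value. If $G\subseteq G'$, then any coordinate weighting $\seq{y_i}$ that is dual-feasible for $\+F_k\cup\{G\}$ is automatically dual-feasible for $\+F_k\cup\{G'\}$ (the constraint $\sum_{i\in G'}y_i\ge 1$ follows from $\sum_{i\in G}y_i\ge 1$ and nonnegativity), so by strong duality $N$ is non-increasing. Thus $\min_{|G|<k}N(|G|)=N(k-1)$, and the task becomes to show that $N(k-1)=\min\{(n+1)/k,(n-1)/(k-1)\}$.

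I would compute $N(k-1)$ by exhibiting matching primal and dual solutions, taking $G=\{1,\dots,k-1\}$. In both cases the optimal dual has $y_i=1/(k-1)$ for $i\in G$ and a single uniform value $c$ for $i\notin G$, and the analysis splits according to whether $|G^c|=n-k+1$ is at least $k$. In Case 1 ($n\ge 2k-1$), take $c=1/k$: for a $k$-subset $F$ with $a=|F\cap G|$, one checks $\sum_{i\in F}y_i=a/(k-1)+(k-a)/k\ge 1$ with equality at $a=0$, and the optimal primal is supported on $\{G\}$ together with the $k$-subsets of $G^c$ (which exist since $|G^c|\ge k$), summing to $(n+1)/k$. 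In Case 2 ($n\le 2k-1$), every $k$-subset intersects $G$ in at least $2k-1-n$ elements, and with $c=(n-k)/((k-1)(n-k+1))$ the dual constraint for a $k$-subset becomes $a+k(n-k)\ge(k-1)(n-k+1)$, which is tight precisely at $a=2k-1-n$; the corresponding primal is supported on $\{G\}$ together with the $k$-subsets containing $G^c$, with weights forced by complementary slackness, summing to $(n-1)/(k-1)$. Combining the two cases yields $N(k-1)=\min\{(n+1)/k,(n-1)/(k-1)\}=1/p$, hence $p^\ast=p$.

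The main obstacle is the algebraic bookkeeping in Case 2: one must verify that the non-uniform dual weighting satisfies $\sum_{i\in F}y_i\ge 1$ for every $k$-subset $F$ (not merely for the extreme ones), that the primal weights on the supporting $k$-subsets $F\supseteq G^c$ are nonnegative (in particular $x_G=(n-k)/(k-1)\ge 0$, which uses $n\ge k$), and that the collection of such $F$ is nonempty (using $2k-1-n\le k-1$). Unlike Case 1, here every $k$-subset meets $G$ nontrivially, so the tightness condition is governed by how deeply $F$ penetrates $G^c$ rather than by disjointness from $G$, and the arithmetic is somewhat less transparent.
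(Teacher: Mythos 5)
Your proposal is correct and follows essentially the same route as the paper: reduce to the family consisting of all $k$-element subsets together with one set $G$ of size $k-1$, invoke \cref{thm:more_on_F_bases}, and compute the norm by exhibiting matching primal and dual certificates in the same two cases $n\ge 2k-1$ and $n<2k-1$, with exactly the dual weightings the paper uses. The only difference is that you justify the reduction from arbitrary $|G|<k$ to $|G|=k-1$ explicitly (via permutation symmetry and monotonicity of the norm under enlarging $G$, plus nestedness of the ideals $\+B_q$), a step the paper leaves implicit when it simply notes that degenerate $k/n$-bases coincide with $\+F$-bases for $G=\{1,\dots,k-1\}$.
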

\begin{proof} 
Let $\+F$ consist of all $k$-element subsets of $\{1,\dots,n\}$ along with $G = \{1,\dots,k-1\}$. Note that a set is a degenerate $k/n$-base if and only if it is an $\+F$-base, so by \cref{thm:more_on_F_bases}, all we have to do is prove that $\norm{F}=1/p$. Let $M$ be the matrix from the definition of $\|\+F\|$. There are two cases.

\emph{Case 1:} $2k-1\leq n$. In this case, it is easy to check that $p = k/(n+1)$. Consider the following vector $x\in\R^{|\+F|}$. We will indicate the coordinate of $x$ corresponding to $F\in\+F$ by $x_F$. Let $x_G = 1$. If $F$ is a $k$-element subsets of $\{k-1,\dots,n-1\}$, let $x_F = \frac{n-k+1}{k\binom{n-k+1}{k}}$ (note that such sets exists because $n-k+1\geq k$). Let the other coordinates of $x$ be $0$. We claim that $Mx = \1$. If $i\in \{0,\dots,k-2\}$, then $Mx(i) = \sum \{x_F\,:\, F\in\+F\text{ and }i\in F\} = x_G = 1$. If $i\in \{k-1,\dots,n-1\}$, then $i$ is in a fraction of $k/(n-k+1)$ of the $k$-element subsets of $\{k-1,\dots,n-1\}$. There are $\binom{n-k+1}{k}$ such sets $F$, each with $x_F = \frac{n-k+1}{k\binom{n-k+1}{k}}$, so $Mx(i) = \sum \{x_F\,:\, F\in\+F\text{ and }i\in F\} = 1$. This proves that $Mx = \1$, hence
\[
\|\+F\| \geq \sum_{F\in\+F} x_F = 1 + \binom{n-k+1}{k}\frac{n-k+1}{k\binom{n-k+1}{k}} = \frac{n+1}{k} = 1/p.
\]

Next consider the following vector $y\in R^n$. For $i\in \{0,\dots,k-2\}$, let $y_i = 1/(k-1)$. For $i\in\{k-1,\dots,n-1\}$, let $y_i = 1/k$. We claim that $M^Ty \geq \1$. Again, we use elements of $\+F$ to index the corresponding dimensions. Note that $M^Ty(G) = \sum_{i\in G} y_i = (k-1)\frac{1}{k-1} = 1$. For any other $F\in\+F$ we have $M^Ty(F) = \sum_{i\in F} y_i \geq \sum_{i\in F} \frac{1}{k} = k\frac{1}{k} = 1$. This proves that $M^Ty \geq \1$, hence
\[
\|\+F\| \leq \sum_{i<n} y_i = (k-1)\frac{1}{k-1} + (n-k+1)\frac{1}{k} = \frac{n+1}{k} = 1/p.
\]
Therefore, $\|\+F\| = 1/p$.

\emph{Case 2:} $2k-1 > n$. In this case, it is easy to check that $p = (k-1)/(n-1)$. Consider the following vector $x\in\R^{|\+F|}$. Let $x_G = (n-k)/(k-1)$. If $F$ is a $k$-element subset of $\{0,\dots,n-1\}$ that contains $\{k-1,\dots,n-1\}$, let $x_F = 1/\binom{k-1}{2k-1-n}$ (note that such sets exist because $k>n-k+1$). Let the other coordinates of $x$ be $0$. We claim that $Mx = \1$. If $i\in \{0,\dots,k-2\}$, then $i$ is in $G$ and in a fraction of $(2k-1-n)/(k-1)$ of the $k$-element subsets of $\{0,\dots,n-1\}$ that contain $\{k-1,\dots,n-1\}$. There are $\binom{k-1}{2k-1-n}$ such sets. Therefore, $Mx(i) = \sum \{x_F\,:\, F\in\+F\text{ and }i\in F\} = (n-k)/(k-1) + (2k-1-n)/(k-1) = 1$. On the other hand, if $i\in \{k-1,\dots,n-1\}$, then $i$ is not in $G$ but is in every $k$-element subset of $\{0,\dots,n-1\}$ that contains $\{k-1,\dots,n-1\}$. So $Mx(i) = \sum \{x_F\,:\, F\in\+F\text{ and }i\in F\} = 1$. This proves that $Mx = \1$, hence
\[
\|\+F\| \geq \sum_{F\in\+F} x_F = \frac{n-k}{k-1} + \binom{k-1}{2k-1-n}\frac{1}{\binom{k-1}{2k-1-n}} = \frac{n-1}{k-1} = 1/p.
\]

Next consider the following vector $y\in R^n$. For $i\in \{0,\dots,k-2\}$, let $y_i = 1/(k-1)$. For $i\in\{k-1,\dots,n-1\}$, let $y_i = \frac{n-k}{(n-k+1)(k-1)}$. We claim that $M^Ty \geq \1$. As in Case~1, $M^Ty(G) = \sum_{i\in G} y_i = (k-1)\frac{1}{k-1} = 1$. Consider any other $F\in\+F$. At least $k-(n-k+1) = 2k-n-1$ coordinates in $F$ are from $\{0,\dots,k-2\}$. Therefore, $M^Ty(F) = \sum_{i\in F} y_i \geq (2k-n-1)\frac{1}{k-1} + (n-k+1)\frac{n-k}{(n-k+1)(k-1)} = 1$. This proves that $M^Ty \geq \1$, hence
\[
\|\+F\| \leq \sum_{i<n} y_i = (k-1)\frac{1}{k-1} + (n-k+1)\frac{n-k}{(n-k+1)(k-1)} = \frac{n-1}{k-1} = 1/p.
\]
Therefore, $\|\+F\| = 1/p$.
\end{proof}

\begin{cor}
There is a  (c.e.)\ $k/n$-base that is not a degenerate $k/n$-base.
\end{cor}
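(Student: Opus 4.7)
The plan is to combine \cref{prop:degenerate} with the characterisation of $k/n$-bases via cost function obedience (\cref{thm:more_on_k_n_bases}) and the remarks on strict containment $\+B_p\subsetneq \+B_q$ given after \cref{prop:calculus_quote}. First, I would compute that the power $p=\max\big\{\frac{k}{n+1},\frac{k-1}{n-1}\big\}$ arising in \cref{prop:degenerate} is strictly less than $k/n$. Indeed, $k/(n+1)<k/n$ is immediate, and $(k-1)/(n-1)<k/n$ follows from $n(k-1)=nk-n<nk-k=k(n-1)$ since $k<n$. Hence $p<k/n$.

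Next, I would invoke the dictionary provided by the two characterisation theorems. By \cref{thm:more_on_k_n_bases}, the class of $k/n$-bases coincides with $\+B_{k/n}$, the sets obeying $\cost_{\Omega,k/n}$. By \cref{prop:degenerate}, the class of degenerate $k/n$-bases coincides with $\+B_p$. Since $p<k/n$, the containment $\+B_p\subseteq \+B_{k/n}$ is a special case of the monotonicity of the hierarchy $\seq{\+B_q}$ noted after \cref{prop:calculus_quote}.

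The main point to invoke is the strictness of this containment, which the paper establishes in the paragraph following \cref{prop:calculus_quote}: whenever $p<q$ are rationals in $(0,1]$, $\limcost_{\Omega,p}$ is not bounded by a constant multiple of $\limcost_{\Omega,q}$, so by \cref{prop:calculus_quote} we have $\+B_p\subsetneq \+B_q$, and moreover the witness to this strict containment can be taken c.e.: starting from any $V\in \+B_{k/n}\setminus \+B_p$, the ideal $\+B_{k/n}$ is generated by its c.e.\ elements, so there is a c.e.\ set $A\ge_\Tur V$ with $A\in \+B_{k/n}$; since $\+B_p$ is downward closed under Turing reducibility (\cref{prop:obedience_of_our_cost_functions_is_downwards_closed}) and $V\notin \+B_p$, we must have $A\notin \+B_p$. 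Translating back through \cref{thm:more_on_k_n_bases} and \cref{prop:degenerate}, $A$ is a c.e.\ $k/n$-base that is not a degenerate $k/n$-base, as required.

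There is no real obstacle here; the entire argument is a bookkeeping exercise, and the only nontrivial ingredient is the inequality $p<k/n$, which reduces to two elementary comparisons of fractions. Everything else follows directly from results already proved in the paper.
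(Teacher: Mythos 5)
Your proposal is correct and follows essentially the same route as the paper: the paper also checks that $p=\max\big\{\tfrac{k}{n+1},\tfrac{k-1}{n-1}\big\}<k/n$, applies \cref{prop:calculus_quote} (together with the c.e.\ upgrade noted right after it) to get a c.e.\ set obeying $\cost_{\Omega,k/n}$ but not $\cost_{\Omega,p}$, and then concludes via \cref{prop:degenerate}. You have merely spelled out the fraction comparisons and the translation through \cref{thm:more_on_k_n_bases} in more detail.
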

\begin{proof}
It is easy to see that both $k/(n+1)$ and $(k-1)/(n-1)$ are less than $k/n$. Therefore, by \cref{prop:calculus_quote}, there is a c.e.\ set $A$ that obeys $\cc_{\Om,k/n}$ but not $\cc_{\Om,p}$ for $p=\max\big\{\frac{k}{n+1},\frac{k-1}{n-1}\big\}$. By Proposition~\ref{prop:degenerate}, $A$ is not a degenerate $k/n$-base.
\end{proof}

\subsection{\texorpdfstring{\boldmath $1/\omega$}{1/omega}-bases}

A $1/n$-base is computable from each of the $n$ coordinates of some Martin-L\"of random $(Z_1,\dots, Z_n)$. One can generalise this to infinite sequences. We now work in the computable probability space $(2^\w)^\w$. It is effectively isomorphic to~$2^\w$ via a measure-preserving map. Such a map is determined by a computable partition of~$\w$ into infinitely many computable sets (``columns''). Below we will use the fact that this can be done in such a way that the density of each column is positive. 

\begin{defn}
A set $A$ is a \emph{$1/\omega$-base} if there is a Martin-L\"of random sequence $(Z_1,Z_2,\dots)$ such that $(\forall i)\; A\leq_\Tur Z_i$.
\end{defn}

Such bases are now easy to characterise.

\begin{prop}\label{prop:1/omega}
 A set is a $1/\omega$-base iff it is a $p$-base for every rational $p>0$.
\end{prop}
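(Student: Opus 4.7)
The plan is to prove both implications; the forward direction is routine, while the backward direction is a direct adaptation of \cref{prop:covering_columns_of_Omega} from finite to countably infinite orthogonal decompositions of~$\Omega$.

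For the forward direction, suppose $(Z_1,Z_2,\dots)$ witnesses that $A$ is a $1/\omega$-base. Given a rational $p\in(0,1)$, I would pick $n$ with $1/n\le p$. The projection $(Z_1,\dots,Z_n)\in(2^\omega)^n$ is ML-random, since the pullback $U\times(2^\omega)^\omega$ of any ML-test~$U$ on $(2^\omega)^n$ has the same measure and is a ML-test on $(2^\omega)^\omega$. Each $Z_i$ computes $A$, so trivially any join of some $k$ of them does; hence $(Z_1,\dots,Z_n)$ witnesses $A$ as a $1/n$-base, and \cref{thm:more_on_k_n_bases} gives $A\in\+B_{1/n}\subseteq\+B_p$.

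For the backward direction, suppose $A\in\+B_p$ for every rational $p\in(0,1)$. Then $A\in\+B_{1/2}\subseteq\+B_1$, so $A$ is $K$-trivial and hence low for ML-randomness. I would fix a computable partition $\omega=\bigsqcup_{i\ge 1}C_i$ into infinite sets where each $C_i$ admits a positive rational lower density $p_i$, meaning $|C_i\cap[0,n)|\ge p_i n-O(1)$. A concrete choice is the dyadic partition $C_i=\{n:v_2(n+1)=i-1\}$ with $p_i=2^{-i}$. Define $Z_i\in 2^\omega$ by $Z_i(k)=\Omega(c^i_k)$, where $c^i_0<c^i_1<\cdots$ enumerate $C_i$. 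Then $\Omega\mapsto(Z_1,Z_2,\dots)$ is a computable measure-preserving isomorphism $2^\omega\to(2^\omega)^\omega$, so the tuple $(Z_i)_{i\ge 1}$ is ML-random.

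To finish, I would show each $Z_i$ computes $A$ by mimicking \cref{prop:covering_columns_of_Omega}. The nested family $G_\sigma=\{X_{C_i}:X\in 2^\omega,\ \sigma\prec X\}\subseteq 2^\omega$, where $X_{C_i}$ reads off the bits of $X$ at positions in $C_i$, is uniformly c.e.\ open and satisfies $\leb(G_\sigma)=2^{-|C_i\cap[0,|\sigma|)|}\le^\times 2^{-p_i|\sigma|}$. Therefore $(\seq{G_\sigma},\Omega)$ is a $p_i$-Oberwolfach test capturing $Z_i$, and by \cref{prop:covering_p-OW-tests_by_p-Auckland_tests}, $Z_i$ is captured by a $\cost_{\Omega,p_i}$-bounded test. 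The hypothesis $A\in\+B_{p_i}$ gives $A\models\cost_{\Omega,p_i}$, and lowness for random ensures $Z_i$ is $A$-random; so \cref{prop:weak_obedience_and_computing} yields $A\leT Z_i$. The only genuine choice, and the only minor obstacle, is arranging the $p_i$ to be rational so that the hypothesis applies; the dyadic partition accomplishes this.
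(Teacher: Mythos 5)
Your proposal is correct and follows essentially the same route as the paper: the forward direction via finite projections is identical, and your backward direction uses the very same dyadic decomposition of $\Omega$ into parts of positive density $2^{-i}$ (the paper's iterated even/odd split). The only cosmetic difference is that you re-derive the capture of each part by a $\cost_{\Omega,p_i}$-bounded test along the lines of \cref{prop:covering_columns_of_Omega} and then invoke \cref{prop:weak_obedience_and_computing}, whereas the paper simply cites \cref{thm:more_on_k_n_bases}, which packages exactly that machinery.
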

\begin{proof}
Assume that $A$ is a $1/\omega$-base witnessed by $(Z_1,Z_2,\dots)$. For each $n$, the sequence $(Z_1,\dots, Z_n)$ witnesses that $A$ is a $1/n$-base. This implies that $A$ is a $p$-base for every rational $p>0$

Now assume that $A$ is a $p$-base for every rational $p>0$. Consider breaking $\Omega$ up into countably many sequences $\{\bar \Omega_n\}_{n\in\omega}$ such that $\Omega = \Omega_1\oplus(\Omega_2\oplus(\Omega_3\oplus(\cdots)))$, where here~$\oplus$ is the usual split into evens and odds. In other words, $\bar \Omega_n$ is a $2^{-n}$-part of $\Omega$. For each $n$, we know that $A$ is a $2^{-n}$-base. Hence by \cref{thm:more_on_k_n_bases}, $A\leq_\Tur\bar \Omega_n$. Therefore, $A$ is a $1/\omega$-base as witnessed by the sequence $(\bar \Omega_1,\bar\Omega_2,\dots)$. 
\end{proof}

The proof shows that every $1/\omega$-base is witnessed by a single Martin-L\"of random sequence $(\bar \Omega_1,\bar \Omega_2,\dots)$ that arises from a computable partition of~$\Omega$. Again we remark that the proof used a partition of~$\omega$ into columns of positive density; if we use G\"odel's pairing function (as is commonly done), then each column has density 0 and the proof will not work. This distinction is only important when we consider the ways in which $\Omega$ can be considered as a universal witness for being a $1/\omega$-base; it does not affect the definition of being a $1/\omega$-base, in that a set~$A$ is a $1/\omega$-base if and only for some, or any, effective measure-preserving isomorphism $j\colon 2^\w\to (2^\w)^\w$ there is a random sequence $Z\in 2^\w$ such that $A$ is computable from each coordinate of $j(Z)$. 


\smallskip

As mentioned in the introduction, the notion of a $1/\omega$-base could theoretically be weakened, but we obtain an equivalent notion. The proof of the first direction of \cref{prop:1/omega} shows:

\begin{proposition} \label{prop:weak_1_omega}
	A set~$A$ is a $1/\omega$-base if and only if there is a countable infinite set $Q\subset 2^\w$ such that: (a) every $Z\in Q$ computes~$A$; and (b) the join of any finitely many elements of~$Q$ is random.
\end{proposition}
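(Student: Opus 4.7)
The plan is to derive both directions using the equality of $\+B_{>0}$ with the class of $1/\w$-bases given by \cref{prop:1/omega}, together with the characterisation of $1/n$-bases via obedience to $\cc_{\Om,1/n}$ from \cref{thm:more_on_k_n_bases}. The forward implication is essentially a matter of unpacking definitions; the reverse direction carries what little content there is.

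For the forward direction, suppose $A$ is a $1/\w$-base witnessed by $Z = (Z_1, Z_2, \dots)$, and set $Q = \{Z_n : n \ge 1\}$. Condition~(a) is built into the definition. For~(b), the join of any finite subcollection of the $Z_i$ corresponds to a projection of the random tuple $Z \in (2^\w)^\w$ onto finitely many coordinates (after a permutation of $\w$, which induces a computable measure-preserving automorphism of $(2^\w)^\w$), and is therefore random. The one small point is that $Q$ is genuinely infinite: for $i \ne j$ the diagonal $\{X \in (2^\w)^\w : X_i = X_j\}$ is an effectively closed null set, so distinct indices give distinct coordinates of any random point.

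For the reverse direction, suppose $Q = \{W_1, W_2, \dots\}$ is an infinite set satisfying (a) and (b). For each $n \ge 2$ I would use the tuple $(W_1, \dots, W_n)$: its join is random by~(b), equivalently the tuple is random in $(2^\w)^n$ under the canonical isomorphism, and every coordinate computes $A$ by~(a). Thus $(W_1, \dots, W_n)$ witnesses that $A$ is a $1/n$-base, and \cref{thm:more_on_k_n_bases} yields $A \in \+B_{1/n}$. Since $\+B_p \subseteq \+B_q$ whenever $p \le q$, and every rational $p \in (0,1)$ dominates some $1/n$, we conclude $A \in \+B_p$ for every such $p$; \cref{prop:1/omega} then delivers that $A$ is a $1/\w$-base. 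I do not expect any real obstacle here: the argument is essentially an assembly of \cref{thm:more_on_k_n_bases}, monotonicity of the ideals $\+B_p$, and \cref{prop:1/omega}, with the only mild technicality being the passage between ``the join of the $W_i$ is random'' and ``the tuple $(W_1,\dots,W_n)$ is random in $(2^\w)^n$'', which is immediate from the standard identification.
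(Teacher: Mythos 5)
Your proposal is correct and follows essentially the same route as the paper: the paper derives \cref{prop:weak_1_omega} by exactly the argument of the first direction of \cref{prop:1/omega} (each finite subtuple of $Q$ witnesses that $A$ is a $1/n$-base, hence $A$ is a $p$-base for all rational $p>0$, and \cref{prop:1/omega} converts this back into being a $1/\omega$-base), with the forward direction being the trivial unpacking you give. Your extra remarks—that distinct coordinates of a random point of $(2^\w)^\w$ are genuinely distinct, and that randomness of the join is the same as randomness of the tuple under the canonical identification—are correct and harmless additions to what the paper leaves implicit.
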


\subsubsection{$1/\omega$-bases and strong jump-traceability} 

Recall that a set $A$ is $\omega$-c.a.\ if it can be computably approximated with a computably bounded number of changes; equivalently, $A \le_{wtt} \emptyset'$. A set is \emph{strongly jump-traceable} (SJT) if it is~$h$-jump traceable for every order function~$h$; see \cite{GreenbergTuretsky:SJTsurvey} for a survey. Every strongly jump traceable set is a $1/\omega$-base. For, by \cite{Greenberg.Hirschfeldt.ea:12} and \cite{DGT:InherentEnumerabilityofSJT} together, any SJT set is computable from every $\w$-c.a.\ random sequence; the columns of~$\Omega$ are $\w$-c.a.

On the other hand, there is an $1/\omega$-base that is not SJT. To see this, let $\cost = \sum_n \tp{-n}\cost_{\Omega, \tp{-n}}$. Then any set obeying $\cost$ is an $1/\omega$-base, and $\cost$ is a benign cost function in the sense of \cite{GreenbergNies:benign}. Thus there is a computable order~$h$ such that every $h$-jump traceable obeys~$\cost$ (ibid.). But by work of Ng \cite{Ng:08}, $h$-jump traceability is strictly weaker than SJT.

We conjecture that the $1/\omega$ bases form a $\Pi^0_4$ complete ideal.

%


%

%
%
%
%
%
%

\section{Robust computability from random sequences} 
\label{sec:robust}
Recall that a set is \emph{robustly computable} from a sequence~$Z$ if it is computable from every~$Y$ such that the upper density of $Y\symdif Z$ is 0 (such a~$Y$ is called a ``coarse description'' of~$Z$). This notion was investigated in \cite{Hirschfeldt.Jockusch.ea:15}, where it is shown that every set that is robustly computable from a random sequence is $K$-trivial, and in fact, is a $(k-1)/k$-base for some~$k$.

In this section we provide the proof of the converse, Theorem~\ref{thm:Kuyper_class}, which states that 
the following are equivalent for a set~$A$:
	\begin{enumerate}
		\item $A\in \+{B}_{<1}$ (that is, $A$~is a $p$-base for some~$p<1$). 
		\item $A$ is robustly computable from some random sequence. 
		\item $A$ is robustly computable from~$\Omega$. 
		\item There is a $\delta>0$ such that~$A$ is computable from all sets~$Z$ such that the upper density of $Z\symdif \Omega$ is less than~$\delta$.
	\end{enumerate}

\begin{proof} (4)$\to$(3)$\to$(2) are trivial. As mentioned, the implication (2)$\to$(1) is in the proof of \cite[Thm.\ 3.2]{Hirschfeldt.Jockusch.ea:15}. 

It remains to show (1) $\to$ (4). Recall that for strings $\s,\tau$ of the same length~$n$, we let 
\[
	d(\s,\tau) = \frac{|\{ i \,:\, \sigma(i ) \neq \tau(i) \}|}{n}
\]
and that for $X,Y\in 2^\w$ we let $d(X,Y)=\limsup_n d(X\rest{n}, Y\rest{n})$. For all strings~$\s$ and all $q\in [0,1]$, we let $B(\s,q) = \left\{ \tau \,:\,  |\tau| = |\s|\andd d(\s,\tau)\le q \right\}$. A well-known estimate gives $|B(\s,q)| \le 2^{H(q)n}$ when $q\le 1/2$, where $H$ is binary entropy: $H(q) = - q \log_2 (q) - (1-q) \log_2(1-q)$ (see for example \cite[Cor.9,p.310]{MacWilliamsSloane}). Now choose $\delta$ small enough so that $H(2\delta) < 1-p$ (so if~$p$ is close to~1, then~$\delta$ will be small; if~$p$ is very small, then~$\delta$ can approach $1/4$). 

Let~$A$ be a $p$-base; take $Z\in 2^\w$ such that $d(\Omega,Z)<\delta$. We show that $Z$ computes~$A$. By our proof of \cref{thm:more_on_k_n_bases}, we may assume that~$A$ is c.e. 
For every string~$\tau$, let $G_\tau = \bigcup \left\{ [\rho] \,:\, \rho\in B(\tau,2\delta)  \right\}$. Then
\[
	\mu (G_\tau) = 2^{-|\tau|}|B(\tau,2\delta)| \le 2^{-|\tau|}2^{(1-p)|\tau|} = 2^{-p|\tau|}. 
\]

Fix $\s\prec Z$ such that for all $m\ge |\s|$, $d(Z\rest{m},\Omega\rest{m})\le \delta$. We define a functional~$\Gamma$ using an approximation $\seq{A_s}$ of~$A$ that witnesses that $A$ obeys $\cost_{\Omega,p}$. We also use an approximation of $\Omega$ such that $\Omega_{s+1}-\Omega_s \ge 2^{-s-1}$. We define~$\Gamma$ as follows: for every $n\ge |\s|$, for every string~$\tau$ of length~$n$ extending~$\s$, we set $\Gamma(\tau,n) = A_s(n)$ where~$s$ is the least stage $s> n$ such that for all $m\in [|\s|,n]$, $d(\tau\rest{m},\Omega_s\rest{m})\le \delta$ (note~$\delta$ and not $2\delta$). If there is no such stage~$s$, then $\Gamma(\tau,n)\diverge$. 

The assumption on~$Z$ implies that $\Gamma(Z,n)\converge$ for all $n\ge |\s|$. Let $s(n)$ be the stage at which the computation $\Gamma(Z\rest{n},n)$ is made. We need to show that for all but finitely many~$n\in A$, $n$ enters~$A$ by stage~$s(n)$. 

\smallskip

We enumerate open sets~$U_n$ for $n\ge |\s|$. If $n\notin A$ then $U_n = \emptyset$. If~$n\in A$, let~$t=t(n)$ be the stage at which~$n$ is enumerated into~$A$ (i.e., $n\in A_t\setminus A_{t-1}$). If $t(n)\le n$ then $U_n = \emptyset$. Suppose that $n<t(n)$. Resembling the proof of \cref{prop:covering_p-OW-tests_by_p-Auckland_tests}, let~$k = k_t$ be the unique~$k$ such that $2^{-k-1}\le \Omega_t-\Omega_n < 2^{-k}$. Note that by our choice of approximation to~$\Omega$, we have $k\le n$.  We let $U_n = \bigcup_{s=n}^{t(n)} G_{\Omega_s\rest{k}}$. Again, there are at most two values~$\rho$ for $\Omega_{s}\rest{k}$ for $s\in [n,t]$. Thus our calculation above shows that $\leb(U_n) \le 2 \cdot 2^{-pk}\le 2\cdot 2^p\cdot (\Omega_t-\Omega_n)^p$. Recall that the $\cost_{\Omega,p}$-cost of enumerating~$n$ into~$A$ is exactly $(\Omega_t-\Omega_n)^p$. Since~$A$ obeys~$\cost_{\Omega,p}$, we see that $\sum_n \leb(U_n)$ is finite, that is, $\seq{U_n}$ is a Solovay test. Thus, $\Omega\in U_n$ for only finitely many~$n$. 

Since $\Omega_s\to \Omega$, for all but finitely many~$n$, $\Omega-\Omega_n < 2^{-|\s|}$, which shows that for all but finitely many~$n\in A$, $k_{t(n)}\ge |\s|$. Let $n\ge|\s|$ and suppose that $\Gamma(Z,n) \ne A(n)$; so $n\in A$ and $s(n)< t(n)$. Suppose that $k_{t(n)}\ge |\s|$. We show that $\Omega\in U_n$. Note that $n<s(n)$ so $n<t(n)$. Let $k = k_{t(n)}$. Then $G_{\Omega_{s(n)}\rest{k}}\subseteq U_n$, and since $k\in [|\s|,n]$, we have $d(Z\rest{k},\Omega_{s(n)}\rest{k})\le \delta$. But by assumption on~$Z$, $d(Z\rest{k},\Omega\rest{k})\le \delta$. So $d(\Omega_{s(n)}\rest{k}, \Omega\rest{k})\le 2\delta$, i.e., $\Omega\rest{k}\in B(\Omega_{s(n)}\rest{k},2\delta)$, so $\Omega\in G_{\Omega_{s(n)}\rest{k}}$. 
\end{proof}

We remark that (2)$\to$(4) above is implied by \cite[Thm.\ 3.7]{Hirschfeldt.Jockusch.ea:15}, which is more general; the proof is more elaborate. 



\medskip

Theorem 3.19 of \cite{Hirschfeldt.Jockusch.ea:15} states that not every~$K$-trivial set is robustly computable from a random sequence. This fact can now be established using cost functions. It is not difficult to construct a cost function~$\cost$ such that $\limcost >^\times \limcost_\Omega$ but for all~$p<1$, $\limcost <^\times \limcost_{\Omega,p}$. By \cref{prop:calculus_quote} there is a set~$A$ obeying~$\cost_\Omega$ but not~$\cost$. That set is $K$-trivial but not a $p$-base for any~$p<1$, hence not robustly reducible to a random. We extend this result a little. Say that an ideal~$\+{I}\subseteq \Delta^0_2$ is characterised by a cost function~$\cost$ if $\+{I}$ is the collection of sets that obey~$\cost$. 

\begin{proposition} \label{prop:robust_ideal_no_cost}
	The ideal $\+{B}_{<1}$ is not characterised by a cost function. 
\end{proposition}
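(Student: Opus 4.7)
The plan is to argue by contradiction via cost function algebra. Assume some cost function $\dcost$ characterises $\+{B}_{<1}$. I will construct what I will call an \emph{admissible} cost function $\cost'$ (one with $\limcost'\le^\times \limcost_{\Omega,p}$ for every rational $p<1$) that satisfies $\limcost' \not\le^\times \dlimcost$. This is enough: \cref{prop:calculus_quote} then produces a set $A$ obeying $\dcost$ but not $\cost'$; the characterisation forces $A\in \+{B}_{<1}$, so $A$ obeys $\cost_{\Omega,p}$ for some rational $p<1$; admissibility of $\cost'$ together with \cref{prop:calculus_quote} then forces $A$ to obey $\cost'$ after all, the desired contradiction.

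First I extract the structural constraints on $\dcost$. For each rational $p<1$, $\+{B}_p\subseteq \+{B}_{<1}$ together with \cref{prop:calculus_quote} yields $\dlimcost\le^\times \limcost_{\Omega,p}$, so $\dcost$ itself is admissible. A short argument sharpens this to $\dlimcost(n)/(\Omega-\Omega_n)^p\to 0$ for every rational $p<1$: otherwise, picking $p_0$ with $\limsup_n \dlimcost(n)/(\Omega-\Omega_n)^{p_0}>0$ and any rational $q$ with $p_0<q<1$, the identity
\[
\dlimcost(n)/(\Omega-\Omega_n)^q = \bigl(\dlimcost(n)/(\Omega-\Omega_n)^{p_0}\bigr)\cdot (\Omega-\Omega_n)^{p_0-q}
\]
would be unbounded (as $(\Omega-\Omega_n)^{p_0-q}\to\infty$), contradicting admissibility of $\dcost$ at $q$.

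Next, set $L(n)=\log(1/(\Omega-\Omega_n))$ and $g(n)=\sup_{m\ge n}\dlimcost(m)L(m)$. Using admissibility of $\dcost$ at $q = (1+p)/2$, one computes
\[
\dlimcost(m) L(m)\le C_q (\Omega-\Omega_m)^{p}\cdot e^{-(1-p)L(m)/2} L(m),
\]
and the factor $e^{-(1-p)L(m)/2}L(m)$ tends to $0$. This shows $g(n)$ is finite, tends to $0$, and satisfies $g(n)\le^\times (\Omega-\Omega_n)^p$ for every rational $p<1$. Since $g$ is nonincreasing in $n$ with limit $0$, it is the limit function of a (monotonic, computable) cost function $\cost'$, which is therefore admissible. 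By construction $g(n)\ge \dlimcost(n)L(n)$, so $g(n)/\dlimcost(n)\ge L(n)\to\infty$ and $\limcost'\not\le^\times \dlimcost$; feeding $\cost'$ and $\dcost$ into \cref{prop:calculus_quote} as in the first paragraph completes the proof.

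The main obstacle is packaging $g$ into a genuine cost function: the naive product $\dcost(n,s)\log(1/(\Omega_s-\Omega_n))$ fails monotonicity in both $n$ and $s$, and $g$ itself is defined via the uncomputable limit $\dlimcost$. One resolves this by setting $g_s(n)=\sup_{n\le m\le s}\dcost(m,s)\log(1/(\Omega_s-\Omega_m))$ (which is computable and nonincreasing in $n$), then taking the running supremum $\cost'(n,s)=\max_{t\le s}g_t(n)$ for $s>n$ (and $0$ otherwise), yielding a valid cost function. The two inequalities actually used in the argument---admissibility of $\cost'$ and $\limcost'(n)\ge \dlimcost(n)L(n)$---depend only on limits, so they survive this approximation.
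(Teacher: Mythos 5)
Your overall skeleton is the same as the paper's: derive from the hypothetical characterisation that $\dlimcost\le^\times \limcost_{\Omega,p}$ for every rational $p<1$, then build a cost function that is $\le^\times$-below every $\limcost_{\Omega,p}$ but not $\le^\times \dlimcost$, and feed the pair into \cref{prop:calculus_quote} for a contradiction. That reduction is fine. The gap is in the construction of $\cost'$, i.e., exactly in the step that the paper handles with its squeeze lemma (\cref{lem:squeeze_lemma_for_cost_functions}). Your target limit $g(n)=\sup_{m\ge n}\dlimcost(m)L(m)$ is defined from two non-computable quantities, and $L(m)=\log\bigl(1/(\Omega-\Omega_m)\bigr)$ is only approximable \emph{from above} (since $\Omega_t-\Omega_m$ increases to $\Omega-\Omega_m$), while $\dlimcost(m)$ is approximable from below; their product is therefore not lower semicomputable in any evident way, and there is no reason it should be the limit of a computable monotone cost function at all. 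Your patch, $\cost'(n,s)=\max_{t\le s}\sup_{n\le m\le t}\dcost(m,t)\log\bigl(1/(\Omega_t-\Omega_m)\bigr)$, produces $\limcost'(n)=\sup_{m\ge n}\sup_{t>m}\dcost(m,t)\log\bigl(1/(\Omega_t-\Omega_m)\bigr)$, which is a supremum of \emph{stage-wise} products, not a function of the limits. The closing claim that both needed inequalities ``depend only on limits, so they survive this approximation'' is true for the lower bound $\limcost'(n)\ge\dlimcost(n)L(n)$ (a sup dominates the limit), but false for admissibility: an upper bound on $g$ does not transfer to $\limcost'$, because the sup can overshoot $g$ by an unbounded factor.

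Concretely, for $t$ just above $m$ the factor $\log\bigl(1/(\Omega_t-\Omega_m)\bigr)$ is governed by single increments of the chosen approximation of $\Omega$ and can be of order $m$ (increments near $2^{-m}$), whereas $L(m)=O(\log m)$ since $\Omega-\Omega_m\ge 2^{-K(m)-O(1)}$; and nothing you have derived prevents $\dcost(m,t)$ from already being close to $\dlimcost(m)$ at such early $t$ (your hypotheses constrain only the limit $\dlimcost$, not the speed at which $\dcost(m,\cdot)$ attains it). So the bound you computed for $g$ via admissibility at $q=(1+p)/2$, which crucially used that the logarithmic factor is $L(m)$ and that $x\mapsto xe^{-(1-p)x/2}$ is bounded, simply does not apply to $\limcost'$; the slack $(\Omega-\Omega_m)^{q-p}$ decays only polynomially in $m$ and cannot absorb a stage-wise logarithmic factor of size about $m$. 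Hence admissibility of $\cost'$ is unproven, and with it the whole contradiction. This is not a cosmetic issue: since no computable unbounded function of $n$ alone can multiply $\dcost$ while preserving admissibility (that would give a computable rate of convergence for $\Omega$), any fix must boost $\dcost$ only at carefully chosen, stabilising positions detected by stage-wise comparisons --- which is precisely what the paper's \cref{lem:squeeze_lemma_for_cost_functions} does with its markers $k_s(n)$, the least $k$ with $n\cdot\ecost(k,s)\le\dcost_n(k,s)$. You would need to prove a lemma of that kind (or import it) rather than rely on the limit function $g$.
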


In particular, $\+{B}_{<1}$ is not the ideal of $K$-trivial sets, as the latter is characterised by $\cost_\Omega$. \Cref{prop:robust_ideal_no_cost} gives the first example of a $\Sigma^0_3$ subideal of the $K$-trivial sets that is not characterised by any cost function.\footnote{Every $K$-trivial set is $\w$-c.a.\ and there is a uniform listing of all such sets (e.g., \cite[1.4.5]{Nies:book}). By a $\Sigma^0_3$-ideal we mean that the collection of sets in the ideal is a $\Sigma^0_3$ subset of Cantor space, or equivalently, that the collection of $\w$-c.a.\ indices of the elements of the ideal is $\Sigma^0_3$.}
%
The next lemma, which is key to the proof of \Cref{prop:robust_ideal_no_cost}, essentially says that there is no greatest lower bound for a strictly descending uniform sequence of cost functions.
\begin{lemma} \label{lem:squeeze_lemma_for_cost_functions}
	Let $\seq{\dcost_n}$ be a computable sequence of cost functions such that $(\forall n)\; \dcost_{n+1}\le \dcost_{n}$. Let $\ecost$ be a cost function such that $(\forall n)\; \elimcost \le^\times \dlimcost_n$ and $\dlimcost_n \not \le^\times \elimcost$.
	 Then there is a cost function $\cost\ge \ecost$ such that $(\forall n)\;\limcost\le^\times \dlimcost_n$ and $\limcost \not \le^\times \elimcost$. 
\end{lemma}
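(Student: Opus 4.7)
The plan is to build $\cost$ additively on top of $\ecost$ by superposing weighted contributions from the $\dcost_k$, each localized at a witness point $x_k$ at which $\dlimcost_k$ greatly exceeds $\elimcost$. For each $k$, using $\dlimcost_k \not\le^\times \elimcost$ together with the computable approximations, I would compute a witness $x_k$ satisfying $\dlimcost_k(x_k) \ge 4^k \elimcost(x_k)$ and $\dlimcost_k(x_k) > 0$, chosen moreover to lie in the common positivity region
\[
R \;=\; \{x\in\NN : \dlimcost_n(x) > 0 \text{ for every } n\}.
\]
Then set
\[
\cost(x,s) \;=\; \ecost(x,s) + \sum_{k\,:\, x \le x_k < s} 2^{-k}\,\dcost_k(x_k, s),
\]
which is a monotonic cost function with $\cost \ge \ecost$, vanishing for $x\ge s$, with limit $\limcost(x) = \elimcost(x) + \sum_{k\,:\, x_k \ge x} 2^{-k}\dlimcost_k(x_k) \le \elimcost(0) + 2\dlimcost_0(0)$ that tends to $0$ as $x\to\infty$.

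For the failure $\limcost \not\le^\times \elimcost$, at each $x = x_k$ the index-$k$ term alone gives $\limcost(x_k) \ge 2^{-k}\dlimcost_k(x_k) \ge 2^k\,\elimcost(x_k)$, so the ratio $\limcost(x_k)/\elimcost(x_k)$ is unbounded in $k$ (and in the degenerate case $\elimcost(x_k) = 0$, it is witnessed directly by $\limcost(x_k) > 0$). For the inequality $\limcost \le^\times \dlimcost_n$, fix $n$ and split the sum at $k=n$. The tail $\sum_{k\ge n,\, x_k\ge x} 2^{-k}\dlimcost_k(x_k)$ is at most $2^{1-n}\dlimcost_n(x)$, using $\dlimcost_k \le \dlimcost_n$ for $k\ge n$ and the monotonicity $\dlimcost_n(x_k) \le \dlimcost_n(x)$ (since $x_k\ge x$). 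The head $\sum_{k<n,\, x_k\ge x} 2^{-k}\dlimcost_k(x_k)$ vanishes outside the finite set $X_n = \{x : x\le \max_{k<n} x_k\}$ and is bounded on $X_n$ by the constant $\sum_{k<n} 2^{-k}\dlimcost_k(x_k)$; crucially, since every $x_k\in R$, the function $\dlimcost_n$ is positive throughout $X_n$, so this bound is at most a constant multiple of $\dlimcost_n(x)$ on $X_n$. Combining with $\elimcost \le^\times \dlimcost_n$ yields $\limcost(x) \le C_n \dlimcost_n(x)$ for all $x$.

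The main obstacle is the computable selection of $x_k\in R$, since $R$ is a priori only $\Pi^0_2$. One verifies $R$ is nonempty (indeed $0\in R$, as $\dlimcost_n(0) = 0$ would force $\dlimcost_n\equiv 0$, contradicting $\dlimcost_n\not\le^\times \elimcost$), and then splits cases. If $R = \NN$, a dovetailed search over candidates and stages produces strictly increasing $x_k\in R$ with the boost property, using that the failure of $\dlimcost_k\le^\times \elimcost$ persists at arbitrarily large $x$ (otherwise the ratio $\dlimcost_k/\elimcost$ would be globally bounded after absorbing finitely many exceptions). If instead $R$ is a bounded initial segment $[0, Y_\infty)$ with $Y_\infty<\infty$, then for each sufficiently large $k$ we must have $Y_k = Y_\infty$, so $\dlimcost_k$ is supported in $R$, and the failure $\dlimcost_k\not\le^\times\elimcost$ on this common support forces $\elimcost$ to vanish at some $x^*\in R$ (else the ratio would be uniformly bounded on the finite set $R$ and identically zero off $R$); taking $x_k = x^*$ for every $k$ makes the boost condition automatic, keeps $\limcost(x^*) \le 2\dlimcost_0(0)$ finite, and ensures $\limcost(x^*) > 0 = \elimcost(x^*)$, while the verification goes through with $X_n = \{0,1,\ldots,x^*\}$.
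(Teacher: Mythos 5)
Your verification is fine \emph{granted} the witnesses: the splitting of the sum at $k=n$, the use of the common positivity region to handle the finitely many small arguments, and the bounded-$R$ case (where a single hard-wired $x^*$, chosen non-uniformly, does give a computable $\cost$) all work. The genuine gap is the step you gloss as ``a dovetailed search over candidates and stages produces strictly increasing $x_k\in R$ with the boost property.'' The boost condition $\dlimcost_k(x)\ge 4^k\,\elimcost(x)$ is a condition on the \emph{limits}, and both sides are only approximable from below; at no finite stage can a search certify it, since confirming $\dcost_k(x,s)\ge 4^k\ecost(x,t)$ says nothing about later growth of $\ecost(x,\cdot)$. It is a $\Pi^0_2$ property of $x$, and the hypothesis $\dlimcost_k\not\le^\times\elimcost$ gives only existence of witnesses, not computable findability. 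This is not a presentational issue: your $\cost$ is computable only if $k\mapsto x_k$ is computable, and there are instances of the hypotheses in which no computable witness sequence exists. For example, take $\dcost_n(x,s)=2^{-x}$ for $x<s$ (all $n$), and $\ecost(x,s)=2^{-x-f_s(x)}$ for $x<s$, where $f_s(x)=\min_{x\le y\le s}C_s(y)$ is the standard approximation from above to $f(x)=\min_{y\ge x}C(y)$ ($C$ plain Kolmogorov complexity); one checks these are cost functions, $\elimcost\le\dlimcost_n$, $\dlimcost_n/\elimcost=2^{f(x)}$ is unbounded, and $R=\NN$. Here $\dlimcost_k(x_k)\ge 4^k\elimcost(x_k)$ says exactly $f(x_k)\ge 2k$, i.e.\ $x_k$ exceeds $\max\{y: C(y)<2k\}$; a computable such sequence would dominate this busy-beaver-type function, which is impossible. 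So in your main case the required selection simply cannot be made.

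This is precisely the difficulty the paper's proof is built to avoid: it never commits to witnesses. At stage $s$ it uses the moving marker $k_s(n)$, the \emph{least} $k$ with $n\cdot\ecost(k,s)\le\dcost_n(k,s)$, and boosts $\cost$ at the current marker by taking a maximum with $n\cdot\ecost(x,s)$ --- a quantity automatically capped by $\dcost_n(x,s)$ at the moment it is applied. Correctness or convergence of the markers is never needed; the hypothesis only guarantees that for each $n$ the set $\{k_s(n):s\in\NN\}$ is bounded, hence some value recurs infinitely often, which yields $\limcost(k)\ge n\,\elimcost(k)$ there, while transient wrong guesses are harmless for $\limcost\le^\times\dlimcost_n$ because the boost is a capped maximum rather than an added contribution pinned to a fixed location (as in your superposition). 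If you want to keep an additive scheme, the witness locations would have to become stage-dependent in a similar way, and the upper-bound estimate redone for moving locations --- at which point you have essentially reconstructed the paper's argument. (As an aside, your bounded-$R$ analysis does treat carefully a degenerate situation, $\elimcost$ with finite support, that the paper's own write-up passes over; but that does not repair the main case.)
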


\begin{proof}
	We define $\cost(x,s)$ by induction on~$s$, starting with $\cost(x,s)=0$ for all $x\ge s$. At stage~$s$ we let, for each~$n<s$, $k_s(n)$ be the least~$k$ such that $n\cdot \ecost(k,s) \le \dcost_n(k,s)$ (for all $s$ and~$n$, $k_s(n)\le s$ by one of our assumptions on cost functions). We then define, for $x<s$, 
	\[
		\cost(x,s) = \begin{cases*}
			\max \{\cost(x,s-1), n\cdot \ecost(x,s)\} & if $n$ is greatest such that $x = k_s(n)$; \\
			\max \{ \cost(x,s-1), \ecost(x,s)\} & if $x\ne k_s(n)$ for all $n<s$. 
		\end{cases*}
	\]
	The point is that for each~$n$, the set $\{ k_s(n)\,:\, s<\omega \}$ is bounded:
	since $\dlimcost_n \nle^\times \elimcost$ there is some~$k$ such that for almost all~$s$, $n\cdot \ecost(k,s) < \dcost_n(k,s)$. If $k = k_s(n)$ for infinitely many~$s$ then $n\cdot \elimcost(k) \le \limcost(k)$, so $\limcost \nle^\times \elimcost$. But this also shows that for all~$n$, for almost all~$k$, $\limcost(k)\le \max\{\elimcost(k), \dlimcost_n(k)\}$, so $\limcost \le^\times \dlimcost_n$ follows from $\elimcost\le^\times \dlimcost_n$. 
\end{proof}


\begin{proof}[Proof of \cref{prop:robust_ideal_no_cost}]
	Suppose, for a contradiction, that $\+{B}_{<1}$	is characterised by the cost function $\ecost$. \Cref{prop:calculus_quote} implies that for all~$p$, $\elimcost\le^\times \limcost_{\Omega,p}$. Apply \cref{lem:squeeze_lemma_for_cost_functions} for~$\ecost$ and $\dcost_n = \cost_{\Omega,(n-1)/n}$ to obtain a cost function $\cost$. Then $\limcost \le^\times \dlimcost_n$ for each $n$ implies that~$\cost$ characterises a class containing $\+{B}_{<1}$. However $\limcost\nle^\times \elimcost$, so by \cref{prop:calculus_quote} again, there is a set obeying $\elimcost$ but not $\limcost$, which is a contradiction. 
\end{proof}



\section{Being computable from all weakly LR-hard randoms} \label{sec:aed}

This section provides further evidence that the ideal $\+{B}_{<1}$ is, in a sense, much smaller than the ideal of $K$-trivials. We show that it is properly contained in the ideal of degrees which lie below every so-called weakly LR-hard sequence; the latter ideal itself is properly contained in the $K$-trivial degrees.

As mentioned earlier, a set~$X$ is \emph{LR-hard} if every~$X$-random sequence is 2-random. 

\begin{proposition} \label{prop:LR-hard-box_implies_K_trivial}
	If a set~$A$ is computable from all LR-hard random sequences, then it is $K$-trivial. 
\end{proposition}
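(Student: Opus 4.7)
The plan is to exploit the classical characterisation of $K$-triviality as being a base for randomness: by Hirschfeldt, Nies, and Stephan~\cite{HirschfeldtNiesStephan:UsingRandomSetsAsOracles}, a set $A$ is $K$-trivial if and only if some sequence $Z$ satisfies $A\le_\Tur Z$ and $Z$ is ML-random relative to~$A$. It therefore suffices to exhibit a single sequence that is simultaneously LR-hard, so the hypothesis of the proposition forces $A\le_\Tur Z$, and $A$-random, so the base-for-randomness characterisation is triggered.

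The natural candidate is $\Omega^A$, Chaitin's halting probability relativised to~$A$. Two standard relativisations supply what is needed. First, $\Omega^A$ is ML-random relative to~$A$ (hence ML-random in the unrelativised sense). Second, $\Omega^A\ge_\Tur A'$, and in particular $\emptyset'\le_\Tur \Omega^A$. From the latter, LR-hardness of $\Omega^A$ is immediate: any $\emptyset'$-ML-test is uniformly enumerable from $\Omega^A$, so it is also an $\Omega^A$-ML-test, and thus every $\Omega^A$-ML-random sequence is $\emptyset'$-ML-random, i.e., $2$-random.

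Having identified $\Omega^A$ as an LR-hard ML-random sequence, the hypothesis of the proposition directly gives $A\le_\Tur \Omega^A$. Combined with the fact that $\Omega^A$ is $A$-ML-random, this is precisely the base-for-randomness condition, so the HNS characterisation delivers that~$A$ is $K$-trivial. I do not anticipate any serious obstacle; the only external inputs are the two standard relativisations of Chaitin's theorem (that $\Omega^A$ is $A$-random and computes $A'$) together with the HNS characterisation, all of which are available off the shelf.
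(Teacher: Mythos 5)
Your argument breaks at the second ``standard relativisation'': it is not true in general that $\Omega^A \ge_\Tur A'$. The correct relativisation of Chaitin's theorem is $\Omega^A \oplus A \equiv_T A'$; without the oracle $A$ one cannot enumerate the domain of the universal prefix-free machine with oracle $A$, and $\Omega^A$ by itself need not compute $A'$, nor even $\emptyset'$. A quick sanity check shows something must be wrong: since $A \le_\Tur A'$ for every $A$, your claim would give $A \le_\Tur \Omega^A$ unconditionally, and as $\Omega^A$ is always ML-random relative to $A$, every set would be a base for ML-randomness --- contradicting the very Hirschfeldt--Nies--Stephan theorem \cite{HirschfeldtNiesStephan:UsingRandomSetsAsOracles} you invoke. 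For a concrete counterexample take $A=\emptyset'$: then $\Omega^{\emptyset'}$ is $2$-random, and a $2$-random computes no noncomputable $\Delta^0_2$ set, so $\Omega^{\emptyset'} \not\ge_\Tur \emptyset'$, let alone $\emptyset''$. The LR-hardness of $\Omega^A$, which is what your argument actually needs, fails as well: no $2$-random $Z$ is LR-hard, because by van Lambalgen's theorem $\Omega$ is ML-random relative to $Z$, yet $\Omega$ is not $2$-random.

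Nor can the strategy be repaired simply by choosing a cleverer witness. What you need is a single sequence that is simultaneously LR-hard and ML-random relative to $A$. For $A \ge_\Tur \emptyset'$ no such sequence exists at all (every $A$-random is then $2$-random, hence not LR-hard), and for a general $A$ the existence of such a witness is essentially the content of the proposition: if $A$ is $K$-trivial, then $A$ is low for ML-randomness and every LR-hard random serves, but for an $A$ not yet known to be $K$-trivial you have no way to produce one, so the reduction is circular. This is exactly why the paper argues contrapositively with heavier tools: if $A$ is not $K$-trivial, Day and Miller \cite{DM:15} supply a random $X$ that is not a density-one point and does not compute $A$, and by \cite{BienvenuEtAl:DenjoyDemuthDensity} such an $X$ is LR-hard, so $A$ fails the hypothesis. (For c.e.\ $A$ a witness-based argument in your spirit does work, using an incomplete LR-hard random obtained by pseudo-jump inversion together with the Hirschfeldt--Nies--Stephan result on c.e.\ sets below incomplete randoms; but that route uses the c.e.-ness of $A$, and $\Omega^A$ is never the right witness.)
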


\begin{proof}
Day and Miller \cite{DM:15} showed that if~$A$ is not $K$-trivial, then there is a random~$X$ that is not a density 1 point and yet does not compute~$A$. Such a random must be LR-hard~\cite{BienvenuEtAl:DenjoyDemuthDensity}.	
\end{proof}

It is still unknown whether every $K$-trivial is computable from every LR-hard random sequence. We say that $X$ is  \emph{weakly LR-hard} if every~$X$-random sequence is Schnorr random relative to $\Halt$.

\begin{proposition}
There is a $K$-trivial set that is not computable from all weakly LR-hard randoms. 
\end{proposition}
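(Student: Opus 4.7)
The plan is to reduce the claim to the existence of a random sequence~$X$ that is weakly LR-hard but not LR-hard. Given such an~$X$, the paragraph just before the proposition statement does the rest: since~$X$ is random and not LR-hard, by~\cite{BGKNT:16} (as recalled in the introduction) $X$ is Oberwolfach random, and so~$X$ fails to compute the ``smart'' $K$-trivial sets. Any smart $K$-trivial~$A$ thus satisfies $A\not\le_\Tur X$ while being $K$-trivial, witnessing the proposition.

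So the heart of the matter is to produce a random~$X$ which is weakly LR-hard but not LR-hard. The reason such an~$X$ should exist is the genuine gap between Schnorr randomness and Martin-L\"of randomness relative to~$\emptyset'$: there exist sequences that are $\emptyset'$-Schnorr random but not $2$-random, because a $\emptyset'$-Schnorr test carries the extra requirement that its measure be $\emptyset'$-computable, whereas a $\emptyset'$-ML test need only have measure bounded by $2^{-n}$. Weak LR-hardness of~$X$ asks that every $X$-random be $\emptyset'$-Schnorr random, while LR-hardness asks that every $X$-random be $2$-random, so the above gap is exactly what must be exploited.

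Concretely I would build~$X$ by forcing with $\Pi^0_1$ classes of positive measure inside the class of Martin-L\"of randoms. First, I would force~$X$ to be a density-one point; by~\cite{BienvenuEtAl:DenjoyDemuthDensity} this already guarantees that~$X$ is not LR-hard. Simultaneously, the construction must ensure that every $X$-random passes every $\emptyset'$-Schnorr test, i.e., that~$X$ is weakly LR-hard. I would enumerate a list of (candidates for) $\emptyset'$-Schnorr tests along with their $\emptyset'$-computable measure approximations, and at each stage restrict the $\Pi^0_1$ class of candidate~$X$'s to exclude those $X$ whose product measure with the test's cover is appreciable on the ``failing'' continuations.

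The main obstacle is this last ingredient: unlike the Martin-L\"of case, where one obtains such $X$ via a Fubini argument against a single universal test, here we must work with Schnorr tests whose measures are only $\emptyset'$-computable. I would handle this by a Borel--Cantelli style computation: for each $\emptyset'$-Schnorr test, the set of~$X$ possessing an $X$-random that fails it forms a~$\emptyset'$-null class whose measure bound can be effectively approximated using~$\emptyset'$. Summing a geometric series of such bounds over the enumeration, one obtains a null set of ``bad''~$X$, and avoiding this null set is compatible with remaining inside the density-one randoms of positive measure — yielding the desired~$X$.
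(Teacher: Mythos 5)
Your first paragraph is fine as a reduction: granted a random $X$ that is weakly LR-hard but not LR-hard, the known implication ``random and not LR-hard $\Rightarrow$ Oberwolfach random $\Rightarrow$ does not compute the smart $K$-trivial sets'' yields the proposition, and this is essentially how the paper also concludes (there via ``any random above all $K$-trivials is LR-hard, hence high''). The problems are in your construction of the witness $X$. First, the step ``force $X$ to be a density-one point; by \cite{BienvenuEtAl:DenjoyDemuthDensity} this already guarantees that $X$ is not LR-hard'' uses the converse of what is proved there: the cited result says that a random which is \emph{not} a density-one point is almost everywhere dominating, hence LR-hard. The implication you need (every density-one random is non-LR-hard, equivalently every LR-hard random fails density one) is not available; indeed, if it held, every LR-hard random would compute all $K$-trivial sets (randoms that are not density-one points are not Oberwolfach random and so compute every $K$-trivial), settling a question this paper explicitly states is open. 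So density one does not certify non-LR-hardness.

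Second, and more seriously, the measure-theoretic strategy for securing weak LR-hardness points the wrong way. The class of weakly LR-hard sequences is null: if $X$ is $2$-random then, since $\Omega\equiv_T\emptyset'$, van Lambalgen's theorem gives that $\Omega$ is ML-random relative to $X$, while $\Omega\le_T\emptyset'$ is certainly not Schnorr random relative to $\emptyset'$; hence no $2$-random is weakly LR-hard. Concretely, for the $\emptyset'$-Schnorr test $\seq{[\Omega\rest{n}]}$ capturing $\Omega$, the set of $X$ possessing an $X$-random that fails this test contains every $2$-random and so has measure one, not measure zero --- the Borel--Cantelli bound you propose is false, and forcing with positive-measure $\Pi^0_1$ classes while avoiding ``bad'' null sets cannot land you inside the null class of weakly LR-hard sequences. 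A different mechanism is needed: the paper obtains the witness from the characterisation of Barmpalias, Miller and Nies \cite{Barmpalias.Miller.ea:12} ($X$ is weakly LR-hard iff $\emptyset'$ is c.e.\ traceable by $X$) together with pseudojump inversion for ML-random sets applied to a properly low$_2$, array computable (equivalently, c.e.\ traceable) c.e.\ set; this produces a weakly LR-hard $\Delta^0_2$ random that is not high, and since any random computing all $K$-trivials is LR-hard and hence high, that random omits some $K$-trivial set.
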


\begin{proof}
 Barmpalias, Miller, and Nies~\cite{Barmpalias.Miller.ea:12} have shown that $X$ is weakly LR-hard if and only if $\emptyset'$ is c.e.\ traceable by $X$: there is a computable bound $h$ such that each function  $f \leT \emptyset'$ has an $h$-bounded trace c.e.\ in $X$.    A c.e.\ set is array computable if and only if it  is   c.e.\ traceable, and it is known that such a set can be properly low$_2$. Hence, by pseudojump inversion for ML-random sets, there is a weakly LR-hard ML-random $\DII$ set $X$ that is properly high$_2$. 
 
 Every random set Turing above every  $K$-trivial  is not Oberwolfach random in the sense of \cite{BGKNT:16}.  Hence it is LR-hard, and in particular high. So not every  $K$-trivial is   computable from all weakly LR-hard randoms.
\end{proof}

For background, there are several results characterising sub-ideals of the $K$-trivials as those degrees computable from all random elements of some null $\Sigma^0_3$ class. One example is the class of strongly jump-traceable sets;
they are precisely the sets computable from all superhigh random sequences \cite{Greenberg.Hirschfeldt.ea:12,GreenbergTuretsky:SJTsurvey}. 
 \Cref{thm:more_on_k_n_bases} implies that the ideal of $k/n$-bases is such a class: the collection of sets computable from the $n$-columns of $\Omega$. This notion is closely related to that of a \emph{diamond class}: the class of \emph{c.e.}\ sets computable from all random sequences is some null~$\Sigma^0_3$ class. This restriction to c.e.\ sets is often immaterial, since the ideals under discussion are generated by their c.e.\ elements. However, at times we need to work harder to show one implication for general sets. For example, proving \cref{prop:LR-hard-box_implies_K_trivial} for c.e.\ sets~$A$ does not require the work of Day and Miller; we can use the existence of an incomplete LR-hard random, which follows from pseudo-jump inversion for randoms.

We also remark that the ideal of sets computable from every JT-hard random (studied implicitly in \cite{GreenbergNies:benign}  and in more detail in \cite[Section 8.5]{Nies:book}) contains the ideal of sets below every weakly LR-hard random; the former though is not yet known to be properly contained in the $K$-trivials.

\begin{remark}\label{rem:Kjos adapt} 
	Recall  that $X$ is   LK-hard if $(\fa y)\; K^X(y) \lep K^{\emptyset'}(y)$.   A computable measure machine   is a prefix free machine $M$ such that $\leb \Opcl  {\dom M}$ (the measure of its domain) is a computable real \cite[3.5.14]{Nies:book}.   We say that $X$ is \emph{weakly LK-hard} if $(\fa y)\; K^X(y) \lep K_{M}(y)$ for each computable measure machine $M$ relative to $\emptyset'$. 

Kjos-Hanssen et al.\ \cite{Kjos.Miller.ea:11} have proved  that  every LR-hard  set    is LK-hard. An adaptation of their argument,  available in \cite[Section 2]{LogicBlog:15},  shows that every weakly LR-hard  set    is weakly LK-hard. 
\end{remark}

\begin{thm}\ 
\begin{enumerate}
  	\item Every set in $\+{B}_{<1}$ is computable from all weakly LR-hard random sets.
  	\item Some set not in~$\+{B}_{<1}$ is computable from all weakly LR-hard random sets.
  \end{enumerate} 
\end{thm}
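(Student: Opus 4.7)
For part (1), let $A\in\+B_{<1}$; by \cref{thm:more_on_k_n_bases} $A$ obeys $\cost_{\Omega,p}$ for some rational $p<1$, and $A$ is $K$-trivial, hence low for ML-randomness. Let $X$ be a weakly LR-hard random. The plan is to exhibit a $\cost_{\Omega,p}$-bounded test capturing $X$, whence \cref{prop:weak_obedience_and_computing} (noting that $X$ is $A$-random) will give $A\le_\Tur X$. By \cref{prop:covering_p-OW-tests_by_p-Auckland_tests} it suffices instead to produce a $p$-Oberwolfach test capturing $X$.

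To build that test I plan to use the Barmpalias--Miller--Nies characterisation of weak LR-hardness: there is a computable $h$ such that $\emptyset'$ is $X$-c.e.~traceable with traces of size at most $h$. Applied to the $\emptyset'$-computable modulus $m(k)=\min\{s:\Omega-\Omega_s\le 2^{-k}\}$, this yields an $X$-c.e.\ set $T_k$ of size at most $h(k)$ with $m(k)\in T_k$. I will take $\alpha=\Omega$, and for each $\sigma\in 2^{<\w}$ define $G_\sigma$ to be the c.e.~open set obtained by enumerating, over all $h$-bounded trace-candidates, the basic clopen neighbourhoods of candidate stages consistent with $\sigma\prec\Omega$. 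Since $h$ is computable, this enumeration is genuinely c.e., and because $X$'s own trace realises one of the candidates, $X\in G_{\Omega\rest k}$ for each $k$. A counting estimate bounds $\leb(G_\sigma)$ by $h(|\sigma|)\cdot 2^{-|\sigma|}$, which is $\le^\times 2^{-p|\sigma|}$ for large $|\sigma|$ using $1-p>0$ to absorb $h$.

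For part (2), the plan is cost-function diagonalisation. I will apply \cref{lem:squeeze_lemma_for_cost_functions} with $\dcost_k=\cost_{\Omega,(k-1)/k}$ and $\ecost=\cost_\Omega$. The hypotheses hold: the $\dcost_k$ are pointwise decreasing, $\elimcost\le\dlimcost_k$ pointwise, and $\dlimcost_k\not\le^\times \elimcost$ since $(\Omega-\Omega_n)^{-1/k}\to\infty$. The lemma then yields a cost function $\cost$ with $\cost\ge\cost_\Omega$, $\limcost\le^\times \limcost_{\Omega,(k-1)/k}$ for every $k$, and $\limcost\not\le^\times\limcost_\Omega$. By \cref{prop:calculus_quote}, every set in $\+B_{<1}$ obeys $\cost$; so $\+B_{<1}$ is contained in the class $\+C$ of sets obeying $\cost$. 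Since by \cref{prop:robust_ideal_no_cost} the ideal $\+B_{<1}$ is not characterised by any cost function, $\+C\supsetneq\+B_{<1}$, and I fix any $A\in\+C\setminus\+B_{<1}$, which is $K$-trivial because $\cost\ge\cost_\Omega$.

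The last task is to show that the chosen $A$ is computable from every weakly LR-hard random $X$. The plan is to assemble the tests from part (1) levelwise into a single $\cost$-bounded test: choose rationals $p_n\to 1$ with $(\Omega-\Omega_n)^{p_n}\le^\times \limcost(n)$, and let the level-$n$ component be the level-$n$ component of the $\cost_{\Omega,p_n}$-bounded test from part (1) capturing $X$. Then \cref{prop:weak_obedience_and_computing}, applied with $\cost$, will deliver $A\le_\Tur X$. The hard part will be that this assembly requires the multiplicative constants in part (1) to be uniformly bounded across $p\in(0,1)$; making this uniformity precise, and managing the fact that the trace bound $h$ absorbs differently for different exponents $p$, is the delicate step of the proof.
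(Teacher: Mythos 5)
Your plan for part (1) has the right target --- capture the weakly LR-hard random $X$ by a $p$-Oberwolfach test, pass to a $\cost_{\Omega,p}$-bounded test via \cref{prop:covering_p-OW-tests_by_p-Auckland_tests}, and finish with \cref{prop:weak_obedience_and_computing} --- but the test you sketch does not exist as described, and that is the heart of the matter. The trace $(T_k)$ of the modulus $m(k)$ is c.e.\ \emph{in $X$}, whereas the components $G_\sigma$ of a $p$-OW test must be uniformly c.e.\ with no oracle; your remedy of enumerating ``over all $h$-bounded trace-candidates'' destroys every measure bound, since every stage belongs to some $h$-bounded candidate set. Under the reading that makes your counting estimate $\leb(G_\sigma)\le h(|\sigma|)2^{-|\sigma|}$ true --- namely $G_\sigma=\bigcup_s[\Omega_s\rest{|\sigma|}]$ over the candidate stages $s$ --- the test captures $\Omega$ (and only relative to $X$), not $X$: there is no reason that $X\in[\Omega_s\rest{k}]$, so ``$X\in G_{\Omega\rest{k}}$'' fails. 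Under the other natural reading, $G_\sigma=\{Y: \text{some } s \text{ among the first } h(k) \text{ trace outputs of } Y \text{ has } \Omega_s \text{ consistent with } \sigma\}$, the bound $\leb(G_\sigma)\le^\times 2^{-p|\sigma|}$ has no justification: nothing limits the measure of the set of oracles whose trace happens to output a late stage, and the bound is needed for \emph{all} $\sigma$, in particular strings to the left of $\Omega$, which the covering in \cref{prop:covering_p-OW-tests_by_p-Auckland_tests} actually uses. (Also, absorbing $h$ into $2^{(1-p)k}$ presumes $h$ is slow-growing, which the Barmpalias--Miller--Nies bound does not give for free.) The paper's proof of (1) runs through a different device: weak LR-hardness implies weak LK-hardness (the adaptation of the Kjos-Hanssen--Miller--Solomon argument recorded just before the theorem), one builds a discrete measure $\nu$ of computable total weight $\sum_n 2^{(p-1)n}$ timed by the integral $I_s$ of the universal oracle discrete measure $\alpha$, and the coding constant $d$ with $d\,\alpha^X\ge\nu$ turns the oracles satisfying this inequality into the versions of a $p$-OW test with level-$n$ measure $d\,2^{-pn}$ capturing $X$.

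For part (2), your selection of $A$ is fine (\cref{lem:squeeze_lemma_for_cost_functions} with $\ecost=\cost_\Omega$, then \cref{prop:calculus_quote} and \cref{prop:robust_ideal_no_cost} give a set obeying $\cost$ outside $\+B_{<1}$, modulo starting the sequence at $\dcost_2$ so all $\dcost_k$ satisfy the limit condition), but the final step is not a ``delicate uniformity issue''--- it is where the plan breaks. The squeeze lemma guarantees only $\limcost\ge\limcost_\Omega$, and in its construction $\limcost(x)=\Omega-\Omega_x$ at every $x$ that is never a special point $k_s(n)$; nothing rules out infinitely many such levels. At any level $n$ with $\limcost(n)\le^\times\Omega-\Omega_n$, your constraint $(\Omega-\Omega_n)^{p_n}\le^\times\limcost(n)$ forces $1-p_n=O(1/\log(1/(\Omega-\Omega_n)))$, the weights $\sum_m 2^{(p_m-1)m}$ and coding constants of the part-(1) construction blow up, and there is no single machine or constant to assemble. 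Worse: along such levels a nested $\cost$-bounded test is a $\cost_\Omega$-bounded test, so if your assembly captured an arbitrary weakly LR-hard random, \cref{prop:weak_obedience_and_computing} would make every $K$-trivial set computable from every weakly LR-hard random, contradicting the proposition at the start of this section. The paper avoids this by fixing the intermediate cost function in advance with the right shape: $\cost_{\Omega,h}$ with $h(x)=x(\log x)^2$, so that the single weight sequence $\gamma_n=2^{-n}/h(2^{-n})$ is summable and one run of the part-(1) construction yields an $h$-OW test, hence a $\cost_{\Omega,h}$-bounded test, capturing every weakly LR-hard random, while $h(x)\le^\times x^p$ for all $p<1$; \cref{prop:robust_ideal_no_cost} then supplies a set obeying $\cost_{\Omega,h}$ that is not in $\+B_{<1}$.
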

\begin{proof} (1) 	
Fix $p<1$. We show that every $p$-base is computable from all weakly LR-hard random sets. Let $Z$ be weakly LR-hard.

We will build a discrete measure~$\nu$ such that $\nu(m)$ is $\DII$ uniformly in $m$ and $\sum_m \nu(m)$ is a computable real. Let $\alpha$ be the universal uniform left-c.e.\ discrete measure, namely, $\alpha^Z(n)$ is the chance that the standard universal prefix-free machine with oracle~$Z$ prints out $n$.  Since $Z$ is weakly LK-hard, by Remark~\ref{rem:Kjos adapt}, $\alpha^Z \ge^\times   \nu$; this uses  a slight  adaptation of the Coding Theorem  (e.g., \cite[Thm.\ 2.2.25]{Nies:book} or \cite[Thm.\ 3.9.4]{Downey.Hirschfeldt:book}). 

We view $\alpha$ as a function of two variables, and  let

\[
\alpha_s^X (\w) = \sum_{n\in \w} \alpha_s^X(n) = \Omega_s^X;
\]
Let $\leb$ denote  Lebesgue measure and $c$ the  counting measure on~$\w$. By Fubini's Theorem
\[
I_s = \int \alpha_s(X,n) d(\leb\times c) = \int \alpha_s^X(\w) d\leb			
\]
  So $I_s \le 1$ for each $s$.


Let $\gamma_n = 2^{(p-1)n}$. The point is that $\sum \gamma_n$ is finite and computable, and $2^{-pn}\gamma_n = 2^{-n}$. 
We define $\nu(m)$ as a $\DII$ real uniformly in $m$. We view $m$ as a code for a pair of numbers. The algorithm for defining $\nu$ is as follows: 
\begin{quote} If $I_s\in (k\cdot 2^{-n}, (k+1)\cdot 2^{-n}]$, let $\nu_s(n,t) = \gamma_n$, and $\nu_s(n,t') = 0$ for all $t'\ne t$, where~$t$ is the least stage such that $I_t > k\cdot 2^{-n}$. \end{quote}
The total weight of $\nu$ is $\sum \gamma_n < \infty$. 

Now fix a constant~$d$ such that $d \cdot \alpha^Z \ge \nu $. We define a $p$-OW test that succeeds on all the oracles~$X$ such that $d\cdot \alpha^X\ge\nu$. 
If $I_s\in (k2^{-n},(k+1)2^{-n}]$ then the $k\tth$ version of $U_n$ at stage $s$ is the collection of oracles~$X$ such that $d\cdot \alpha_s^X(n,t)\ge \gamma_n$, where~$t$ is the least stage such that $I_t>k\cdot 2^{-n}$. 

The measure of each version of~$U_n$ is at most $d\cdot 2^{-n}/\gamma_n = d\cdot 2^{-pn}$. This is because by convention, for all~$X$, $\alpha^X_t(n,t)=0$; so if $I_s\in (k2^{-n},(k+1)2^{-n}]$ then $\int \alpha_s^X(n,t)\,d\leb \le I_s-I_t \le 2^{-n}$.

If $A$ is a $p$-base, then $A$ obeys $\cost_{\Omega, p}$, and hence $A \leT Z$ (\cref{prop:covering_p-OW-tests_by_p-Auckland_tests} and \cref{prop:weak_obedience_and_computing}).

\bigskip

\noindent (2) We modify the construction  for (1). We choose a non-decreasing computable function~$h\colon [0,1]\to \RR$ such that $h(x)\ge x$ and:
\begin{itemize}
	\item $\sum_n 2^{-n}/ h(2^{-n})<\infty$; 
	\item For all $p<1$, $h(x)\le^\times x^p$; and
	\item For all $M>0$, $h(Mx)\le^\times h(x)$. 
\end{itemize}
For example we can choose $h(x) = x(\log x)^2$. 

We carry out  the construction above with $\gamma_n = 2^{-n}/h(2^{-n})$. This tells us that every LR-hard set can be captured by an $h$-OW test, namely a test $(G_\s,\alpha)$ as in \cref{def:p-OW-tests} but such that $\leb(G_\s) \le^\times h(2^{-|\s|})$. We then follow the proof of \cref{prop:covering_p-OW-tests_by_p-Auckland_tests} to see that every such test can be covered by a $\cost_{\Omega,h}$-test, namely a test $\seq{V_n}$ such that $\leb(V_n)\le^\times h(\Omega-\Omega_n)$. So every set that obeys $\cost_{\Omega,h}$ is computable from all weakly LR-hard random sequences. Since $\cost_{\Omega,h}\le^\times \cost_{\Omega,p}$ for all $p<1$, \cref{prop:robust_ideal_no_cost} implies that there is a set that obeys $\cost_{\Omega,h}$ but is not in $\+{B}_{<1}$. 
\end{proof}



%
%

%

\end{document}